\theoremstyle{plain}
\newtheorem{lem}{Lemma}[section]
\newtheorem{prop}[lem]{Proposition}
\newtheorem{cor}[lem]{Corollary}
\newtheorem{thm}[lem]{Theorem}
\newtheorem{conj}[lem]{Conjecture}
\newtheorem*{thm*}{Theorem}
\newtheorem*{prop*}{Proposition}
\newtheorem*{cor*}{Corollary}
\newtheorem*{lem*}{Lemma}
\newtheorem*{thmt*}{Theorem (tame case)}
\newtheorem*{thmw*}{Theorem (wild case)}
\theoremstyle{definition} 
\newtheorem{ex}[lem]{Example}
\newtheorem*{nota}{Notation}
\newtheorem{rem}[lem]{Remark}
\newtheorem{defn}[lem]{Definition}
\DeclareMathOperator{\Aut}{\mathrm{Aut}}
\DeclareMathOperator{\Char}{\mathrm{char}}
\DeclareMathOperator{\Spec}{\mathrm{Spec}}
\DeclareMathOperator{\Gal}{\mathrm{Gal}}
\DeclareMathOperator{\Var}{\mathrm{Var}}
\DeclareMathOperator{\Codim}{\mathrm{codim}}
\DeclareMathOperator{\Dim}{\mathrm{dim}}
\DeclareMathOperator{\Sch}{\mathrm{Sch}}
\DeclareMathOperator{\Id}{\mathrm{id}}
\newcommand{\LL}{\ensuremath{\mathbb{L}}}
\newcommand{\A}{\ensuremath{\mathbb{A}}}
\title{The quotient map on the equivariant Grothendieck ring of varieties}
\author{Annabelle Hartmann}
\address{    Universität Bonn\\
   Endenicher Allee 60\\
D-53115 Bonn\\
    Tel.: 0049-228-7362348}
    \email{ahartman@math.uni-bonn.de}
\begin{document}
\parindent 0em

\begin{abstract}
For a scheme $S$
with a good action of a finite abelian group $G$
having enough roots of unity
we show that the quotient map on the $G$-equivariant Grothendieck ring of varieties over $S$
is well defined
with image in the Grothendieck ring of varieties over $S/G$ in the tame case,
and in the modified Grothendieck ring in the wild case.
To prove this we use a result on the class of the quotient of a
vector space by a quasi-linear action in the Grothendieck ring of varieties
due to Esnault and Viehweg,
which we also generalize to the case 
of wild actions.
As an application we deduce that the quotient of the motivic nearby fiber
is a well defined invariant.\\\\
%
\end{abstract}

\maketitle

\section{Introduction}

\noindent The Grothendieck ring $K_0(\Var_S)$ of varieties over a separated scheme $S$
is as group spanned by isomorphism classes $[X]$ of separated schemes $X$ of finite type over $S$
with relations allowing to cut and paste.
The ring structure is given by the fiber product.
This ring is useful because additive invariants of varieties,
for example the Euler characteristic 
and the number of points over a finite field in positive characteristic,
factor through this ring.
Therefore the Grothendieck ring of varieties and
localizations of it are used in motivic integration
as universal value rings.

Let $S$ now be a scheme with a good action of a finite group $G$,
and consider the category $(\Sch_{S,G})$ of separated $S$-schemes with good $G$-action.
A group action on $S$ is called \emph{good} if every orbit lies in an affine subscheme of $S$,
which insures that the quotient exists in the category of schemes,  see Section~\ref{assumtions}. 
The equivariant Grothendieck ring $K_0^G(\Var_S)$ of varieties over $S$, see Definition \ref{equivariant Gring},
is generated by isomorphism classes $[X]$ of objects $X$ in this category.
Whenever $Y$ is a $G$-invariant closed subscheme of $X$,
one asks the class of $X$ to be equal to the sum of the class of $Y$
and the class of $X\setminus Y$.
Moreover one asks the classes of two affine bundles with affine $G$-action to be equal if they have the same rank and the same base.
The ring structure is again given by the fiber product.

Using the
equivariant Grothendieck ring of varieties 
as value ring allows us to also encode some group action on a scheme $X$,
as done for example with the monodromy action on the motivic Zeta function, see \cite{DL3}.
To get a well defined theory of motivic integration with group actions,
one needs to make actions on affine bundles 'trivial'.
This is where the last relation in the definition of the equivariant Grothendieck ring of varieties actually comes from.

One can ask now how to relate the equivariant Grothendieck ring with the usual one.
A natural thing to do is to divide 
out the action,
i.e., to send the class $[X]\in K_0^G(\Var_S)$ of a scheme $X$ with good $G$-action
to the class of its quotient $X/G$ in $K_0(\Var_{S/G})$.
Bittner showed that such a quotient map
is well defined if $S$ is a variety over a field of characteristic zero and the action of $G$ on $S$ is free,
see \cite[Lemma~3.2]{MR2106970}.
The proof uses
that in this case also the action on an affine bundle in the category $(\Sch_{S,G})$ is free,
and thus the quotient 
is again an affine bundle.
For general $G$-action on $S$ this is not the case.

\medskip

In this paper,
we show that for an abelian group $G$, the quotient map on the $G$-equivariant Grothendieck ring is well defined in general
if we put some extra assumptions on the stabilizers of the points of $S$.
 For $s\in S/G$, denote by $F_s$ the residue field of $s$,
 and by $G_s\subset G$ the stabilizer of a point $s'\in S$ in the inverse image of $s$ under the quotient map $S\to S/G$.
 With this notation, we show the following theorem:
\begin{thm*}[Theorem \ref{quotientmap}]
 Let $G$ be a finite abelian group.
 Assume for all $s\in S/G$ that
 $F_s$  contains all $\lvert G_s \rvert$-th roots of unity.
 Then there is a well defined group homomorphism
  \[
 K^G_0(\Var_S)\to K_0^*(\Var_{S/G})
 \]
 sending $[X]\in K_0^G(\Var_S)$ to $[X/G]\in K_0(\Var_{S/G})$ for every $X\in (\Sch_{S,G})$.
\end{thm*}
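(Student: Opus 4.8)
The plan is to define the map on generators by $[X]\mapsto[X/G]$ — the quotient $X/G$ exists as a separated $S/G$-scheme of finite type precisely because the $G$-action is good — and to extend it $\mathbb{Z}$-linearly; the content is then to check that the two families of defining relations of $K_0^G(\Var_S)$ are sent to zero. (Only a group homomorphism can be expected, since $[\,\cdot/G\,]$ does not commute with fibre products.) The scissor relations are unproblematic: for a $G$-invariant closed subscheme $Y\subseteq X$ the formation of the quotient by a good action commutes with passage to $G$-stable locally closed subschemes, so $Y/G$ is closed in $X/G$ with open complement $(X\setminus Y)/G$, whence $[X/G]=[Y/G]+[(X\setminus Y)/G]$ in $K_0^*(\Var_{S/G})$.

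The heart of the matter is the relation identifying two affine bundles of equal rank over a common base, which I would deduce from the following lemma: \emph{for every affine bundle $p\colon W\to Z$ of rank $d$ carrying an affine $G$-action, $Z$ being equipped with the induced action, one has $[W/G]=\LL^{d}\,[Z/G]$ in $K_0^*(\Var_{S/G})$}. Granting this, two rank-$d$ affine bundles $V_1,V_2\to X$ with affine $G$-action over the same base $X$ satisfy $[V_1/G]=\LL^{d}[X/G]=[V_2/G]$, which is exactly well-definedness.

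To prove the lemma I would argue by Noetherian induction, using that the scissor relations are already known to be respected so that $Z$ may be replaced by the pieces of a finite $G$-stable stratification, and reduce to the case where $Z$ is integral and all of its points have one common stabilizer $H\subseteq G$. Since $p$ is equivariant, the stabilizer of a point of $W$ is contained in $H$, so $G/H$ acts freely on $Z$ and on $W/H$; as the quotient map in the free case is available (by the argument of Bittner — the quotient of an affine bundle by a free action is again an affine bundle — which requires no hypothesis on the characteristic), applying it to $W/H\to Z/H$ reduces the lemma to the $(G/H)$-equivariant identity $[W/H]=\LL^{d}[Z/H]$ over $Z/H$. Now $H$ fixes all points of $Z$, so the fibre of $W/H\to Z/H$ over a point $\bar z$ is the quotient of a $d$-dimensional affine space by an affine $H$-action — semilinear over the possibly nontrivial extension of residue fields, but defined over a field which, lying over some $s\in S/G$ with $\lvert H\rvert$ dividing $\lvert G_s\rvert$, contains all $\lvert H\rvert$-th roots of unity. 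By the theorem of Esnault and Viehweg on the class of the quotient of a vector space by a quasi-linear action — in its original, tame form, and in the generalization to wild actions proved in this paper, which is why the target is the modified ring — each such fibre has class $\LL^{d}$; combining a $(G/H)$-equivariant stratification of $Z/H$ over which $W/H$ becomes a fibrewise affine-space fibration with this computation gives $[W/H]=\LL^{d}[Z/H]$, hence the lemma.

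I expect two steps to be the real obstacles. The first is the reduction to constant stabilizer and the fact that there the stabilizer $H$ typically acts \emph{nontrivially on the residue fields} of $Z$; one must check that the roots-of-unity hypothesis is exactly what makes the quotient fibres compute correctly — the needed roots of unity already lie in the residue fields of $Z/H$, i.e. in the base over which the descended fibre lives, so that the Esnault–Viehweg result (combined with the triviality over a field of $\mathrm{Aff}_d$-forms) still yields $\LL^d$ — and that everything is compatible with the free $(G/H)$-action so that the final descent is legitimate. The second is the passage from the fibrewise identity to the identity in families: showing, by finitely many equivariant stratifications, that $W/H\to Z/H$ is piecewise an affine-space fibration. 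Throughout, the bookkeeping of the roots-of-unity hypothesis along the stratifications, and the verification that in the wild case every step already holds in $K_0^*$, is what requires care.
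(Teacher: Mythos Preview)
Your treatment of the scissor relation contains a genuine gap. You assert that ``the formation of the quotient by a good action commutes with passage to $G$-stable locally closed subschemes, so $Y/G$ is closed in $X/G$.'' This is true for \emph{open} $G$-stable subschemes (SGA1, Exp.~V, Cor.~1.4), but it is \emph{false} for closed ones. For a $G$-invariant closed $Y\subset X$ the induced map $i_G\colon Y/G\to X/G$ is finite with closed image $\pi(Y)$, but $i_G\colon Y/G\to \pi(Y)$ is in general not an isomorphism: it is only a universal homeomorphism (the residue-field extensions can be nontrivial purely inseparable). The paper's Example~\ref{exinsep3} exhibits a wild $\mathbb{Z}/p\mathbb{Z}$-action where $Y/G$ and $\pi(Y)$ are not even piecewise isomorphic---their generic fibres are two elliptic curves with distinct $j$-invariants---so one genuinely does not know whether $[Y/G]=[\pi(Y)]$ in the ordinary Grothendieck ring. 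This is precisely \emph{why} the target must be $K_0^{\text{mod}}$ in the wild case, and it is established in the paper as Lemma~\ref{closed mod} (universal homeomorphism in general) and Lemma~\ref{rem tame decomposition} (piecewise isomorphism when the action is tame). Your argument needs this step, and you also rely on it a second time when you stratify the base $Z$ by stabilizer type and ``replace $Z$ by the pieces'': each piece is locally closed, and decomposing $[W/G]$ along such a stratification again invokes exactly this compatibility.

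Your approach to the affine-bundle relation is broadly in the right spirit and close to the paper's Lemma~\ref{lemma}, though organized differently. The paper does not perform a two-step quotient (first by the common stabilizer $H$, then by the free $G/H$-action via Bittner); instead it computes directly, for each point $x\in B/G$ with preimage $b\in B$ and stabilizer $G_b$, that $[\varphi_G^{-1}(x)]=[\varphi^{-1}(b)/G_b]$ in $K_0^{\text{mod}}(\Var_{\kappa(x)})$ (this is Lemma~\ref{lemma vezel}, which already absorbs the residue-field subtlety via a universal homeomorphism), then applies the Esnault--Viehweg computation to get $\LL^d$, and finally passes from fibres to families by \emph{spreading out} (Lemma~\ref{spreading out}) and Noetherian induction. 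Your stratify-to-constant-stabilizer-then-descend strategy could be made to work, but you should be aware that the step ``the fibre of $W/H\to Z/H$ over $\bar z$ is $\varphi^{-1}(z)/H$'' again hides a universal-homeomorphism issue (since $H$ may act nontrivially on $\kappa(z)$, so $Z/H$ is not literally $Z$), and that the passage from fibrewise to global is done in the paper not by exhibiting a piecewise $\A^d$-fibration but by the limit argument on Grothendieck rings.
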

\noindent
Here $K_0^*(\Var_{S/G})=K_0(\Var_{S/G})$, if the $G$-actions on $S$ is \emph{tame}, 
i.e., if the characteristic of the residue field of every point in $S$ is prime to the order of its stabilizer, see Section \ref{assumtions}.
If the action of $G$ on $S$ is \emph{wild}, i.e., not tame,
$K_0^{*}(\Var_{S/G})$ is equal to the modified Grothendieck ring $K_0^{\text{mod}}(\Var_{S/G})$,
in which classes of varieties connected by universal homeomorphisms are equal, see Definition \ref{dfn modified}.
This is due to the fact that
if $G$ acts wildly on a scheme $X$
the quotient of a closed invariant subscheme of $X$ only has a universal 
homeomorphism onto its image in the quotient $X/G$,
which is in general not a piecewise
isomorphism on the underlying reduced schemes, see Example \ref{exinsep3}.
It is not known whether two such schemes have the same class in the usual Grothendieck ring of varieties.

\noindent
In order to prove that the quotient map is well defined,
we need to control in particular quotients of affine bundles by affine actions.
To do so, we show that the class of the quotient $V/G$ of an affine bundle $\varphi:V\to B$
in $K_0^*(\Var_{S/G})$
only depends on the rank $d$ of the bundle and its base $B$, see Lemma~\ref{lemma}.
We prove the lemma by showing that all fibers of the induced map $\varphi_G:V/G\to B/G$ 
have the class of an affine space of dimension $d$
in the Grothendieck ring of varieties,
and conclude then by spreading out.
To compute the fibers of $\varphi_G$
we use the following proposition:
\begin{prop*}[Proposition \ref{lemwildandtame}]
  Let $G$ be a finite abelian group with quotient $G\to \Gamma$.
  Let $k$ be a field of characteristic $p>0$,
  let $q$ be the greatest divisor of $\lvert G\rvert$ prime to $p$, 
  and let $K/k$ be a Galois extension with Galois group $\Gamma$.
  Assume that the Galois action on $K$ lifts to a $k$-linear action of $G$
  on a finite dimensional $K$-vector space $V$.
  If $k$ contains all $q$-th roots of unity, then
  \[
   [V/G]=\mathbb{L}_k^{\Dim_KV}\in K_0^{*}(Var_k)
  \]
  with $\mathbb{L}_k:=[\mathbb{A}_k^1]\in K_0^{*}(Var_k)$.
\end{prop*}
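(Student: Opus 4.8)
The plan is to argue by induction on $r:=\dim_K V$, combining a dévissage of the $G$-action with Galois descent. Write $N:=\Ker(G\to\Gamma)$, so that $N$ acts $K$-linearly on $V$ while $\Gamma=G/N$ acts $K$-semilinearly lifting the Galois action on $K$; we may assume $N$ acts faithfully on $V$, since the subgroup of $G$ acting trivially on $V$ already lies in $N$ and dividing it out alters neither $V/G$ nor the surjection onto $\Gamma$. Because $q$ is prime to $p$ and $\mu_q\subseteq k=K^{\Gamma}$, the prime-to-$p$ part $N_{p'}$ of $N$ acts diagonalizably with eigenvalues in $\mu_q\subseteq k$, so — $G$ being abelian — its eigenspaces are $G$-stable $K$-subspaces, on each of which the $p$-group $N_p$ acts unipotently. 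The base cases $r\le 1$ are elementary: geometrically $V/G$ is a quotient of $\mathbb{A}^1$ by a finite group of automorphisms, which fixes the point at infinity of the compactification and hence has quotient again an affine line, so $V/G$ is a $k$-form of $\mathbb{A}^1$ and therefore $\cong\mathbb{A}^1_k$, giving $[V/G]=\mathbb{L}_k$. If $p\nmid|G|$ the action is tame, $K_0^*=K_0$, and the assertion is (up to the Galois-descent bookkeeping below) the theorem of Esnault and Viehweg recalled in the introduction; so I assume henceforth that the action is wild and work in $K_0^{\mathrm{mod}}$, where universally homeomorphic varieties have equal class.

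For the inductive step with $r\ge 2$ I would first exhibit a proper nonzero $G$-stable $K$-subspace $W\subseteq V$. If $N_{p'}$ has more than one eigen-character on $V$, take one of its eigenspaces; if $N_p\ne 1$, take $V^{N_p}$ (nonzero and proper since $N_p$ acts unipotently and faithfully); and if $N$ acts through a single character $\chi\colon N\to\mu_q(k)$, extend $\chi$ to a character $\tilde\chi\colon G\to\mu_q\subseteq k^{\times}$ — possible because $\mu_p(\bar k)=1$ forces any character of $G$ to kill its $p$-Sylow — twist the $G$-action on $V$ by $\tilde\chi^{-1}$ so that $N$ acts trivially and $\Gamma$ acts $K$-semilinearly, and apply Galois descent for vector spaces (Speiser's lemma): then $V\cong V_0\otimes_k K$ for a $k$-vector space $V_0$ with $\dim_k V_0=r$, and $W_0\otimes_k K$ is $G$-stable for any proper nonzero $W_0\subsetneq V_0$ (twisting by $\tilde\chi$ does not change the stable subspaces). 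In all cases $\dim_K(V/W)<r$, and I study the morphism $V/G\to(V/W)/G$ induced by the $G$-equivariant projection $V\to V/W$: its fibre over a point $\bar y$ with preimage $y\in V/W$ and stabilizer $G_y$ is the quotient by the affine-semilinear action of $G_y$ of the affine space of scheme-dimension $\dim_K W$ sitting over $y$ in $V$, up to a universal homeomorphism onto its image in $V/G$ — precisely the wild phenomenon of Example~\ref{exinsep3}, and the reason the whole computation is confined to $K_0^{\mathrm{mod}}$. By the induction hypothesis (after the straightforward extension of the statement to affine-semilinear actions, applied over the residue field $\kappa(\bar y)$) this fibre has class $\mathbb{L}^{\dim_K W}$; spreading out then gives $[V/G]=\mathbb{L}^{\dim_K W}[(V/W)/G]$, and one more application of the induction hypothesis to $V/W$ yields $[V/G]=\mathbb{L}^{\dim_K W}\mathbb{L}^{\,r-\dim_K W}=\mathbb{L}_k^{r}$.

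The hard part is keeping the wild case honest. There, forming invariants does not commute with restriction to closed invariant subschemes, so the quotient of each stratum is only universally homeomorphic to its image in $V/G$; every cut-and-paste and spreading-out identity above therefore holds a priori only in $K_0^{\mathrm{mod}}$, and one must check both that this does no damage and that in the tame case all the maps in sight are piecewise isomorphisms, so that the computation runs in $K_0$ and the sharper statement follows there. Secondary technical points to nail down are the reduction of affine-semilinear quotients to the linear case (needed to feed the induction over the varying, possibly imperfect residue fields), the precise form of Speiser's lemma and Hilbert 90 over such fields, and the fact — used in an essential way through $\mu_q\subseteq k$ — that $N_{p'}$ is diagonalizable over $K$ and that the character $\tilde\chi$ can be realized with values in $k^{\times}$.
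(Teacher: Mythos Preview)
Your overall architecture --- induction on $d=\dim_K V$, a $G$-equivariant fibration to a quotient of smaller dimension, computing the classes of the fibres via the stabilizer action (your analogue of the paper's Lemma~\ref{lemma vezel}), and spreading out in $K_0^{\mathrm{mod}}$ --- is exactly the paper's strategy. The inductive step is organized somewhat differently: the paper first produces an explicit $K$-basis in which the $p$-part of $G$ is upper-triangular and the prime-to-$p$ part is diagonal, and always projects to the $1$-dimensional quotient $\Spec K[x_1]$; you instead locate \emph{some} proper $G$-stable $K$-subspace via the eigenspace/fixed-point/Speiser trichotomy. Both routes work, and both force one to strengthen the induction hypothesis to affine-semilinear actions, which the paper does from the outset (its Equation~(\ref{translation})) rather than as an afterthought.

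The genuine gap is your base case $d=1$. The sentence ``$V/G$ is a $k$-form of $\mathbb{A}^1$ and therefore $\cong\mathbb{A}^1_k$'' is false as stated: in positive characteristic over an imperfect field there exist nontrivial forms of $\mathbb{A}^1$ (Russell's forms of $\mathbb{G}_a$, e.g.\ $y^p=x+tx^p$ over $\mathbb{F}_p(t)$), so ``form of $\mathbb{A}^1$'' does not by itself yield an isomorphism, nor even equality of classes in $K_0^{\mathrm{mod}}$. Your compactification idea can be rescued --- base-change to $k^s$ (quotient commutes with this flat base change, and $K/k$ is separable so $V_{k^s}$ really is a disjoint union of copies of $\mathbb{A}^1_{k^s}$), apply L\"uroth together with the observation that the image of $\infty$ is $k^s$-rational to get $(V/G)_{k^s}\cong\mathbb{A}^1_{k^s}$, and then argue that \emph{separable} forms of $\mathbb{A}^1$ are trivial (the boundary point of the compactification descends to a $k$-rational point on a Severi--Brauer curve) --- but this is real content you have not supplied. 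The paper takes a completely different route for $d=1$: it proves $K[x_1]^G\cong k[y]$ directly by a second induction on the $p$-part of $\lvert G\rvert$, peeling off one $\mathbb{Z}/p\mathbb{Z}$-factor at a time with explicit Artin--Schreier manipulations, and then carefully verifying that the remaining generators still act on the new coordinate in the prescribed upper-triangular/diagonal shape so the induction can continue. That verification is the technical heart of the proof and occupies roughly a page; it is not elementary in the way your one-line argument suggests.
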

\noindent
This proposition was already shown in the tame case in
\cite[Lemma~1.1]{MR2642161}
by decomposing $V$ into eigenspaces.
This method does not work in the case of wild actions.
Instead we construct a $G$-equivariant map from $V$ to a vector space $W$ of dimension one over $K$,
use an induction argument to compute the fibers of the induce map between the quotients,
and use again spreading out to conclude.


\medskip

As an application of our main theorem, 
we get that 
the quotient of the motivic nearby fiber is a well defined invariant
with values in $\mathcal{M}_k$, the localization of $K_0(\Var_k)$ with respect to $\mathbb{L}:=[\mathbb{A}_k^1]$,
with $k$ a field of characteristic zero containing all roots of unity.
The motivic nearby fiber, see Definition \ref{dfn mnf},
is an invariant of  a non-constant 
morphism $f:X\to \mathbb{A}_k^1$,
with $X$ an irreducible smooth $k$-variety,
and was constructed in \cite{DL3}
as a limit of the motivic Zeta function.

We show moreover that modulo $\mathbb{L}$, the quotient of the motivic nearby fiber is equal to the motivic reduction $R(f)$ of $f$
in the image of $K_{0}(\Var_k)$ in $\mathcal{M}_{k}$,
see Proposition~\ref{application}.
The \emph{motivic reduction} of $f$, see Definition \ref{dfn mr}, 
is defined as the class of $h^ {-1}(X_0)$ in $K_0(\Var_{k})$ modulo $\mathbb{L}$,
where $h:Y\to X$ is any smooth modification of $f$, i.e., $Y$ is a smooth $k$-variety and $h$ is a proper morphism
inducing an isomorphism $Y\setminus h^ {-1}(X_0)\to X\setminus X_0$.
The definition of $R(f)$ does not depend on the choice of such an $h$ due to weak factorization, see Proposition \ref{welldef R(f)}.

From this result we deduce that,
if $X$ is a smooth variety with a proper, non-constant morphism $f:X\to \mathbb{A}_k^1$,
and the generic fiber $X_\eta:=X\times_{\A^1_k}\A^1_k\setminus \{0\}=X\setminus X_0$ of $f$ is equal to $1$ modulo $\mathbb{L}$
in $K_0(\Var_{\A^1_k\setminus \{0\}})$,
then the same holds for the special fiber $X_0$ of $f$ in the image of $K_0(\Var_k)$ in $\mathcal{M}_k$.
This can be seen as a motivic analog of the main theorem in \cite[Theorem~1.1]{MR2247971},
which says the following:
if $V$ is an absolutely irreducible smooth projective variety
over a local field $K$ with finite residue field $F$
which has a certain cohomological property,
namely that the \'etale cohomology of $V\times_K \bar{K}$ has coniveau~$1$,
then the amount of points of the special fiber of every projective regular model of $V$ is equal to $1$ modulo $\lvert F \rvert$.
We will explain this analogy in more details in Section~\ref{an application}.

\section{Preliminaries}
\label{assumtions}

\noindent
Fix a finite group $G$. 
Let $S$ be a separated scheme endowed with
a left action of $G$. 
If not mentioned otherwise, all group actions will be left actions.
We say the action of $G$ on $S$ is \emph{good} if every
orbit of this action is contained in an affine open subscheme of $S$.
By requiring the action to be good, one makes sure that the quotient exists in the category of schemes, see \cite[Expos\'e V.1]{MR0238860}.
We call an action \emph{tame} if the characteristic
of the residue field of every point $s\in S$
is zero or positive and prime to the order of the stabilizer $G_s\subset G$ of $s$. 
We call an action \emph{wild} if it is not tame.

If not mentioned otherwise,
we assume for the rest of the text that 
$S$ is a separated scheme with a good $G$-action,
that the quotient $S/G$ is locally Noetherian and separated, and that the quotient map $S\to S/G$ is finite.
This is for example true if $S$ is a separated scheme of finite type over a field $k$,
and $G$ acts on $S$ by a group of $k$-morphisms, see \cite[Expos\'e V.1, Corollaire 1.5]{MR0238860}.

We denote by $(\Sch_{S,G})$ the category whose objects are separated
schemes of finite type over $S$ with a good $G$-action such that the structure map is $G$-equivariant, and whose morphisms are
$G$-equivariant morphisms of $S$-schemes.
Note that if $G$ acts tamely on $S$, the same is true for every $X\in (\Sch_{S,G})$,
because the stabilizer of a point $x\in X$ is 
a subset of the stabilizer of the the image of $x$ in $S$.

One can check that the fiber product exists in this category:
take any $X,Y$ in $(\Sch_{S,G})$. 
For $g\in G$ let $g_X\in \Aut(X)$ and $g_Y\in \Aut(Y)$ be the corresponding automorphisms.
Then $g_X \otimes g_Y$ is an automorphism of $X\times_SY$.
Doing the same for every $g\in G$ we get an action of $G$ on $X\times_SY$ with $G$-equivariant projection maps.
This action is good, because the fiber product is constructed using affine covers.
It is easy to see that $X\times_SY$ together with the projection maps to $X$ and $Y$ is in fact the categorical fiber product in $(\Sch_{S,G})$.

\section{Equivariant affine bundles}
\label{equvariant affine bundles}

\begin{defn}\label{affine bundle}
Let $B$ be an $S$-scheme.
An \emph{affine bundle over $B$ of rank $d$} is a $B$-scheme $V$
with a vector bundle $E\to B$ of rank $d$ and a $B$-morphism $\varphi: E\times_BV\to V$ such that 
$\varphi\times p_V: E\times _BV \to V\times_B V$, where $p_V$ denotes the projection to $V$, is an isomorphism of $B$-schemes.
We call $E$ the \emph{translation space} of $V$.

An affine bundle $V$ over $B$ is called \emph{$G$-equivariant}, if $V$ and $B$ are in $(\Sch_{S,G})$, and $V\to B$ is $G$-equivariant.
The $G$-action on $V\to B$ is called \emph{affine} if there is a $G$-action on $E$, linear over the action on $B$,
such that $\varphi$ is $G$-equivariant.
An action on $E$ is \emph{linear over the action on $B$}
if for all $g\in G$ the
map $g':E \to g_B^*E$ induced by the following Cartesian diagram
\begin{equation*}\label{diagram1}
  \xymatrix{
 E\ar@/^/[drr]^{g_E}\ar@/_/[ddr]\ar@{-->}[rd]^{g'}\\
 &g_B^*(E)\ar[r]\ar[d]& E\ar[d]\\
 & B\ar[r]^{g_B}& B
 }
\end{equation*}
 is a morphism of vector bundles.
 Here $g_B\in \Aut(B)$ and $g_E\in \Aut(E)$ are the automorphisms of $B$ and $E$ induced by $g$,
 and $g_B^*E:=E\times_B B$, where $B$ is a $B$-scheme via $g_B$.
\end{defn}

\begin{ex}\label{exe=v}
 Let $E$ be a vector bundle over some $B\in (\Sch_{S,G})$ with an action on $G$ which is linear over the action on the base $B$,
 then $E$ can also be viewed as a $G$-equivariant affine bundle with affine $G$-action.
 We call the $G$-action on $E$ \emph{quasi-linear}.
\end{ex}

\begin{ex}\label{exwildtranslation}
Let $k$ be a field of characteristic $p>0$, and let $G=\mathbb{Z}/p\mathbb{Z}$.
Let $B=\Spec(k)$ with trivial $G$-action, and consider $V= \Spec(k[x])$ with the $G$-action given by sending $x$ to $x+1$.
As this action has no fixed point, there is no way of changing coordinates to achieve that this action is linear.
Hence in particular $V$ is not isomorphic to a $k$-vector space with linear action.

Let $E=\Spec(k[y])$ be the trivial vector bundle of dimension $1$ over $B$
with trivial action of $G$.
Consider the map
 given by sending $(e,v)\in E\times_BV$ to $e+v\in V$.
 As $(e,v+1)$ is mapped to $e+v+1$ this map is clearly $G$-equivariant.
One can check that it induces an isomorphism $V\times_BE\to V\times_B V$.
So $V\to B$ is a $G$-equivariant affine bundle
with affine $G$-action, because the action on $E$ is trivial.
 \end{ex}
 
 \begin{rem}\label{trivialtorsor}
Definition \ref{affine bundle} implies that a $G$-equivariant affine bundle $V\to B$ of rank $d$ is in particular a \emph{principal homogenous space} or \emph{torsor}.
 Hence \cite[Proposition 4.1]{MR559531} implies that it is locally in the \'etale topology a \emph{trivial torsor},
 i.e., there is a cover $\{U_i\}_{i\in I}$ of $B$ in the \'etale topology,
 such that $V_{U_i}:=V\times_BU_i\cong E_{U_i}:=E\times_B U_i$, and $E_{U_i}\cong \mathbb{A}^d_{U_i}$ acts on $V_{U_i}$ by translation.
  
  By \cite[Propostion 14]{Serre} the algebraic group $\mathbb{G}_a^d$ is special.
  This means by definition, see \cite[4.1]{Serre}, that a locally trivial $\mathbb{G}_a^d$-torsor in the \'etale topology is already locally trivial in the Zariski topology.
  But if we restrict $B$ to an open over which $E$ is trivial, $V$ is a locally trivial $\mathbb{G}_a^d$-torsor in the \'etale topology.
  Hence we may assume that $\{U_i\}_{i\in I}$ is a cover of $B$ in the Zariski topology. 
 \end{rem}

\begin{rem}\label{Vb} 
Let $B\in (\Sch_{S,G})$, and let $E$ be a vector bundle of rank $d$ with a $G$-action which is linear over that on $B$.
Let $b\in B$ be a \emph{fixed point}, i.e., the orbit of $b$ under the action of $G$ on $B$ contains only $b$,
and let $K$ be its residue field.
Then $E_b:=E\times_Bb\cong\Spec(K[x_1,\dots,x_d])$, and the $G$-action on $E$ restricts to $E_b$.
Take any $g\in G$, and let $\alpha\in \Aut(K[x_1,\dots,x_d])$ be the corresponding automorphism of rings.
Then we have the following commutative diagram.

\begin{equation*}\label{diagram affine}
  \xymatrix{
 K[x_1,\dots,x_d]\\
 &K\otimes_KK[x_1,\dots,x_d]\ar@{-->}[lu]^{\alpha'}& K[x_1,\dots,x_d]\ar[l]\ar@/_/[ull]^{\alpha}\\
 & K\ar[u]\ar@/^/[uul]& K\ar[l]^{\alpha\rvert_K}\ar[u]
 }
\end{equation*}
Note that $K\otimes_KK[x_1,\dots,x_d] \cong K[x_1,\dots,x_d]$, but the $K$-structure on the first
is given by sending $s\in K$ to $\alpha^{-1}(s)$.
By the definition $\alpha'$ is $K$-linear.
Hence we have
\begin{equation}\label{linear}
\alpha(x_i)=\alpha'(x_i)=\sum_{j=1}^d a_{ij}x_j
\end{equation}
for some $a_{ij}\in K$.
Using that $\alpha$ is a ring morphism,
we get that
\begin{equation}\label{qlin}
 \alpha(v+sw)=\alpha(v)+\alpha\rvert_K(s)\alpha(w)
\end{equation}
for all $v,w\in K[x_1,\dots,x_d]$ and $s\in K\subset K[x_1,\dots,x_d]$.
If $s\in k:=K^G$,
then $\alpha(v+sw)=\alpha(v)+s\alpha(w)$,
because $\alpha\rvert _k=\Id$ by definition.
Note that $K$ is a Galois extension of $k$,
and we have a surjective map 
\[
G\to \Gal(K,k)=:\Gamma.
\]
So $K$ is a $k$-vector space of dimension $r:=\lvert\Gamma\rvert$.
Now we can view $E_b$ as a $K$-vector space of dimension $d$, and
hence also as a $k$-vector space of dimension $rd$.
We have seen that the $G$-action on $E_b$ defines a $k$-linear action on $E_b$
which lifts the Galois action of $\Gamma$ on $K$.
This follows from Equation (\ref{linear}) and Equation (\ref{qlin}).
Hence in particular the $G$-action on $E_b$ is quasi-linear.
\end{rem}

\begin{rem}\label{Vb translation}
Let $V\to B$ be a  $G$-equivariant affine bundle of rank $d$ with affine $G$-action with translation space $E\to B$,
and let $b\in B$ be a fixed point.
 Remark~\ref{trivialtorsor}
 implies that $\varphi_b: E_b\times V_b \to V_b$, with $E_b=E\times_B b$ and $V_b=V\times_B b$,
 is the trivial torsor,
 hence $V_b\cong E_b \cong \mathbb{A}^d_K$,
 and $\varphi_b$ sends $(v,w)\in E_b\times V_b$ to $v+w\in V_b$.
 
 As $b$ is fixed under the action of $G$, the $G$-action on $E$ and $V$ restrict to $E_b$ and $V_b$.
 Moreover $\varphi_b$ is $G$-equivariant.
 Take any $g\in G$, and let $g_E\in \Aut(E_b)$ and $g_V\in \Aut(V_b)$ be the corresponding automorphisms.
 Fix a $0\in V_b$.
For all $v\in V_b$ we have that
 \begin{equation}\label{formel translation}
   g_V(v)=g_V(v+0)=g_V(\varphi_b ( v,0 ))=\varphi_b (g_E(v), g_V(0))=g_E(v)+g_V(0).
 \end{equation}
Note that Remark \ref{Vb} implies that
$g_E$ is quasi-linear.
Moreover $g_V(0)$ does only depend on $g$ and the choice of $0$, but not on $v$.
\end{rem}

\begin{rem} \label{tame translation}
 Assumption and notation as in Remark \ref{Vb translation}.
Let $H\subset G$ 
be the subgroup consisting of all elements of order prime to the characteristic of $K$.
Assume that $H$ is abelian.

View $V_b$ as a vector space over $k=K^G$, and
consider the action of $H$ on $V_b$.
By Remark~\ref{Vb translation} we know that
for every $h\in H$ the corresponding automorphism sends $v\in V_b$ to $A_h(v)+b_h$,
where $A_h$ is a $k$-linear map and $b_h\in V_b$.
We are now going to show that the action of $H$ on $V_b$ has a fixed point.
Therefore we view $V_b$ as a scheme over $k$,
hence $V_b\cong \mathbb{A}^{rd}_k=\Spec(k[x_1,\dots,x_{rd}])$,
and show by induction on $n:=\lvert H\rvert$ that
the fixed point locus $V_b^H\subset V_b$ is isomorphic to $\mathbb{A}_k^N$ for some $N\geq 0$.
For $n=1$, the statement is trivial.

So let $n>1$.
Then there exists a nontrivial cyclic $q$-subgroup $H'$ of $H$ for some prime $q$, prime to the characteristic of $k$.
Consider the induced action of $H'$ on $V_b$.
As $q\neq p$, we can use \cite[Corollary 5.5]{1009.1281}, which follows from a theorem of Serre in \cite{MR2555994},
to get that $V_b^{H'}(\bar{k})\neq \emptyset$.
Here $\bar{k}$ is the algebraic closure of $k$.
In particular $V_b^{H'}$ is not the empty scheme.
Let $h\in H$ be a generator of $H'$.
Then the corresponding automorphism of $k[x_1,\dots,x_{rd}]$ sends $x_i$
to $\sum h_{ij}x_j +h_i$ for some $h_{ij},h_i\in k$.
Hence $V_b^{H'}\subset V_b$ is given by equation of the form
$h_{ij}x_j + h_i-x_i$, hence $V_b^{H'}$ is a nonempty linear subspace of $V_b$,
so in particular isomorphic to $\mathbb{A}_k^N$ for some $N\geq 0$.

As $H$ is abelian, it maps every point fixed by $H'$
to a point fixed by  $H'$.
Hence $V_b^{H'}$ is $H$-invariant.
As $H'$ acts trivially on $V_b^{H'}$,
we get in fact an action of $H/H'$ on $V_b^{H'}$.
This action is still given by some $k$-linear maps composed with some translation.
As the order of $H/H'$ is smaller than $n$,
we can now use the induction assumption to get that
$V_b^H=(V_b^{H'})^{H/H'}\cong \mathbb{A}_k^N$ for some $N\geq 0$.
In particular $V_b^H$ has a point $v_0$ over $k$.

Let $g\in H\subset G$, and let $g_V$ be the corresponding automorphism of $V_b$.
Then $g_V(v_0)=v_0$.
If we now chose $0$ in Remark~\ref{Vb translation} to be $v_0$,
we get from Equation (\ref{formel translation}) that for all $g\in H\subset G$ we have
\begin{equation*}\label{eq tame translation}
 g_V(v)=g_E(v)+g_V(0)=g_E(v)+0=g_E(v)
\end{equation*}
for all $v\in V_b$.
\end{rem}

\noindent
Note that we can also find a fixed point in Remark \ref{tame translation} using elementary calculations
instead of \cite[Corollary 5.5]{1009.1281}.
In both cases
we need to assume that $H\subset G$ is an abelian subgroup.
Moreover it is crucial that
the order of $H$ is prime to the characteristic of $K$.
In case of wild actions there exist $G$-equivariant affine bundles with affine $G$-action
such that there is no change of coordinates making the action quasi-linear, even if $G$ is cyclic.
Such an $G$-equivariant affine bundle is given in Example~\ref{exwildtranslation}.

\section{The equivariant Grothendieck ring of varieties}
\label{equivariant Grothendieck ring}

\begin{defn}\label{equivariant Gring}
The \emph{equivariant Grothendieck ring of $S$-varieties}
$K_0^G(\Var_S)$ is defined as follows: as an abelian group, it is
generated by isomorphism classes $[X]$ of elements $X\in(\Sch_{S,G})$. These generators are subject to the following
relations:

\begin{enumerate}
\item $[X]=[Y]+[X\setminus Y]$, whenever $Y$ is a closed $G$-equivariant 
sub scheme of $X$ (scissors relation).
\item $[V]=[W]$, whenever $B\in (\Sch_{S,G})$, and
$V\rightarrow B$ and $W\to B$ are two
 $G$-equivariant affine bundles of rank $d$ over $B$ with affine $G$-action, see Definition \ref{affine bundle}.
\end{enumerate}
For all $X,Y\in (\Sch_{S,G})$, set 
 $[X]\cdot[Y]:=[X\times_S Y]$, where
 the fiber product is taken in $(\Sch_{S,G})$.
 This product extends bilinearly to
 $K_0^G(\Var_S)$ and makes it into a ring.
 
We denote by $\LL_S$
 the class of the affine line $\A^1_S$ with $G$-action induced by the action on $S$ as above.
  If the base scheme $S$ is clear from the
 context, we write $\LL$ instead of $\LL_S$.
 We define $\mathcal{M}^G_S$ as the localization
 $K_0^G(\Var_S)[\LL_S^{-1}]$.
  \end{defn}
 
 \begin{nota}
  If $G$ is the trivial group $\{e\}$, we write
 $K_0(\Var_S)$ and $\mathcal{M}_S$ instead of $K_0^G(\Var_S)$ and
 $\mathcal{M}^G_S$, receptively.
 Note that in this case Relation (2) becomes trivial.
 If $S=\Spec(A)$,
 we write $K_0^G(\Var_A)$ for $K_0^G(\Var_S)$, $\mathbb{L}_A$ for $\mathbb{L}_S$, and $\mathcal{M}_A^G$ for $\mathcal{M}_S^G$.
 \end{nota}

 \begin{rem} \label{rem 0div neg}
In \cite{lzero} it was shown that $\mathbb{L}_{\mathbb{C}}$ is a zero divisor in $K_0(\Var_{\mathbb{C}})$.
 This means in particular the canonical map $K_0^G(\Var_{S})\to \mathcal{M}_S^G$ is not an injective map in general.
 \end{rem}

\begin{rem}\label{remark}
A morphism of finite groups $G'\rightarrow G$ induces forgetful
ring morphisms
\[
K^{G}_0(\Var_S)\rightarrow K^{G'}_0(\Var_{S})\mbox{
and }
 \mathcal{M}^{G}_S\rightarrow \mathcal{M}^{G'}_{S}.
 \]
 If
 $G'\rightarrow G$ is surjective, then these morphisms are
 injections.
\end{rem}

\begin{defn}
 Let $S$ be a separated scheme with an action of a profinite group
 \[
 \widehat{G}=\lim_{\stackrel{\longleftarrow}{i\in I}} G_i
 \]
 factorizing through a good action of some finite quotient $G_i$.
 Then we define
 $$K_0^{\widehat{G}}(\Var_S):=\lim_{\stackrel{\longrightarrow}{i\in
 I}}K_0^{G_i}(\Var_S)\ \mathrm{and}\ \mathcal{M}^{\widehat{G}}_{S}:= \lim_{\stackrel{\longrightarrow}{i\in
 I}}\mathcal{M}^{G_i}_{S}.$$
 \end{defn}

 \medskip
 \noindent
 Note that in the literature on can find several different definitions of the equivariant Grothendieck ring of varieties.
 This difference always lies in relation~(2),
 which is needed to compute formulas in motivic integration.
 Our definition can be found for example in \cite[Section 3.4]{MR2483954}.
 We are now going to discuss two alternative definitions:
 
 \begin{rem}
 In \cite[2.9]{DLLefschetz} and \cite[2.4]{DL3}, instead of relation~(2) one divides out the following relation:
   \begin{itemize}
  \item[(2a)]
  $[V]=[W]$,
  where both $V$ and $W$ are affine spaces of degree $r$ over $B$ in $(\Sch_{S,G})$
  with good $G$-actions lifting the $G$-action on $B$.
 \end{itemize}
 The problem here is that it is very hard to say how these lifts look like. For example it is not even known
 whether or not all actions of a finite cyclic group on $\mathbb{A}_\mathbb{C}^3$ are linearizable, i.e., whether on can find coordinates for which the action becomes linear,
 see \cite[Section 6]{MR1423629}.
 In later definitions, there is always a restriction on the actions which one wants to consider.
\end{rem}
 
 \begin{rem}
 In \cite[Section 2.2]{MR2106970} one divides out the following relation instead of relation~(2):
 \begin{itemize}
  \item[(2b)] $[G\circlearrowright\mathbb{P}(V)]=[\mathbb{P}^n\times(G\circlearrowright B)]$, whenever
  $B\in (\Sch_{S,G})$ and $V\to B$
  is a vector bundle of rank $d+1$ with a $G$-action on $V$ which is linear over the action on $B$.
  Here $G\circlearrowright\mathbb{P}(V)$ denotes the projectivization of this action, whereas $\mathbb{P}^n\times(G\circlearrowright B)$
  denotes the action on $V$ on the right vector only.
  \end{itemize}
 Bittner uses this formulation, because she is working with projective varieties as generators for the Grothendieck ring,
see for example \cite[Corollary 3.6]{MR2106970}.
In this context it is of course important to work with projective varieties as generators for the relations. 
  
As already remarked in \cite{MR2106970},
relation~(2b) implies in particular that the class of two affine bundles of rank $d$ over $B$ with an affine action over the action on $B$
have the same class.

On the other hand,
let $V$ be a vector space of dimension $d+1$ over a field $k$ containing all $\lvert G \rvert$-th roots of unity and let $G$
be a finite abelian group acting linearly on $V$. These assumptions imply that
we can find a common eigenvector for the linear maps on $V$,
hence there are coordinates such that the induced action on $\mathbb{P}(V)$ sends
\[
 [x_0:\dots :x_d]\mapsto [x_0:\sum_{i=0}^da_{1i}x_i:\dots:\sum_{i=0}^da_{di}x_i]
\]
for all $g\in G$.
Hence we can decompose $\mathbb{P}(V)$ in the $G$-invariant subschemes given by $x_0=0$ and $x_0\neq 0$.
The first is of the the form $\mathbb{P}(V')$, where $V'$ is a $k$-vector space of dimension $d$ with a linear action and
the action on $\mathbb{P}(V')$ is induced by this action.
The second is isomorphic to $\mathbb{A}_k^d=\Spec(k[x_1,\dots,x_d])$
with affine $G$-action sending for every $g\in G$
\[
 (x_1,\dots ,x_d)\mapsto (\sum_{i=1}^da_{1i}x_i+a_{10},\dots,\sum_{i=1}^da_{di}x_i+a_{d0}).
\]
Using an inductive argument, we can decompose $\mathbb{P}(V)$ into $k$-vector spaces with affine $G$-action.
Analogously we can decompose $\mathbb{P}(V)$ if $V$ is a vector space over $K$ with a quasi-linear action of $G$ over a $G$-action on $K$.
Therefore we need to assume that $k:=K^G$ contains all $\lvert G\rvert$-th roots of unity.
With this decomposition one can use Proposition \ref{lemeh} to show as in Lemma \ref{lemma} (with the same assumptions on $S$ and $G$ as there)
that the class of the quotient of any $\mathbb{P}(V)$ as in relation~(2b)
only depends on the rank and the base of the vector bundle $V$, hence Theorem \ref{quotientmap} holds also for Bittner's definition.
To avoid the decomposition step it is more reasonable for us to work with our definition.
\end{rem}

 \section{The modified Grothendieck ring of varieties}
 \label{modified Grothendieck ring}
 
 \noindent
 Due to the nature of wild actions,
 we are not able to compute quotients of such actions in the usual Grothendieck ring
 by decomposing a scheme into $G$-invariant subschemes and computing the quotient separately on these subschemes.
 The quotient of a closed subscheme has in general
 a purely inseparable map to the image of this subscheme under the quotient map.
 But in the wild case this map
 might not be a piecewise isomorphism,
 as we will see in Example~\ref{exinsep3}.
 We do not know whether the classes of two schemes connected with such a
 morphism  have the same class in the Grothendieck ring of varieties.
Therefore we now introduce the modified Grothendieck ring of varieties, in which their classes are the same.
 
 \begin{defn}
A morphism of schemes $f:Y\to X$ is called a \emph{universal homeomorphism},
if for every morphism of schemes $X'\to X$ the morphism
of schemes $f': Y\times_X X'\to X'$ induced by base change is a homeomorphism.
 \end{defn}


 \begin{defn}\label{dfn modified}
 Let $\mathcal{I}_S\subset K_0(\Var_S)$ be the ideal
 generated by elements of the form $[X]-[Y]$
 such that there exists a universal homeomorphism
 $f:X\to Y$. The \emph{modified Grothendieck ring of $S$-varieties} is defined as the quotient
  \[
   K_0^{\text{mod}}(\Var_S):=K_0(\Var_S)/\mathcal{I}_S.
  \]
Denote by $\LL_S$
 the class of the affine line $\A^1_S$.
  If the base scheme $S$ is clear from the
 context, we write $\LL$ instead of $\LL_S$.
 We define $\mathcal{M}^{\text{mod}}_S$ as the localization
 $K_0^{\text{mod}}(\Var_S)[\LL_S^{-1}]$.
 \end{defn}
 
 \begin{nota}
  If $S=\Spec(A)$ is an affine scheme,
  we write $K_0^{\text{mod}}(\Var_A)$ for $K_0^{\text{mod}}(\Var_{S})$,
  $\mathbb{L}_A$ for $\mathbb{L}_S$,
  and $\mathcal{M}_A^{\text{mod}}$ for $\mathcal{M}_S^{\text{mod}}$.
 \end{nota}

  \begin{rem}\label{mod0}
  If $S$ is a Noetherian $\mathbb{Q}$-scheme, then the quotient map
  \[
   K_0(\Var_S)\to K_0^{\text{mod}}(\Var_S)
  \]
is an isomorphism, see \cite[Corollary 3.8.3]{MR2885336}.
In particular this holds if $S$ is a scheme of finite type over any field of characteristic $0$.

It is not known whether it is an isomorphism in positive characteristic.
The problem is that the standard specializing  morphisms used to distinguish elements in the Grothendieck ring factor through 
the modified Grothendieck ring, see \cite[Proposition 4.1]{k0mod}.
 \end{rem}

 \noindent
 We will now prove some technical lemmas which will be used later to compute
 quotients in the (modified) Grothendieck ring of varieties.
 
 \begin{lem}[Spreading out for the modified Grothendieck ring]\label{spreading out}
  Take a directed system of Noetherian commutative rings
  ${(A_i,\varphi_{ij}:A_i\to A_j)}$,
  and denote by $A$ the direct limit of this system in the category of rings.
  Then there exists an isomorphism of rings
  \[
\varphi^{\text{mod}}: \lim_{\stackrel{\longleftarrow}{i\in I}}K_0^{\text{mod}}(\Var_{A_i})\to K^{\text{mod}}_0(\Var_{A}).
  \]
 \end{lem}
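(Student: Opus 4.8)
The plan is to imitate the classical spreading-out statement for the ordinary Grothendieck ring of varieties (as in, e.g., the work of Loeser--Sebag or Nicaise), and then check that the whole argument descends to the modified quotient. So first I would recall the underlying geometric input: any scheme of finite type over $A=\varinjlim A_i$ is already defined over some $A_i$, any morphism between two such is defined over some $A_j$ with $j\geq i$, any isomorphism is witnessed over some larger index, and the same holds for closed immersions. Concretely, $X/A$ comes from $X_i/A_i$ by base change for $i$ large, $X\times_A X'$ and fiber products are compatible with the transition maps, and a universal homeomorphism over $A$ is, after enlarging the index, the base change of a universal homeomorphism over $A_i$ (this last point needs a little care, see below). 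These facts give, for each $i$, a ring homomorphism $K_0(\Var_{A_i})\to K_0(\Var_A)$ sending $[X_i]\mapsto [X_i\times_{A_i}A]$, compatible with the $\varphi_{ij}$, hence a ring homomorphism $\psi:\varinjlim_i K_0(\Var_{A_i})\to K_0(\Var_A)$, and the classical result is that $\psi$ is an isomorphism.

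Next I would upgrade $\psi$ to the modified setting. Since each $K_0(\Var_{A_i})\to K_0^{\text{mod}}(\Var_{A_i})$ is surjective and the $\varphi_{ij}$ are ring maps, $\varinjlim_i K_0^{\text{mod}}(\Var_{A_i})$ is the quotient of $\varinjlim_i K_0(\Var_{A_i})$ by the ideal $\varinjlim_i \mathcal{I}_{A_i}$ generated by the images of all the $\mathcal{I}_{A_i}$. Under $\psi$ this ideal lands in $\mathcal{I}_A$, because the base change of a universal homeomorphism is a universal homeomorphism (immediate from the definition of universal homeomorphism, which is stable under arbitrary base change). Hence $\psi$ descends to a surjective ring homomorphism
\[
\varphi^{\text{mod}}:\varinjlim_i K_0^{\text{mod}}(\Var_{A_i})\to K_0^{\text{mod}}(\Var_A),
\]
and $\varphi^{\text{mod}}$ is surjective because $\psi$ is. It remains to prove injectivity, i.e.\ that $\psi^{-1}(\mathcal{I}_A)=\varinjlim_i\mathcal{I}_{A_i}$. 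For this I would take a generator $[X]-[Y]$ of $\mathcal{I}_A$ coming from a universal homeomorphism $f:X\to Y$ over $A$, descend $f$ to a morphism $f_i:X_i\to Y_i$ of $A_i$-schemes for some $i$ (possible after enlarging the index, since $f$ is a morphism between finitely presented schemes over a filtered colimit of rings), and then argue that after possibly enlarging $i$ further, $f_i$ is itself a universal homeomorphism. The key technical point here is that "being a universal homeomorphism" is equivalent to being integral, universally injective, and surjective (equivalently: integral, radicial, and surjective), and each of these properties is constructible/limit-preserving: universal surjectivity and universal injectivity descend by the standard limit arguments for finitely presented morphisms in \cite{EGAIV}, and integrality/finiteness likewise descends after enlarging the index. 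Granting that, $[X_i]-[Y_i]\in\mathcal{I}_{A_i}$ maps to $[X]-[Y]$, so every generator of $\mathcal{I}_A$ is hit by an element of some $\mathcal{I}_{A_i}$; combined with the classical isomorphism $\psi$ this gives $\psi^{-1}(\mathcal{I}_A)=\varinjlim_i\mathcal{I}_{A_i}$ and hence injectivity of $\varphi^{\text{mod}}$.

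The main obstacle is precisely this descent of the property "universal homeomorphism" through the limit: one must make sure that not only the morphism $f$ but the full force of "universally a homeomorphism after every base change" comes from a finite level. I expect to handle it by reducing to the characterization via (integral $+$ radicial $+$ surjective), since each of those three conditions is of finite-presentation type and therefore descends by \cite[IV, \S8]{EGAIV}; alternatively one can use that a finitely presented morphism is a universal homeomorphism iff it is finite, radicial and surjective, and all three of "finite", "radicial", "surjective" are constructible conditions on the base that spread out. Everything else — compatibility with fiber products for the ring structure, surjectivity of $\varphi^{\text{mod}}$, the identification of the colimit of quotients with the quotient of the colimit — is formal once the classical unmodified spreading-out isomorphism $\psi$ is in hand.
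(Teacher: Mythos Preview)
Your proposal is correct and follows essentially the same route as the paper: start from the classical spreading-out isomorphism for the ordinary Grothendieck ring, pass to the modified quotient using that universal homeomorphisms are stable under base change (giving well-definedness and surjectivity), and for injectivity descend a given universal homeomorphism over $A$ to a morphism over some $A_i$ which, after enlarging $i$, is again a universal homeomorphism. The only cosmetic difference is that the paper invokes \cite[Th\'eor\`eme~8.10.5]{MR0217086} as a black box for this last step, whereas you unpack it via the characterization ``finite, radicial, surjective'' and spread out each property separately---which is exactly how one proves that part of \cite[Th\'eor\`eme~8.10.5]{MR0217086} anyway.
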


 \begin{proof}
  Consider the ring morphism
    \[
\varphi:  \lim_{\stackrel{\longleftarrow}{i\in I}}K_0(\Var_{A_i})\to K_0(\Var_{A}).
  \]
  induced by the ring morphism $\varphi_i:K_0(\Var_{A_i})\to K_0(\Var_{A})$
  given by
  sending the class of an $A_i$-scheme $U$ to the class of $U\times_{\Spec(A_i)}\Spec(A)$.
   By \cite[Proposition~2.9]{MR2770561}, $\varphi$ is an isomorphism.
  As a universal homeomorphism is stable under base change,
  for every universal homeomorphism $f:X\to Y$ between two $A_i$-schemes,
  the base change of $f$ to $\Spec(A)$
 is also a universal homeomorphism.
  Hence we get well defined maps ${\varphi_i^{\text{mod}}:K_0^{\text{mod}}(\Var_{A_i})\to K_0^{\text{mod}}(\Var_A)}$,
  which induce a well defined surjective map $\varphi^{\text{mod}}$ as in the claim.
 
 We still need to show that $\varphi^{\text{mod}}$ is injective.
 So let $f: X\to Y$ be a universal homeomorphism between $A$-schemes.
  By \cite[Theorem 8.8.2]{MR0217086}
  there exist an $i$ and a morphism of $A_i$-schemes $f_i : X_i \to Y_i$
  such that the base change of $f_i$ to $\Spec(A)$ is $f$.
  By \cite[Theorem 8.10.5]{MR0217086} $f$ is a universal homeomorphism
  if and only if there is a $j\geq i$ 
  such that the base change of $f_i$ induced by ${\Spec(A_j)\to \Spec(A_i)}$ is a universal homeomorphism.
  Hence $\varphi^{\text{mod}}$ is injective.
  \end{proof}
 
 \noindent
Recall that we assume that $S$ is a separated scheme with good action
of a finite group $G$,
such that the quotient map $S\to S/G$ is finite
and $S/G$ is separated and locally Noetherian.
The next lemmas will enable us to decompose the quotient of schemes
in the category $(\Sch_{G,S})$ in the (modified) Grothendieck ring of varieties.

  \begin{lem}\label{closed mod} 
 Let $X\in (\Sch_{G,S})$, and denote
  by $\pi: X\to X/G$ the quotient.
   Let $Y\subset X$ be a closed $G$-invariant subscheme,
   and let $Z$ be the image of $Y$ under $\pi$.
   Then the $G$-action on $X$ restricts to a good $G$-action on $Y$,
   and there exists a universal homeomorphism 
   $f: Y/G\to Z$.
   Hence in particular
   \[
    [Y/G]=[Z]\in K_0^{\text{mod}}(\Var_{S/G}).
   \]
\end{lem}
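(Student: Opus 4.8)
The plan is to reduce at once to an affine computation and then read everything off from commutative algebra. The first assertion is immediate: since $Y$ is closed and $G$-invariant, each $g_X\in\Aut(X)$ restricts to an automorphism $g_Y$ of $Y$, and if the orbit of a point of $Y$ lies in an affine open $V\subseteq X$ (which it does, being an orbit in $X$ and the $G$-action on $X$ being good), then $V\cap Y$ is an affine open of $Y$ containing that orbit; so the restricted action is good.

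For the second assertion I would take $Z$ to be the scheme-theoretic image of $Y$ under $\pi$ (its underlying set is $\pi(Y)$ since $\pi|_Y$ is finite, hence closed), so that $f\colon Y/G\to Z$ is the factorization through the quotient of the $G$-invariant morphism $Y\to Z$. Under the goodness hypothesis the quotient $X\to X/G$ is, locally over $X/G$, of the form $\Spec(B)\to\Spec(B^G)$ for a ring $B$ with $G$-action, see \cite[Expos\'e V]{MR0238860}; and the formation of $Y/G$, of $Z$, and of $f$ are all compatible with restriction to such affine opens of $X/G$. Since being a universal homeomorphism is local on the target, it therefore suffices to treat $X=\Spec(B)$ and $Y=\Spec(B/I)$ for a $G$-invariant ideal $I\subseteq B$; then $Y/G=\Spec((B/I)^G)$ and $Z=\Spec\bigl(B^G/(I\cap B^G)\bigr)$, and, writing $A$ for the image of $B^G$ in $B/I$ and $C:=(B/I)^G$, the morphism $f$ is $\Spec(C)\to\Spec(A)$ induced by the inclusion $\theta\colon A\hookrightarrow C$. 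I would then check that $\theta$ is integral, that $\Spec(\theta)$ is surjective, and that $\theta$ is radicial (universally injective); this is equivalent to $f$ being a universal homeomorphism.

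The single computational input is the following: for $c\in C$ and any lift $b\in B$ of $c$, the coefficients of $\prod_{g\in G}(T-gb)$ lie in $B^G$ and reduce in $B/I$ to those of $(T-c)^{|G|}$, so that $(T-c)^{|G|}\in A[T]$. In particular $C$ is integral over $A$; and $\theta$ is visibly injective, so $\Spec(\theta)$ is surjective. For bijectivity of $\Spec(\theta)$ on points I would invoke the classical fact that, for a finite group acting on a ring, the induced map of spectra is surjective with the group acting transitively on each fibre: applied to $G$ acting on $B/I$ this gives a surjection $\Spec(B/I)\to\Spec(C)$ with fibres the $G$-orbits, and applied to $G$ acting on $B$, after intersecting with the $G$-stable closed set $V(I)$, a surjection $\Spec(B/I)\to\Spec(A)$ with fibres the $G$-orbits; hence $\Spec(\theta)$ is a bijection, and being continuous and closed (as $\theta$ is integral) it is a homeomorphism. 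Finally, $\kappa(\mathfrak q)$ is generated over $\kappa(\mathfrak p)$ by the images of elements $c\in C$, for $\mathfrak q\in\Spec(C)$ over $\mathfrak p\in\Spec(A)$; reducing $(T-c)^{|G|}\in A[T]$ modulo $\mathfrak p$ and comparing coefficients---using, in characteristic $p>0$, the factorization $(T-\bar c)^{|G|}=(T^{p^k}-\bar c^{\,p^k})^m$ with $|G|=p^k m$ and $p\nmid m$---forces $\bar c^{\,p^k}\in\kappa(\mathfrak p)$ (and $\bar c\in\kappa(\mathfrak p)$ in characteristic zero). Hence $\kappa(\mathfrak q)/\kappa(\mathfrak p)$ is purely inseparable, so $\theta$ is radicial and $f$ is a universal homeomorphism.

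The last assertion then follows at once: $Y/G$ and $Z$ are of finite type and separated over $S/G$ (the composite $Y/G\to Z\hookrightarrow X/G$ is finite), and a universal homeomorphism $f\colon Y/G\to Z$ gives $[Y/G]-[Z]\in\mathcal{I}_{S/G}$, hence $[Y/G]=[Z]$ in $K_0^{\text{mod}}(\Var_{S/G})$ by Definition~\ref{dfn modified}. I expect the main obstacle to be organizational rather than deep---ensuring the affine-local pieces glue (the quotient, the scheme-theoretic image, and the universal-homeomorphism property are all local on $X/G$) and pinning down the exact commutative-algebra statements invoked---together with the one genuinely delicate point, the residue-field computation, where it is crucial that we claim only pure inseparability: in the wild case $B^G\to(B/I)^G$ need not be surjective, so $\theta$ is not an isomorphism in general, and this is precisely why the lemma is phrased in the modified Grothendieck ring.
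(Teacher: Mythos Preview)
Your proof is correct, and it takes a genuinely different route from the paper's. The paper does not reduce to the affine case at all; instead it argues globally. For finiteness it uses that $\pi$ is finite (by \cite[Expos\'e V, Corollaire 1.5]{MR0238860}), that $i$ is proper, and then that a proper quasi-finite map is finite. For bijectivity on points it observes directly that both $Y/G$ and $Z$ have points the $G$-orbits in $Y$. For pure inseparability of residue fields it invokes Bourbaki \cite[Chapitre V.2, Th\'eor\`eme 2]{MR782297}: for $x\in Y$ with stabilizer $G_x$ and residue field $M$, this theorem says $M$ is normal over both $L=\kappa(y)$ and $K=\kappa(z)$ with $G_x$ surjecting onto the Galois groups, and a short field-theoretic argument then shows $M^{G_x}/K$ (hence $L/K$) is purely inseparable.

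Your approach is more elementary and self-contained: the single identity $\prod_{g\in G}(T-gb)\equiv (T-c)^{|G|}\pmod I$ simultaneously yields integrality of $C$ over $A$ and, after reducing modulo a prime and extracting the coefficient of $T^{p^k(m-1)}$ in $(T^{p^k}-\bar c^{\,p^k})^m$, the pure inseparability of residue fields. This avoids any external reference for the residue-field step. The one place where your write-up could be tightened is the bijectivity argument: the cleanest phrasing is that $\Spec(B/I)\to\Spec(A)$ is the restriction of $\Spec(B)\to\Spec(B^G)$ to the $G$-stable closed subsets $V(I)\to V(I\cap B^G)$, so its fibres are still $G$-orbits, and surjectivity follows from lying-over for the integral injection $A\hookrightarrow B/I$; this makes the comparison with the fibres of $\Spec(B/I)\to\Spec(C)$ immediate. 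What the paper's approach buys is a structural picture (the tower $K\subset L\subset M^{G_x}\subset M$) that is reused verbatim in the tame refinement, Lemma~\ref{rem tame decomposition}, where Bourbaki's sharper statement gives $L=K$; your norm-polynomial method would also recover this (in the tame case $|G_x|$ is invertible, so $\bar c\in\kappa(\mathfrak p)$ directly), but the paper's phrasing makes the two lemmas visibly parallel.
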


  \begin{proof}
Let $i:Y\hookrightarrow X$ be the inclusion map.
As $Y\subset X$ is a $G$-invariant closed subscheme, 
the $G$-action on $X$ restricts to $Y$. 
As every affine subscheme of $X$
will restrict to an affine subscheme of $Y$,
this action is good.
Denote by $\pi_Y:Y\to Y/G$ the quotient map.
As $i$ is $G$-equivariant, we get an induced map $i_G:Y/G\to X/G$ with $\pi \circ i=i_G\circ \pi_Y$.
As $\pi$ maps $Y$ to $Z$, $i_G$ factors through $Z$.
We are going to show that ${i_G: Y/G \to Z}$ is a universal homeomorphism.
By \cite[2.4.5.]{MR0199181} it suffices to show that $i_G$ is finite, surjective and purely inseparable.

As both the points of $X/G$ and $Y/G$ are just orbits of the action of $G$,
the map $i_G: Y/G \to Z$ is a bijection on points.

As $X$ is of finite type over $S$ and hence over $S/G$ using that $S\to S/G$ is finite,
$\pi$ is finite
by \cite[Expos\'e V, Corollaire 1.5]{MR0238860}.
As $i$ is proper, the same holds for $\pi \circ i$.
As moreover $\pi_Y$ is surjective, $i_G$ is proper by \cite[Proposition 12.59]{MR2675155}.
We have already seen that $i_G$ is quasi-finite, hence it is finite.
It remains to show that $i_G$ is purely inseparable, i.e., that for all $y\in Y/G$
the residue field $L$ of $y$ is purely inseparable over the residue field $K$ of $z:=i_G(y)$.

Using \cite[Capitre V.2, Th\'eor\`eme 2]{MR782297}
we get the following:
let $G_x$ be the stabilizer of a point $x\in Y\subset X$ of the orbit of $G$ over $y$ and $z$, respectively,
and let $M$ be the residue field of $x$.
Then $M$ is normal over $L$ and over $K$, and $G_x$ surjects on $\Gal(M,L)$ and $\Gal(K,L)$.
Hence we get the following inclusions of fields
\[
 \xymatrix{
 L \ar@{^(->}[r] &M^{G_x}\ar@{^(->}[r]& M\\
 K.\ar@{^(->}[ru] \ar@{^(->}[u]&
 }
\]
As $M$ is normal over $K$,
 $M^{G_x}$ is normal over $K$, too.
We now can split this extension in a separable extension $K'$ over $K$,
and a purely inseparable extension $M^{G_x}$ over $K'$.
Observe that $K'$ is normal over $K$,
and therefore $K=K'^{\Gal(K',K)}$.
But $\Gal(K',K)$ is a quotient of $\Gal (M^{G_x},K)$, and the latter is trivial. Hence $K=K'$.
Therefore $M^{G_x}$ is purely inseparable over $K$, and hence the same holds for $L$.
This finishes the proof.
  \end{proof}

 \begin{lem}\label{rem tame decomposition}
 Assumptions and notation as in Lemma \ref{closed mod}.
 Assume moreover that the action of $G$ on $X$ is tame.
Then there exist a map $f:Y/G\to Z$ which is a piecewise isomorphism,
hence in particular
\[
    [Y/G]=[Z]\in K_0(\Var_{S/G}).
   \]
 \end{lem}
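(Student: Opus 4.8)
The plan is to bootstrap the universal homeomorphism $f = i_G : Y/G \to Z$ produced in Lemma \ref{closed mod} into a piecewise isomorphism, using the tameness hypothesis to rule out any genuine inseparability. Since $f$ is finite, surjective and purely inseparable, it suffices to show that on a dense open subscheme of (each irreducible component of) $Z$ it is an isomorphism; then one stratifies $Z$ by the locally closed locus where $f$ fails to be an isomorphism, applies the scissors relation in $K_0(\Var_{S/G})$, and proceeds by Noetherian induction on $Z$ (which is Noetherian since $S/G$ is locally Noetherian and $X$, hence $Z$, is of finite type over $S/G$). So the whole lemma reduces to the local statement: for every $y \in Y/G$ with image $z = f(y)$, the residue field extension $L/K$ is \emph{separable} — combined with the fact that it is purely inseparable by Lemma \ref{closed mod}, this forces $L = K$, and then generic flatness plus $f$ being finite of degree $1$ gives an isomorphism over a neighbourhood of $z$.

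For the local statement I would run through the field-theoretic picture already set up in the proof of Lemma \ref{closed mod}, now exploiting tameness. Pick $x \in Y \subset X$ in the orbit lying over $y$ and $z$, with residue field $M$ and stabilizer $G_x \subset G$. By \cite[Capitre V.2, Th\'eor\`eme 2]{MR782297}, $M/L$ and $M/K$ are normal with $G_x$ surjecting onto $\Gal(M,L)$ and $\Gal(M,K)$. Tameness of the $G$-action on $X$ means the residue characteristic $p$ of $x$ is either $0$ or prime to $\lvert G_x \rvert$. The inseparable part of a normal extension $M/K$ corresponds to a $p$-group quotient of its automorphism-type data; more concretely, in the decomposition $K \subset M^{G_x}$ with $M^{G_x}/K$ purely inseparable (the purely inseparable part of the normal extension $M^{G_x}/K$, as isolated in Lemma \ref{closed mod}), the field $M^{G_x}$ is fixed by all of $G_x$, so $[M : M^{G_x}] = \lvert G_x \rvert / \lvert\Gal(M, M^{G_x})\rvert$ is not divisible by $p$ while $[M^{G_x}:K]$ is a power of $p$; but $M/K$ being normal with $p \nmid \lvert G_x\rvert$ forces $[M^{G_x}:K]$ to be a power of $p$ that also divides something prime to $p$, hence equals $1$. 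Thus $M^{G_x} = K$, so $L \subset M^{G_x} = K$, i.e. $L = K$. (In characteristic $0$ there is nothing to prove: every extension is separable.)

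With $L = K$ for all $y$, the finite morphism $f: Y/G \to Z$ has trivial residue extensions everywhere and is generically étale; being purely inseparable and finite it is in fact generically an isomorphism onto $Z$. Precisely, on the generic point of each irreducible component of $Z$ the fibre of $f$ is $\Spec$ of a finite $K$-algebra that is local, reduced (no inseparability, and no nilpotents after possibly shrinking, using generic reducedness — or simply work on $Z_{\mathrm{red}}$, which does not change classes in $K_0$) with residue field $K$, hence equal to $K$; so $f$ is an isomorphism over a dense open $U \subset Z$. Now write $[Z] = [U] + [Z \setminus U]$ and $[Y/G] = [f^{-1}(U)] + [f^{-1}(Z\setminus U)] = [U] + [f^{-1}(Z\setminus U)]$ in $K_0(\Var_{S/G})$; since $f^{-1}(Z \setminus U) \to Z \setminus U$ is again a universal homeomorphism with all residue extensions trivial, Noetherian induction on the dimension (or on the length of a chain of closed subsets) of $Z$ gives $[f^{-1}(Z\setminus U)] = [Z\setminus U]$, hence $[Y/G] = [Z]$. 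The resulting $f$ assembled from the isomorphisms on the strata is a piecewise isomorphism, as claimed.

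The main obstacle is the purely field-theoretic step: showing that tameness of $G_x$ on $M$ kills the inseparable part $M^{G_x}/K$. One has to be a little careful because $\Gal(M,K)$ need not be all of $G_x$ (only a quotient of it), so the bound on the inseparable degree must be extracted from $G_x$ surjecting onto $\Gal(M,K)$ together with the structure of normal (= purely inseparable followed by Galois) extensions; the cleanest route is probably to note that $M^{G_x}$ is the fixed field of a group of order prime to $p$ acting on $M$, and a field is fixed by a group of order prime to the characteristic only if the extension it cuts out is separable — so $M/M^{G_x}$ separable, and since $M^{G_x}/K$ is purely inseparable inside the normal $M/K$ while $M/K$ has no inseparability beyond a $p$-part controlled by $G_x$, one gets $M^{G_x} = K$. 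Everything else is the standard stratify-and-induct argument already implicit in Lemma \ref{rem tame decomposition}'s cousin, Lemma \ref{closed mod}.
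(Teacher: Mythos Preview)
Your overall plan matches the paper's proof: show that at every point the residue extension $L/K$ is trivial, deduce that the finite bijection $i_G$ is an isomorphism over a dense open of each component (after passing to reduced schemes, which does not change classes), and then finish by Noetherian induction and the scissors relation. That is exactly what the paper does.

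The gap is in the field-theoretic step. From \cite[Capitre~V.2, Th\'eor\`eme~2]{MR782297} you know $M/K$ is normal and $G_x$ surjects onto $\Gal(M,K)$, but this only bounds the \emph{separable} degree $[M:K]_{\mathrm{sep}}=\lvert\Gal(M,K)\rvert$ by $\lvert G_x\rvert$; it says nothing whatsoever about the inseparable part $[M:K]_{\mathrm{insep}}$. So your assertion that ``$M/K$ has no inseparability beyond a $p$-part controlled by $G_x$'' is precisely the statement that remains to be proved, and the degree-divisibility reasoning you sketch does not establish it. (Incidentally, the formula $[M:M^{G_x}]=\lvert G_x\rvert/\lvert\Gal(M,M^{G_x})\rvert$ is not correct either: by Artin the extension $M/M^{G_x}$ is Galois with group the image of $G_x$ in $\Aut(M)$, so its degree is $\lvert\mathrm{im}(G_x\to\Aut(M))\rvert$, not the quotient you wrote.)

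The paper dispatches this step by invoking \cite[Capitre~V.2, Proposition~5 and Corollaire]{MR782297}, which says directly that when the stabilizer has order prime to the residue characteristic, the residue field of the quotient is exactly $M^{G_x}$; applying this once to $X\to X/G$ and once to $Y\to Y/G$ gives $K=M^{G_x}=L$. If you want to see why by hand: tameness makes $\lvert G_x\rvert$ a unit in the local ring at $x$, so the Reynolds operator $a\mapsto \lvert G_x\rvert^{-1}\sum_{g\in G_x} g(a)$ is available and shows that the always-injective map from the residue field of the invariant ring into $M^{G_x}$ is also surjective. This averaging is the genuine input from tameness; without it the conclusion $L=K$ can fail, as Example~\ref{exinsep3} illustrates.
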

 
 \begin{proof}
Use the same notation as in the proof of Lemma \ref{closed mod}.
As $G$ acts tamely on $X$,
it follows from \cite[Capitre~V.2, Proposition 5 and Corollaire]{MR782297} that
 $M^{G_x}$ is actually equal to $L$ and to $K$, hence $L=K$.
 Hence we have a finite bijective map
 $i_G: Y/G \to Z$
 such that for every point $y\in Y/G$
 the residue field of $y$ is isomorphic with the residue field of  $i_G(y)$.
 As an $S/G$-variety has the same class in $K_0(\Var_{S/G})$ as the reduced underlying scheme,
 we can assume that both $Y/G$ and $Z$ are reduced.
  Take a generic point $\eta \in Y/G$ with function field $\kappa_\eta$.
  The image $i_G(\eta)\in Z$ will be a generic point with function field isomorphic to $\kappa_\eta$.
  Hence we can find open subschemes $U\subset Y/G$ and $V\subset Z$,
  such that $i_G:U\to V$ is an isomorphism.
  Now we can proceed with $i_G:Y/G\setminus U\to Z\setminus V$, and use Noetherian induction
  to get that $i_G$ is a piecewise isomorphism.
  The claim now follows using the scissors relation in $K_0(\Var_{S/G})$.
   \end{proof}

   \noindent
If the action of $G$ on $X$ is wild, we will really need
to work in the modified Grothendieck ring, as
the following examples show.

 \begin{ex}\label{exinsep}
 Let $p$ be a prime, $G=\mathbb{Z}/p\mathbb{Z}$ and let $k$ be a field of characteristic $p$.
  Let $X=\mathbb{A}^2_k=\Spec(k[x,y])$,
  and consider the action of $G$ on $X$ given by sending $x$ to $x+y$ and $y$ to $y$.
  We have that $X/G=\Spec(k[x^p+(p-1)xy^{p-1}, y])\cong \Spec(k[u,y])$.
  Denote by $\pi:X\to X/G$ the quotient map.
  
  Consider the $G$-invariant closed subscheme
  $Y=\Spec(k[x,y]/(y))\cong \A^1_k\subset X$.
Then the induced action on $Y$ is trivial,
 and the induced map $i_G$ from $Y=Y/G$ to $Z=\pi(Y)=\Spec(k[u,y]/(y))\cong \Spec(k[u])$ is given by sending $u$ to $x^p$.
 Note that $k(u)$ is the function field of $Z$
 and $k(x)$ is the function field of $Y/G=Y$.
 The field extension $k(u)\subset k(x)$ induced by $i_G$ is radical of degree $p$.
 As the characteristic of $k$ is equal to $p$, it is purely inseparable.
 
 Nevertheless, $Y$ and $Z$ are isomorphic over $k$, but this isomorphism is not given by $i_G$.
 \end{ex}

 \begin{ex}\label{exinsep3}
   Let $p>3$ be a prime, let  $G=\mathbb{Z}/p\mathbb{Z}$, and let $k$ be a field of characteristic $p$.
  Let $X=\mathbb{A}^6_k=\Spec(k[x,y,a,a',b,b'])$,
  and consider the action of $G$ on $X$ given by sending $P(x,y,a,a',b,b')\in k[x,y,a,a',b,b']$
  to ${P(x,y,a+a',a',b+b',b')}$.
  One can check that
  \begin{align*}
  X/G&=\Spec(k[x,y,a^p+(p-1)aa'^{p-1},a',b^p+(p-1)bb'^{p-1},b'])\\
  &\cong \Spec(k[x,y,u_a,a',u_b,b']).
 \intertext{ Denote by $\pi:X\to X/G$ the quotient map.
  Consider}
   Y&=\Spec(k[x,y,a,a',b,b']/(a',b',x^3+a^px-y^2+b^p))\\
  &=\Spec(k[x,y,a,b]/(x^3+a^px-y^2+b^p))\subset X.
  \end{align*}
  Note that $Y$ is $G$-invariant, and
 the induced action on $Y$ is trivial.
 As 
 \[
  x^3+u_ax-y^2+u_b=x^3+a^px-y^2+b^p+(p-1)axa'^{p-1}+(p-1)bb'^{p-1},
 \]
 we get that
 \[
 k[x,y,u_a,a',u_b,b']\cap (a',b',x^3+a^px-y^2+b^p)=(a',b',x^3+u_ax-y^2+u_b),
 \]
and hence
$Z=\pi(Y)=\Spec(k[x,y,u_a,u_b]/(x^3+u_ax-y^2+u_b))$.
Note that the residue field of the generic point of $Z$
is isomorphic to the function field of the elliptic curve
$E=\Spec(K[x,y]/(x^3+ax-y^2+b))$
with $K=K(a,b)\cong K(u_a,u_b)$.
Let $\varphi: K\to K$ be the Frobenius map.
Then 
\[
E^{(p)}:=E\times_{\varphi}\Spec(K)\cong \Spec(K[x,y]/(x^3+a^px-y^2+b^p)).
\]
Note that the residue field of the generic point of $Y=Y/G$
 is isomorphic to the function field of $E^{(p)}$.
 Now we compute the $j$-invariant of $E$  and $E^{(p)}$:
 \[
  j(E)=1728\frac{4a^3}{4a^3+27b^2} \text{ and } j(E^{(p)})=1728\frac{4(a^p)^3}{4(a^p)^3+27(b^p)^2} 
 \]
Hence $E$ and $E^{(p)}$ will have the same $j$-invariant
if and only if
 ${b^2}/{a^3}=({b^2}/{a^3})^p$.
As this was only true if ${b^2}/{a^3}$ was in $\mathbb{F}_p$,
the $j$-invariants
 of $E$ and $E^{(p)}$ are different.
 Therefore $E$ and $E^{(p)}$ are not isomorphic, and 
 hence $Z$ and $Y$ cannot be piecewise isomorphic.
 But by \cite[Theorem 2.13]{lzero} this does not imply that $Z$ an $Y$ cannot have the same class in the Grothendieck ring.
 In fact it is not known whether $E$ and $E^{(p)}$,
 and thus $Z$ and $Y$, have the same class in the Grothendieck ring.
 \end{ex}

 \begin{rem} \label{rem decomposing}
 Take $X\in (\Sch_{S,G})$, 
 let $Y\subset X$ be a closed $G$-invariant subscheme and let $U=X\setminus Y$ be the compliment.
 Let $\pi:X\to X/G$ be the quotient map.
 By
 \cite[Expos\'e V, Corollaire 1.5]{MR0238860}
 we have that
$U/G \cong \pi(U)$.
Hence
\begin{align*}
  [X/G]&=[U/G]+[Y/G]\in K_0(\Var_{S/G})\\
\shortintertext{if and only if}
 [Y/G]&=[\pi(Y)]\in K_0(\Var_{S/G}).
\end{align*}
We do not know whether this is true or not,
even if $S=\Spec(k)$ with $k$ a finite field, see Example \ref{exinsep3}.
 \end{rem}

\begin{lem} \label{lemma vezel}
Take two schemes $V,B\in (\Sch_{G,S})$, and let
${\varphi:V\to B}$ be a $G$-equivariant morphism of finite type.
 Denote the induced map between the quotients with ${\varphi_G:V/G\to B/G}$.
 Let $x\in B/G$ be a point with residue field $k$,
 let $b$ be a point in $B$ mapped to $x$ under the quotient map,
 and let $G_b$ be the stabilizer of $b$.
 Assume that $G_b$ is a normal subgroup of $G$.
 Then $G_b$ acts on $\varphi^{-1}(b)$, this action is good, and
 \[
  [\varphi_G^{-1}(x)]=[\varphi^{-1}(b)/G_b]\in K_0^{\text{mod}}(\Var_k).
 \]
\end{lem}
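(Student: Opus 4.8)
The plan is to reduce the computation of the fiber $\varphi_G^{-1}(x)$ to a statement about quotients by the stabilizer $G_b$, and then to identify that with $\varphi^{-1}(b)/G_b$. First I would observe that since $G_b$ is normal in $G$, the quotient $\Gamma := G/G_b$ acts on the fiber of the quotient map $B\to B/G$ over $x$; this orbit is finite, and $b$ is one of its points with $\Gamma$ acting transitively on the orbit. The scheme-theoretic fiber of $S\to S/G$ (resp.\ $B\to B/G$) over $x$ is then, up to universal homeomorphism, an induced object: it looks like $\Gamma$-many copies of $\Spec$ of the residue field $M$ of $b$ (with its $G_b$-action), and passing to quotients by $G$ of the pullback $V\times_B(\text{fiber over }b)$ amounts to taking the $G_b$-quotient of $\varphi^{-1}(b)$. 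The key input here is the structure theory of quotients already invoked in Lemma~\ref{closed mod}, in particular \cite[Capitre V.2, Th\'eor\`eme 2]{MR782297}: the stabilizer $G_b$ surjects onto $\Gal(M,L)$ where $L$ is the residue field of $x$, and $M^{G_b}$ is purely inseparable over $L$. This is exactly what makes the fiber comparison hold only after killing universal homeomorphisms, which is why the statement lives in $K_0^{\text{mod}}(\Var_k)$ rather than $K_0(\Var_k)$.

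Next I would make the first two claims precise: that $G_b$ acts on $\varphi^{-1}(b)$ with a good action. Since $b\in B$ is fixed by $G_b$ and $\varphi$ is $G$-equivariant, $G_b$ preserves $\varphi^{-1}(b)=V\times_B\Spec\kappa(b)$; goodness follows because $V\to B$ is of finite type and, restricting to an affine open of $V$ meeting a given orbit (which exists since the $G$-action on $V$ is good), one gets an affine open of $\varphi^{-1}(b)$ containing that orbit. So $\varphi^{-1}(b)/G_b$ exists in the category of schemes.

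The heart of the argument is the identification of classes. I would proceed by base change to the "fiber over $x$" inside $B$: let $B_x := B\times_{B/G}\Spec k$ and $V_x := V\times_{B/G}\Spec k = V\times_B B_x$. Then $V_x/G$ maps to $B_x/G = \Spec k$, and by Lemma~\ref{closed mod} (applied to the closed invariant subscheme $V_x \subset V\times_{S/G}\Spec k$, or more directly to a suitable presentation) one has $[V_x/G]=[\varphi_G^{-1}(x)]$ in $K_0^{\text{mod}}(\Var_k)$ once one checks $V_x/G\to \varphi_G^{-1}(x)$ is a universal homeomorphism — but in fact $V_x = \varphi_G^{-1}(x)\times_k(\text{something})$... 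Let me restructure: the cleanest route is to use that the quotient map $V\to V/G$ is finite and that formation of quotients commutes with flat (indeed, with the specific) base change to $\Spec k \to B/G$ in a way controlled by universal homeomorphisms; concretely, $V_x/G \to \varphi_G^{-1}(x)$ is integral, surjective, and radicial by the same residue-field computation as in Lemma~\ref{closed mod}, hence a universal homeomorphism by \cite[2.4.5.]{MR0199181}. Thus $[\varphi_G^{-1}(x)]=[V_x/G]$ in $K_0^{\text{mod}}(\Var_k)$. Now $B_x$ is a finite $k$-scheme on which $\Gamma=G/G_b$ acts, with the $G_b$-orbit of $b$ mapping onto it; writing $B_x$ (up to universal homeomorphism, killing the purely inseparable extension $M^{G_b}/k$) as a disjoint union of $\Gamma$-translates of $\Spec M^{G_b}$, one gets a $G$-equivariant decomposition of $V_x$ into $\Gamma$-translates of $\varphi^{-1}(b)^{M^{G_b}/M}$-type pieces, and taking the $G$-quotient collapses the $\Gamma$-action and leaves the $G_b$-quotient of $\varphi^{-1}(b)$, again up to a purely inseparable base-field extension which is a universal homeomorphism. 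Assembling these identifications in $K_0^{\text{mod}}(\Var_k)$ gives $[\varphi_G^{-1}(x)] = [\varphi^{-1}(b)/G_b]$.

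I expect the main obstacle to be the bookkeeping in the last step: making rigorous the claim that, after base change to $\Spec k$, the scheme $V_x$ with its $G$-action is "induced" from the $G_b$-scheme $\varphi^{-1}(b)$ up to universal homeomorphism, and that taking the $G$-quotient of an induced object returns the $G_b$-quotient of the original. One has to be careful that $B_x$ need not be reduced and its residue fields $M^{G_b}$ at the points over $x$ are only purely inseparable over $k$, not equal to $k$; this is precisely where working modulo $\mathcal I_k$ is essential and where the normality hypothesis on $G_b$ is used (so that $\Gamma$ genuinely acts and permutes the components transitively). The residue-field analysis needed for the "radicial" claims is already carried out in the proof of Lemma~\ref{closed mod} via \cite[Capitre V.2, Th\'eor\`eme 2]{MR782297}, so that part can be quoted rather than redone.
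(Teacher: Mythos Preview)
Your overall strategy is the same as the paper's: reduce $[\varphi_G^{-1}(x)]$ to the class of $(V\times_{B/G}k)/G$, then use normality of $G_b$ to recognise this as an object induced from the $G_b$-scheme $\varphi^{-1}(b)$ so that its $G$-quotient is $\varphi^{-1}(b)/G_b$. The second half (the induced-object argument via the free transitive $H=G/G_b$-action on the points of $\pi^{-1}(x)/G_b$) is exactly what the paper does, only you sketch it where the paper writes it out.

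Where your write-up has a genuine soft spot is the step $[V_x/G]=[\varphi_G^{-1}(x)]$. You want to say that the natural map $(V\times_{B/G}k)/G\to (V/G)\times_{B/G}k=\varphi_G^{-1}(x)$ is a universal homeomorphism, and you appeal to ``the same residue-field computation as in Lemma~\ref{closed mod}''. But Lemma~\ref{closed mod} is about closed $G$-invariant subschemes, and $\Spec(k)\to B/G$ is not in general a closed immersion nor flat, so neither that lemma nor the flat-base-change result \cite[Expos\'e~V, Proposition~1.9]{MR0238860} applies directly. The paper sidesteps this cleanly: it replaces $x$ by its closure $X\subset B/G$, applies Lemma~\ref{closed mod} to the closed $G$-invariant subscheme $\varphi^{-1}(\pi^{-1}(X))\subset V$ to get a universal homeomorphism $\varphi^{-1}(\pi^{-1}(X))/G\to \varphi_G^{-1}(X)$, base changes this along $\Spec(k)\to X$ (universal homeomorphisms are stable under base change), and finally uses that $\Spec(k)\to X$ \emph{is} flat because $x$ is the generic point of $X$, so that \cite[Expos\'e~V, Proposition~1.9]{MR0238860} gives an honest isomorphism $\varphi^{-1}(\pi^{-1}(x))/G\cong \varphi^{-1}(\pi^{-1}(X))/G\times_X\Spec(k)$. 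That two-step manoeuvre (closure, then generic-point flatness) is the missing ingredient in your argument; once you insert it, your proof matches the paper's.
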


\begin{proof}
 Let $\pi:B\to B/G$ and $\pi_V:V\to V/G$ be the quotient maps.
 Note that $\pi\circ \varphi=\varphi_G\circ \pi_V$.
 Let $X\subset B/G$ be the closure of $x$ in $B/G$.
 By construction $\varphi^{-1}(\pi^{-1}(X))=\pi_V^{-1}(\varphi_G^{-1}(X))$
 is a $G$-invariant closed subscheme mapped surjectively to $\varphi_G^{-1}(X)$ under
 the quotient map $\pi_V$.
 Thus
 by Lemma \ref{closed mod}
there is a universal homeomorphism
\[
f: \varphi^{-1}(\pi^{-1}(X))/G\to \varphi_G^{-1}(X).
\]
Hence we get a universal homeomorphism
\[
f_k: \varphi^{-1}(\pi^{-1}(X))/G\times_{X} \Spec(k)\to \varphi_G^{-1}(X)\times_{X} \Spec(k)= \varphi_G^{-1}(x), 
\]
because universal homeomorphisms are stable under base change.
 As $x$ is the generic point of $X$,
 $\Spec(k)\to X$ is flat.
 Hence by \cite[Expos\'e V, Proposition~1.9]{MR0238860},
 \[
  \varphi^{-1}(\pi^{-1}(x))/G=(\varphi^{-1}(\pi^{-1}(X))\times_{X} \Spec(k))/G
  \cong \varphi^{-1}(\pi ^{-1}(X))/G\times _{X} \Spec(k).
 \]
Thus in the modified Grothendieck ring we get
\[
 [\varphi_G^{-1}(x)]=[\varphi^{-1}(\pi^{-1}(X))/G\times_{X}\Spec(k)] =[\varphi^{-1}(\pi^{-1}(x))/G]\in K_0^{\text{mod}}(\Var_k).
\]
Now consider the stabilizer $G_b$ of $b\in \pi^{-1}(x)$.
As $G_b$ is a subgroup of $G$, it acts on $V$ and $B$.
By construction, $\pi^{-1}(x)$ is $G$-invariant and $\varphi$ is $G$-equivariant,
hence we get induced actions of $G_b$ on $\varphi^{-1}(\pi^{-1}(x))$ and $\pi^{-1}(x)$,
and an induced map 
\[
\psi:\varphi^{-1}(\pi^{-1}(x))/G_b\to \pi^{-1}(x)/G_b.
\]
As $G_b\subset G$ is a normal subgroup, we may consider $H:=G/G_b$.
$H$ acts on $\varphi^{-1}(\pi^{-1}(x))/G_b$ and $\pi^{-1}(x)/G_b$,
and $\psi$ is $H$-equivariant.
As $\pi^{-1}(x)$ is the inverse image of the point $x$ of the finite quotient map $\pi$,
$\pi^{-1}(x)$ is a finite union of points.
Thus also $\pi^{-1}(x)/G_b$ is a finite union of points $P_1,\dots,P_n$,
and hence $\varphi^{-1}(\pi^{-1}(x))/G_b\cong \bigcup \psi^{-1}(P_i)$.
As $G_b$ is the stabilizer of $b$ and $\pi^{-1}(x)$ is the orbit of $b$,
the action of $H$ on $\pi^{-1}(x)/G_b$ is free and transitive,
i.e., for every pair $i,j$ there is a unique $h_{ij}\in H$ 
with $h_{ij}(P_i)=(P_j)$.
Hence $h_{ij}$ also maps $\psi^{-1}(P_i)$ isomorphically to $\psi^{-1}(P_j)$.

Let $W$ be the disjoint union of $n$ copies of $\psi^{-1}(P_1)$.
Let $H$ act on $W$ as follows:
for $h\in H$ with $h(P_i)=P_j$,
let the corresponding automorphism of $W$ map the $i$-th copy of $\psi^{-1}(P_1)$ identically to the $j$-th copy.
It is obvious that $W/H\cong \psi^{-1}(P_1)$.
Consider the map $\varphi: W\to \varphi^{-1}(\pi^{-1}(x))/G_b$ given on the $i$-th copy of $\psi^{-1}(P_1)$
by $h_{1i}\rvert_{\psi^{-1}(P_1)}$.
One can check that $\varphi$ is a $H$-equivariant isomorphism with $H$-equivariant inverse,
hence we get that
\[
 \varphi^{-1}(\pi^{-1}(x))/G=\varphi^{-1}(\pi^{-1}(x))/G_b/H\cong W/H\cong \psi^{-1}(P_1).
\]
Without loss of generality we may assume that $P_1$ is the image of $b$ under the quotient map.
As $b$ is a component of $\pi^{-1}(x)$, $\varphi^{-1}(b)$ is open in $\varphi^{-1}(\pi^{-1}(x))$.
Moreover  it is $G_b$-invariant, because $G_b$ is the stabilizer of $b$ and $\varphi$ is $G_b$-equivariant.
So by \cite[Expos\'e V, Corollaire 1.4.]{MR0238860} we get that
\[
 \psi^ {-1}(P_1)\cong \varphi^ {-1}(b)/G_b.
\]
All together we have 
\[
 [\varphi_G^{-1}(x)] =[\varphi^{-1}(\pi^{-1}(x))/G]=[\psi^{-1}(P_1)]=[\varphi^{-1}(b)/G_b]\in K_0^{\text{mod}}(\Var_k).
\]

\end{proof}

%
 
\begin{rem}\label{rem fiber tame}
Assumption and notation as in Lemma \ref{lemma vezel}.
If $G$ acts tamely on $S$, and hence also on $V$ and $B$,
 we get that 
 \[
  [\varphi_G^{-1}(x)]=[\varphi^{-1}(b)/G_b]\in K_0(\Var_k).
 \]
This holds, because in this case we get, as in the proof of Lemma \ref{rem tame decomposition}, that
$f$
is a finite, bijective map such that the map between the residue fields of the points are isomorphic.
 Hence the same holds for $f_k$,
 and we can show, as done in Lemma \ref{rem tame decomposition},
that $\varphi_G^{-1}(x)$
 and $\varphi^{-1}(\pi^{-1}(X))/G\times \Spec(k)\cong \varphi^{-1}(b)/G_b$
 have the same class in $K_0(\Var_k)$.
 \end{rem}

\section{Quotients of vector spaces by quasi-linear actions}
\label{quotients of vector spaces}
 
 \noindent
The aim of this section is to show a version of the following proposition in the case of wild group actions.
This proposition was proved in \cite[Lemma~1.1]{MR2642161}
as a generalization of \cite[Lemma 5.1]{Loo}.

 \begin{prop}\cite[Lemma~1.1]{MR2642161}\label{lemeh}
  Let $G$ be a finite abelian group with quotient $G\to \Gamma$.
  Let $k$ be a field of characteristic zero, or positive characteristic prime to $\lvert G\rvert$,
  and let $K/k$ be a Galois extension with Galois group $\Gamma$.
  Assume that the Galois action of $\Gamma$ on $K$ lifts to a $k$-linear action of $G$
  on a finite dimensional $K$-vector space $V$.
  If all $\lvert G\rvert$-th roots of unity lie in $k$, then
  \[
   [V/G]=\mathbb{L}_k^{\Dim_KV}\in K_0(Var_k).
  \]
 \end{prop}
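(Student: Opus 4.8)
The plan is to induct on $d:=\Dim_K V$, the base case $d=0$ (where $V=\Spec K$ and $V/G=\Spec k$) being trivial, and to prove the inductive statement for all data $(G,K/k,G\twoheadrightarrow\Gamma,V)$ simultaneously. The key structural input is that $V$ decomposes $G$-equivariantly into $K$-lines. Put $N:=\Ker(G\to\Gamma)$; since $\bar g=1$ for $g\in N$, the subgroup $N$ acts $K$-linearly on $V$, and as $N$ is finite abelian with $\mu_{\lvert N\rvert}\subseteq k\subseteq K$, the space $V$ is the direct sum of its $N$-eigenspaces $V_\chi$. Each $V_\chi$ is $G$-stable, because for $g\in G$, $n\in N$, $v\in V_\chi$ one computes $n(gv)=g(nv)=\chi(n)\,gv$, using that $G$ is abelian and that $\chi(n)\in k=K^\Gamma$ is fixed by $g$. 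On each $V_\chi$ the group $N$ acts by a single scalar, so a Maschke-type averaging (available since $\Char k\nmid\lvert G\rvert$) together with Galois descent for the residual $\Gamma$-action splits $V_\chi$, hence $V$, into $G$-stable $K$-lines. In particular there is a $G$-equivariant identification $V=V'\times_{\Spec K}\mathbb{A}^1_K$ with $\Dim_K V'=d-1$.

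Before the induction I would settle the one-dimensional case in the form $[\mathbb{A}^1_K/G]=\mathbb{L}_k$, for $G$ acting on $\mathbb{A}^1_K=\Spec K[t]$ by $g(t)=\gamma(g)t$ with $\gamma$ a $1$-cocycle over the $\Gamma$-action on $K^\ast$. One quotients first by $N$: if $e$ is the order of $\gamma\rvert_N$, then $K[t]^N=K[u]$ with $u=t^e$, and the residual $\Gamma$-action rescales $u$ by the cocycle $\gamma^e$, which is inflated from $\Gamma$ and hence a coboundary by $H^1(\Gamma,K^\ast)=0$; after replacing $u$ by a suitable $K^\ast$-multiple $u'$ the $\Gamma$-action on $K[u']$ is just the Galois action on $K$, so $\mathbb{A}^1_K/G=\Spec(K[u']^\Gamma)=\Spec(k[u'])=\mathbb{A}^1_k$.

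For the inductive step I would take the projection $p\colon V=V'\times_{\Spec K}\mathbb{A}^1_K\to\mathbb{A}^1_K$, which is $G$-equivariant and of finite type; here $S=\Spec k$ carries the trivial action, which is tame since $\Char k\nmid\lvert G\rvert$, so $V,\mathbb{A}^1_K\in(\Sch_{G,\Spec k})$. Let $\bar p\colon V/G\to\mathbb{A}^1_K/G\cong\mathbb{A}^1_k$ be the induced map. Given $x\in\mathbb{A}^1_K/G$ with residue field $F_x$, choose a preimage $b\in\mathbb{A}^1_K$ with stabilizer $G_b$ (normal, as $G$ is abelian); Remark~\ref{rem fiber tame} gives $[\bar p^{-1}(x)]=[p^{-1}(b)/G_b]$ in $K_0(\Var_{F_x})$. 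Now $p^{-1}(b)\cong\mathbb{A}^{d-1}_{\kappa(b)}$ is a $\kappa(b)$-vector space of dimension $d-1$ on which $G_b$ acts semilinearly over the induced $G_b$-action on $\kappa(b)$; by tameness (Lemma~\ref{rem tame decomposition} and the results of \cite{MR782297} used there) $\kappa(b)/F_x$ is Galois with Galois group the image $\Gamma_b$ of $G_b$, and $F_x\supseteq k$ contains all $\lvert G_b\rvert$-th roots of unity while $\Char F_x\nmid\lvert G_b\rvert$. Hence the inductive hypothesis applies to $(G_b,\kappa(b)/F_x,G_b\twoheadrightarrow\Gamma_b,p^{-1}(b))$ and gives $[\bar p^{-1}(x)]=\mathbb{L}_{F_x}^{\,d-1}$ for every $x$. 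Spreading out over the Noetherian base $\mathbb{A}^1_K/G\cong\mathbb{A}^1_k$ (together with Noetherian induction on the finitely many remaining closed points) then yields $[V/G]=\mathbb{L}_k^{\,d-1}\cdot[\mathbb{A}^1_K/G]=\mathbb{L}_k^{\,d}$.

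I expect the main obstacle to be the bookkeeping in the inductive step: verifying that $p^{-1}(b)/G_b$ is genuinely a new instance of the proposition, above all that $\kappa(b)/F_x$ is Galois with Galois group exactly the image of $G_b$. This is the point where tameness is indispensable — via the control of inseparability in Lemma~\ref{rem tame decomposition} and Remark~\ref{rem fiber tame} — and is exactly what forces the wild analogue (Proposition~\ref{lemwildandtame}) into $K_0^{\text{mod}}$. One must also check that $\mu_{\lvert G\rvert}\subseteq k$ and $\Char k\nmid\lvert G\rvert$ pass to all subquotients, and the two applications of Hilbert~90 and the eigenspace/semisimplicity splitting are the other places where precisely these hypotheses are used; this is routine but needs care. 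One could instead run the eigenspace argument of \cite[Lemma~1.1]{MR2642161} directly, but the route above is the one that adapts to the wild case.
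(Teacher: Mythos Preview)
Your argument is correct and complete in outline, but it is not the proof the paper invokes for this statement. The paper does not prove Proposition~\ref{lemeh} itself; it cites \cite[Lemma~1.1]{MR2642161}, whose proof follows Looijenga's eigenspace stratification of $V$ (decompose $V$ into $k$-eigenspaces for the abelian $k$-linear $G$-action, stratify by which eigencoordinates vanish, and compute each stratum directly). What you have written is instead precisely the fibration/induction/spreading-out method that the paper develops for the \emph{wild} analogue, Proposition~\ref{lemwildandtame}: project $G$-equivariantly to a one-dimensional $K$-space, settle $d=1$ by hand, identify the fibers of $\varphi_G$ via Lemma~\ref{lemma vezel}/Remark~\ref{rem fiber tame} as lower-dimensional instances of the same problem, and conclude by spreading out over $\mathbb{A}^1_K/G\cong\mathbb{A}^1_k$. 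So you have, in effect, specialized the paper's wild-case argument back to the tame situation. Both routes work here; the stratification is shorter in the tame case, while yours is the one that survives when eigenspace decompositions fail.

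One step deserves tightening. On $V_\chi$ with $\chi\neq 1$ there is no ``residual $\Gamma$-action'': $N$ acts by the nontrivial scalar $\chi$, so the $G$-action does not factor through $\Gamma$ and Galois descent does not apply directly. The fix is either to twist first---extend $\chi$ to a character $\tilde\chi\colon G\to k^\ast$ (possible since $\widehat G\twoheadrightarrow\widehat N$ for finite abelian groups), so that $V_\chi\otimes\tilde\chi^{-1}$ carries a genuine $\Gamma$-semilinear structure and descends to $k$---or, more simply, to do what the paper does in the proof of Proposition~\ref{lemwildandtame}: diagonalize the $k$-linear $G$-action on $V$, observe that for any $k$-eigenvector $v$ the $K$-span $Kv$ is $G$-stable (since $g(av)=\bar g(a)\psi(g)v\in Kv$), and then extract a $K$-basis from the $k$-eigenbasis. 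Either way the splitting into $G$-stable $K$-lines, and hence your $G$-equivariant projection $V\to\mathbb{A}^1_K$, is justified.
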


 \noindent
 In \cite[Lemma~1.1]{MR2642161} this proposition was only stated for characteristic zero.
 Going through the proof, one recognizes
 that the only assumptions on $k$ which are used are that the characters of $G$
 are $k$-rational
 and that $\lvert G\rvert$ is prime to the characteristic of $k$,
 and hence for every representation of $G$ on a $k$-vector space $V$, there is a decomposition of $V$ into eigenspaces over $k$.
 This is not true if the characteristic of $k$ divides $\lvert G \rvert$,
 even if $k$ is algebraically closed.
Note furthermore that if the action of $G$ is tame,
we can decompose in the usual Grothendieck ring of varieties,
see Lemma~\ref{rem tame decomposition}.
Hence the proposition really holds in $K_0 (\Var_k)$, also if the characteristic of $k$ is positive but prime to the order of $G$.
 
 Working with wild actions,
 we cannot decompose $V$ into eigenspaces, so
 we will not be able to use the stratification of $V$ from \cite[Lemma~5.1]{Loo} as done in \cite[Lemma~1.1]{MR2642161}.
 Instead we will first show the claim for the case $\Dim_K V=1$ with elementary methods,
 and then use a $G$-equivariant fibration $\varphi: V\to W$ to a vector space $W$ of dimension $1$ over $K$
 to conclude by induction.
 More precisely,
 we compute the classes of the fibers of the induced map $\varphi_G: V/G\to W/G$ separately
 using the induction assumption and Lemma \ref{lemma vezel},
 and then we use spreading out, see Lemma~\ref{spreading out}.
 To be able to use Lemma \ref{lemma vezel} we need to work in the 
 modified Grothendieck ring of varieties.
 
\begin{prop}\label{lemwildandtame}
  Let $G$ be a finite abelian group with quotient $G\to \Gamma$.
  Let $k$ be a field of characteristic $p$,
  let $q$ be the greatest divisor of $\lvert G\rvert$ prime to $p$, 
  and let $K/k$ be a Galois extension with Galois group $\Gamma$.
  Assume that the Galois action on $K$ lifts to a $k$-linear action of $G$
  on a finite dimensional $K$-vector space $V$.
  If $k$ contains all $q$-th roots of unity, then
  \[
   [V/G]=\mathbb{L}_k^{\Dim_KV}\in K_0^{\text{mod}}(Var_k).
  \]
 \end{prop}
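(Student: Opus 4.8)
The plan is to reduce to the one-dimensional case by induction on $\Dim_K V$ and use spreading out, following the strategy announced before the statement. First I would treat the base case $\Dim_K V=1$ by elementary means: here $V\cong K$ as a $K$-vector space, and the $G$-action is quasi-linear over the Galois action on $K$, so for $g\in G$ we have $g(v)=\chi(g)\,\sigma_g(v)$ for a suitable cocycle $\chi(g)\in K^\times$, where $\sigma_g\in\Gamma$ is the image of $g$. One can then stratify $V$ into the origin $\{0\}$, whose quotient is a point, and $V\setminus\{0\}$. On $V\setminus\{0\}$ one analyzes the action of the wild part (the $p$-Sylow subgroup $P\subset G$, where $p=\Char k$): since $\lvert G\rvert/q$ is a power of $p$, $G/P\cong\Gamma'$ has order prime to $p$, and one handles the prime-to-$p$ part exactly as in \cite[Lemma~1.1]{MR2642161} — decomposing into eigenspaces over $k$, which is legitimate because $k$ contains all $q$-th roots of unity — while the wild part $P$ acts on each piece through translations on an appropriate coordinate (using Remark~\ref{Vb translation} type of analysis: an element of order $p$ acting $k$-linearly and lifting a trivial or order-$p$ Galois element is, after the quasi-linear normalization, an affine-linear unipotent map), so the quotient by $P$ is again an affine space by the argument in Example~\ref{exinsep} / Lemma~\ref{closed mod}. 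Assembling, $[V/G]=\LL_k\in K_0^{\text{mod}}(\Var_k)$.

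For the inductive step, assume the result for all dimensions less than $n=\Dim_KV$. Choose a $G$-equivariant surjection $\varphi\colon V\to W$ onto a $K$-vector space $W$ of dimension $1$ — such a $\varphi$ exists because $V^\vee$ is a nonzero $K[G]$-module and one may project onto a quotient of $K$-dimension one (e.g. quotient by a $K[G]$-submodule of codimension one, which exists since $K$ is one-dimensional and any nonzero $K$-subspace stable under the finite group can be cut down; more directly, pick any nonzero $K$-linear functional and average, or use that a $K[\Gamma]$-module surjects onto a simple quotient). The kernel $V'=\Ker\varphi$ is $G$-invariant of $K$-dimension $n-1$. Let $\varphi_G\colon V/G\to W/G$ be the induced map. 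By Lemma~\ref{lemma vezel}, for the generic point $x$ of $W/G$ with residue field $\kappa$ (and any $w\in W$ over $x$ with stabilizer $G_w\trianglelefteq G$, which is automatic since $G$ is abelian), $[\varphi_G^{-1}(x)]=[\varphi^{-1}(w)/G_w]\in K_0^{\text{mod}}(\Var_\kappa)$. Now $\varphi^{-1}(w)$ is a torsor under $V'\otimes_K\kappa$ under $G_w$, and after translating to the origin (which is possible over $\kappa$: the argument of Remark~\ref{Vb translation}, or a direct cohomology-vanishing argument since $\mathbb{G}_a$-torsors are trivial) the $G_w$-action on $V'\otimes_K\kappa$ is quasi-linear over the Galois action of the image $\Gamma_w$ of $G_w$ in $\Gal(K\kappa/\kappa)$; applying the induction hypothesis over $\kappa$ — valid because $\kappa\supset k$ still contains all $q$-th roots of unity and the relevant group $G_w$ has order dividing $\lvert G\rvert$ — gives $[\varphi_G^{-1}(x)]=\LL_\kappa^{n-1}$. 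Handling the non-generic fibers the same way (each special point of $W/G$ also has all fibers of class a power of $\LL$), and spreading out over $W/G$ via Lemma~\ref{spreading out} together with the base case computation $[W/G]=\LL_k$, one obtains $[V/G]=\LL_k\cdot\LL_k^{\,n-1}=\LL_k^{\,n}$ in $K_0^{\text{mod}}(\Var_k)$.

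The main obstacle I expect is the "spreading out" / fibration argument: one must show that $[V/G]=\sum$ (over a stratification of $W/G$) of $[\varphi_G^{-1}(\text{stratum})]$ with each term behaving like $\LL^{\dim}\times[\text{stratum}]$, and that the pointwise computation "$[\varphi_G^{-1}(x)]=\LL_{\kappa(x)}^{n-1}$ for every $x$" actually globalizes to $[V/G]=\LL^{n-1}\cdot[W/G]$. This requires a constructibility argument — the locus where the fiber class equals $\LL^{n-1}$ is constructible — combined with Noetherian induction on $W/G$ and the spreading-out isomorphism of Lemma~\ref{spreading out} to pass from the generic point to an open piece; the subtlety is that Lemma~\ref{lemma vezel} is stated at a single (generic) point, so one applies it repeatedly to the closures of the strata. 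A secondary technical point is verifying that after restricting to a fiber and translating to the origin, the induced action is genuinely quasi-linear in the sense required by the induction hypothesis (i.e. that the "translation part" vanishes after a suitable choice of origin over the residue field); this is exactly the content of Remark~\ref{Vb translation}, applied fiberwise, so it should go through, but it must be stated carefully because in the wild case one cannot in general kill translations — here one uses that $W$ (and hence the relevant coordinate) has been split off, so the remaining action on $V'$ is the one governed by the lifted linear data of Remark~\ref{Vb}.
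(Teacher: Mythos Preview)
Your overall strategy matches the paper's: induction on $\Dim_K V$, project to a one-dimensional quotient, compute fibers of $\varphi_G$ via Lemma~\ref{lemma vezel}, and spread out. But there is a genuine gap in the inductive step, and it is exactly the point you flag as a ``secondary technical point'' and then dismiss.

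The fiber $\varphi^{-1}(w)$ over a general $w\in W$ is an affine translate of $V'=\Ker\varphi$, not $V'$ itself. The $G_w$-action on this fiber is quasi-linear \emph{plus a translation}, as in Remark~\ref{Vb translation}. To ``translate to the origin'' and reduce to a quasi-linear action you would need a $G_w$-fixed point in $\varphi^{-1}(w)$; Remark~\ref{tame translation} supplies one for the prime-to-$p$ part of $G_w$, but explicitly fails for the $p$-part (see Example~\ref{exwildtranslation}). So the action you feed back into the induction hypothesis is \emph{not} of the form covered by the proposition as stated. The paper handles this by first putting the $G$-action in an explicit triangular form (the $\alpha_l$ upper-triangular unipotent, the $\beta_l$ diagonal) and then proving by induction a strictly stronger statement, namely Equation~(\ref{translation}), which allows an affine translation term $a_{li}$ in each $\alpha_l$. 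With that strengthened hypothesis the fiber computation goes through because the action on $\varphi^{-1}(w)\cong\Spec(\kappa_w[x_2,\dots,x_d])$ is again of the form~(\ref{translation}), with the new constant term $\tilde a_{li1}\bar x_1+\tilde a_{li}$. Your induction, as written, does not close.

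Two smaller points. First, your existence argument for a $G$-equivariant surjection $V\to W$ with $\Dim_K W=1$ is not valid in the wild case: averaging a functional divides by $\lvert G\rvert$, and a simple $K[G]$-quotient need not be one-dimensional. The paper obtains this projection for free from the triangular basis (project onto the coordinate $x_1$), and that same basis is what guarantees the fiber action has the required shape. Second, your base case $d=1$ is much too sketchy: the paper's argument is a separate induction on the $p$-exponent $r$, using Artin--Schreier theory to produce an explicit generator $y$ of $K[x_1]^{G'}$ for a subgroup $G'$ of order $p$, and then a delicate coordinate change to check that the residual $G/G'$-action on $K'[y]$ is again of the form~(\ref{translation}). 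Stratifying into $\{0\}$ and $V\setminus\{0\}$ does not by itself produce this.
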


\begin{proof}
As $G$ is a finite abelian group, we have that
\[
 G\cong \mathbb{Z}/p^{r_1}\mathbb{Z}\times ... \times \mathbb{Z}/p^{r_s}\mathbb{Z}\times \mathbb{Z}/q_1\mathbb{Z}\times \dots \times \mathbb{Z}/q_t\mathbb{Z}
\]
with $p$ the characteristic of $k$, and $q_i$ prime to $p$.
Set $r:=\sum r_i$ and $q:=\prod q_i$.
Note that $\lvert G\rvert=p^rq$, and $q$ is prime to $p$.
 Set $d:=\Dim_KV$.
Then we have 
 $V\cong \Spec(K[x_1,\dots,x_d])$ as schemes.
  The $G$-action on $V$ is given by $\alpha_1,\dots, \alpha_s$ in $\Aut_k(K[x_1,\dots,x_d])$ with $\alpha_l^{p^{r_l}}=\Id$,
and $\beta_1,\dots, \beta_t$ in $\Aut_k(K[x_1,\dots,x_d])$ with $\beta_l^{q_l}=\Id$,
  such that
  $\alpha_l\rvert _K$ and $\beta_l\rvert_K$ generate the Galois action on $K$.
  As $G$ is abelian, the $\alpha_l$ and $\beta_l$ commute.
  
  View $V$ as a vector space over $k$. 
  The $G$-action is given by $A_l,B_l\in GL_k(V)$ by assumption.
  Moreover,
  $A_l ^{p^{r_l}}=\Id$ and $B_l^{q_l}=\Id$,
  and the $A_l$ and $B_l$ commute.
  Note that, as the order of the $B_l$ is finite of rang prime to $p$, the $B_l$ are diagonalizable over $\bar{k}$.
  As $B_l^{q_l}=\Id$, all eigenvalues are $q_l$-th roots of unity, and as $q_l$ divides $q$,
  all those eigenvalues are already in $k$ by assumption.
  Hence $B_l$ is already diagonalizable over $k$.
  As the $B_l$ commute, we find a basis of $V$ of common eigenvectors of all the $B_l$.
 
 Now consider the $A_l$.
 Let $E$ be any intersection of eigenspaces of the $B_l$.
 As the $A_l$ commute with the $B_l$,
 $A_l(E)=E$ for all $l$.
 Recall that $A_l^{p^{r_l}}=\Id$,
 hence all eigenvalues of $A_l$ are $p^{r_l}$-th roots of unity,
   and as $\Char(k)=p$, all the eigenvalues are $1$, i.e., in particular in $k$.
  So we find a $k$-basis of $E$
  such that $A_l$ has upper triangle form with only $1$ on the diagonal.
  As the $A_l$ commute, we can even find a $k$-basis of $E$ such that all the $A_l$ have upper triangle form.
  We can do this for all intersections of eigenspaces of the $B_l$, hence we 
 get a $k$-basis $B:=\{v_1,\dots, v_s\}$ of $V$ such that all $A_l$ have upper triangle form with only $1$ on the diagonal,
 and $B$ consist only of eigenvectors of the $B_l$.
 
  Consider the subset of $B$ containing
  those $v_i$ which do not lie in the 
  sub-$K$-vector space of $V$ spanned by the $v_j$ with $j<i$.
  This way we get a basis $B'=\{w_1,\dots,w_d\}$ of $V$ as $K$-vector space
  such that $A_l(w_i)=w_i + \sum_{j<i}a_{lij}w_i$ for some $a_{lij}\in K$,
  and $B_l(w_i)=\mu_{li}w_i$ for some $q_l$-th roots of unity $\mu_{li}$.
Hence we may assume - after a change of coordinates - that
  \[
  \alpha_l(x_i)=x_i+\sum_{j<i}a_{lij}x_j\text{ and }   \beta_l(x_i)=\mu_{li}x_i
  \]
  for some $a_{lij}\in K$, and
  some $q_l$-th roots of unity $\mu_{li}$.
  We need to show that
    \begin{equation}\label{claim}    
   [V/G]=[\Spec(K[x_1,\dots,x_d]^G)]=\mathbb{L}_k^d\in K_0^{\text{mod}}(Var_k).
    \end{equation}
We will show a slightly more general statement by induction on $d$,
we namely only ask that
$\alpha_l\rvert_K$ and $\beta_l\lvert_K$ generate the action of $\Gamma =\Gal(K,k)$,
and that
  \begin{equation}\label{translation}
  \alpha_l(x_i)=x_i+\sum_{j<i}a_{lij}x_j+a_{li} \text { and } \beta_l(x_i)=\mu_{li}x_i
  \end{equation}
  for some $a_{lij}\in K$ and $a_{li}\in K$, and some $q_l$-th roots of unity.
  
  \medskip
  
  Start with $d=1$. Hence $V\cong\Spec(K[x_1])$, $\alpha_l(x_1)=x_1+a_l$ for some $a_l\in K$,
  and $\beta_l(x_1)=\mu_lx_1$ for some $q_l$-th root of unity $\mu_l$.
  We will show the claim  for $d=1$ by jet another induction, this time on $r$.
  If $r=0$, $\lvert G\rvert$ is prime to $p$, so by \cite[Lemma~1.1]{MR2642161}  the claim is true.
  Assume hence that the claim holds for $r-1$. 
  
  Let ${G}':=\mathbb{Z}/p\mathbb{Z}\times \{0\} \times \dots \times \{0\}\subset G$.
  Note that $G'\subset G$ is a normal subgroup of order $p$ and acts on $V$.
  The action is generated by $\gamma=\alpha_1^{p^{r_1-1}},$
  and hence $\gamma(x_1)=x_1+b$ for some $b\in K$.
  Note that $K$ is a Galois extension of ${K}':=K^{{G}'}$ with Galois group generated by $\gamma\rvert_K$.
  
  If $b=0$, $K[x_1]^{G'}=K^{G'}[x_1]=K'[x_1]$.
  If $b\neq 0$ and $\alpha\rvert_K=\Id$, i.e., $K'=K$, then
  \[
  K[x_1]^{G'}= K[x_1 +(p-1)b^{1-p}x_1^p]\cong K'[y].
  \]
  We now may now assume that $K/K'$ is a field extension of degree $p$.
  As the characteristic of $K$ is equal to $p$,
  $K/K'$ is an Arthin-Schreier extension,
 thus $K=K'(\omega)$ for some $\omega$ in $K$,
  and $\gamma(\omega)=\omega+1$.
  Hence $\omega^2$ is mapped to $\omega^2 + 2\omega + 1$ and similarly for higher powers of $\omega$.
  As $\gamma$ is a linear map on the $K'$-vector space $K$ of degree $p$,
  we find a basis $\{1,\omega,v_3,\dots, v_{p} \}$ of $K$ over $K'$
  such that $\gamma(v_i)=v_i+v_{i-1}$ for $i>3$, and $\gamma(v_3)=v_3+\omega$.
  We can write $b$ in this basis, i.e., we have
  \begin{align*}
  b&=b_1 +b_2\omega + \sum_{i=3}^{p} b_iv_i\\
 \shortintertext
  {for some $b_i\in K'$.
  Set} 
  y&:=x_1 - b' \text{ with } b':=b_1 \omega + \sum _{i=2}^{p-1} b_i v_{i+1}.
  \end{align*}
  We have $\gamma(y)= y + b_{p}v_{p}$, and $K[x_1]\cong K[y]$.
  Using that the characteristic of $K$ is $p$, we get that $y =\gamma^p(y)=y+b_p$,
  hence $b_p=0$. 
  Therefore 
  \[
  K[x_1]^{G'}\cong K[y]^{G'}= K^{G'}[y]= K'[y].
  \]
  Now $H:=G/{G'}$ acts on $K[x_1]^{{G'}}$, which is isomorphic to ${K'}[y]$ for some $y$ as we have seen,
  and $(K[x_1]^{{G}'})^{H}=K[x_1]^G$.
  The action is given by $\alpha_l'=\alpha_l\rvert_{K'[y]}$,
  and $\beta_l'=\beta_l\rvert_{K'[y]}$.
  For simplicity we write $\alpha_l$ and $\beta_l$ also for $\alpha_l'$ and $\beta_l'$.
  Note that ${K}'$ is a Galois extension of $k$ and the Galois action is generated by $\alpha_l\rvert_{{K}'}$ and $\beta_l\rvert_{K'}$.
  In order to use the induction assumption, we still have to show
  that, maybe after some coordinate change, the $\alpha_l$ and $\beta_l$
  are given as in Equation (\ref{translation}).
  

If $y=x_1$, there is nothing to show.
Let $y=x_1 +\tilde{b}x_1^p$ with $\tilde{b}:=b^{1-p}(p-1)\in K$.
Recall that $\alpha_l(x_1)=x_1+a_l$ for some $a_l\in K$.
As $\alpha_l$ commutes with $\gamma$,
we get that $\alpha_l(b)+a_l=b+ \gamma(a_l)$.
As $\gamma\rvert_K=\Id$ in this particular case, it follows that $\alpha_l(b)=b$.
As $p-1\in k$,
we get that
$\alpha_l(\tilde{b})=\alpha_l(b)^{p-1}\alpha_l(p-1)=\tilde{b}$,
and hence
\[
 \alpha_l(y)=\alpha_l(x_1+\tilde{b}x_1 ^p)=x_1+\tilde{b}x_1^p+ a_l+\tilde{b}a_l^p=y+{a}_l'
\]
with ${a}_l'=a_l+\tilde{b}a_l^p\in K$.
Now recall that $\beta_l(x_1)=\mu_lx_1$ for some $q_l$-th root of unity $\mu_l$.
As $\beta_l$ and $\gamma$ commute,
$\beta_l(b)=\mu_lb$,
thus $\beta_l(\tilde{b})=\mu_l^{1-p}b^{1-p}(p-1)=\mu_l^{1-p}\tilde{b}$.
Hence
\[
 \beta_l(y)=\beta_l(x_1+\tilde{b}x_1^p)=\mu_lx_1+\mu_l^{1-p}\tilde{b}\mu_l^px_1^p=\mu_lx_1+\mu_l\tilde{b}x_1^p=\mu_ly.
\]
Consider now the case $y=x_1 -b'$ with $b'$ as above.
View $\beta_l$ and $\gamma=\alpha_1^{p^{r_1-1}}$ again as morphism of $K[x_1]$.
Recall that $\beta_l(x_1)=\mu_lx_1$ and $\gamma(x_1)=x_1+b$.
By assumption all the $\beta_l$ and $\gamma$ commute pairwise.
Hence
\[
 \mu_lx_1+\beta_l(b)=\beta_l(x_1+b)=\beta_l ( \gamma(x_1)) =\gamma( \beta_l(x_1))=\gamma (\mu_lx_1)=\mu_lx_1+\mu_lb,
\]
so $\beta_l(b)=\mu_lb$.
Note that the $\beta_l\rvert_K$ are $k$-linear maps of the $k$-vector space $K$.
As $\beta_l ^{q_l}=\Id$ and all the $q_l$-th roots of unity lie in $k$ by assumption,
there is a basis of eigenvectors of $\beta_l$ of $K$ over $k$.
As the $\beta_l$ commute with each other,
we can even find a common basis of eigenvectors of all the $\beta_l$.
Hence,
using that $b'\in K$,
we can write $b'=\sum b_{i}'$
with $b_i' \in K$, and
$\beta_l(b_{i}')=\mu_{li}b_{i}'$ for some $q_l$-th root of unity $\mu_{li}$.
Moreover we assume that if for all $l$ we have that $\mu_{li}=\mu_{lj}$, then $i=j$.
Without loss of generality we may assume that 
$\mu_{l1}=\mu_l$ for all $l$.
As $\gamma(K)\subset K$, and $b_i'\in K$,
$\gamma(b_{i}')=b_{i}'+\bar{b}_{i}$ for some $\bar{b}_{i}\in K$.
Using again that the $\beta_l$ and $\gamma$ commute,
we get that $\mu_{li}b_{i}' +\mu_{li}\bar{b}_{i}=\mu_{li}\bar{b}_{i}'+\beta_l(\bar{b}_{i})$,
i.e., $\beta_l(\bar{b}_{i})=\mu_{li}\bar{b}_{i}$ for all $l$.
In particular the $b_i$ which are not zero are linear independent.
As $\gamma (b')=b' +b$, we get that $b=\sum \bar{b}_i$.
Hence for all $l$ we have
\[
0 = \beta_l(b)-\mu_lb=\sum \beta_l(\bar{b}_i) -\sum \mu_{l}\bar{b}_{i}= \sum  (\mu_{li}\bar{b}_{i}-\mu_l \bar{b}_i)=\sum(\mu_{li}-\mu_{l})\bar{b}_i
\]
Hence $\bar{b}_{i}=0$ if $\mu_{li}\neq \mu_l$ for at least one $l$.
In particular this implies that $\tilde{b}:=\sum_{i\neq 1}b_i'$
lies in $K ^{G'}=K'$.
Set $\tilde{y}:=y+\tilde{b}=x_1-b_1'$.
It follows that
$K'[y]=K'[\tilde{y}]$, and
that $\beta_l(\tilde{y})=\beta_l(x_1-b_1')=\mu_lx_1-\mu_lb_1'=\mu_l\tilde{y}$.
Moreover $\alpha_l(\tilde{y})=\tilde{y}+(a_l - \alpha_l(b_1')+b_1')$,
and $a_l':=a_l - \alpha_l(b_1')+b_1'\in K'$.

 So all together we may assume that
 $K[x_1]^{G'}=K'[y]$ and
 $\alpha_l(y)=y+a_l'$ for some $a_l'\in K'$,
 and $\beta_l(y)=\mu_ly$.
 Hence we can use the induction assumption for the $H$-action on $K'[y]$.
This proves Equation~(\ref{claim}) for $d=1$.
 
 \medskip
 
 Now assume that the claim holds for $d-1$.
 Look at $V=\Spec(K[x_1,\dots,x_d])$ with a $G$-action as in Equation (\ref{translation}).
 Note that the inclusion map $K[x_1]\hookrightarrow K[x_1,\dots, x_d]$ is $G$-equivariant,
 if $G$ acts on $K[x_1]$ generated by $\alpha_l'$ and $\beta_l' $ such that the $\alpha_l'\rvert_K$ and $\beta_l' \rvert_K$ generate the Galois action on $K$,
 and $\alpha_l'(x_1)=x_1+a_{l1}$ and $\beta_l'(x_1)=\mu_{l1}x_1$.
 To simplify notation we will use $\alpha_l$ and $\beta_l$ also for $\alpha_l'$ and $\beta_l'$.
 Set $W:=\Spec(K[x_1])$,
denote by $\varphi$ the $G$-equivariant map from $V$ to $W$,
and let $\varphi_G: V/G\to W/G$ be the induced map between the quotients.

Let $x \in W/G$ be any point with residue field $\kappa_x$, and
let $w\in W$ be a point with residue field $\kappa_{w}$ in the inverse image of $x$ under the quotient map ${\pi: W\to W/G}$.
Let $G_w\subset G$ be the stabilizer of $w$.
We get an induced action of $G_w$ on $V$ and $W$.
Note that the action of $G_w$ on $V$ is generated by 
$\tilde{\alpha}_l =\alpha_l^{s_l}$ and $\tilde{\beta}_l=\beta_l^ {t_l}$ for some $s_l>0$ and some $t_l>0$, hence we have
  \[
  \tilde{\alpha}_l(x_i)=x_i+\sum_{j<i}\tilde{a}_{lij}x_j+\tilde{a}_{li} \text{ and } \tilde{\beta}_l(x_i)=\tilde{\mu}_{li}x_i
  \]
  for some $\tilde{a}_{lij}\in K$ and $\tilde{a}_{li}\in K$, and
with $\tilde{\mu}_{li}=\mu_{li}^ {t_i}$.

By construction, $w$ is fixed under this action of $G_w$,
and therefore we have that $\varphi^{-1}(w)\cong \Spec(\kappa_{w}[x_2,\dots,x_d])\subset V$
is $G_w$-invariant.
The action is given by $\gamma_l,\delta_l\in \Aut_k(\kappa_{w}[x_2,\dots,x_d])$ with
  \[
   \gamma_l(x_i)=x_i+\sum_{j<i, i\neq 1}\tilde{a}_{lij}x_j + (\tilde{a}_{li1}\bar{x}_1 + \tilde{a}_{li})\text{ and }
  \delta_l(x_i)=\tilde{\mu}_{li}x_i .
  \]
  Here $\bar{x}_1$ denotes the image of $x_1$ in $\kappa_{w}$.
  Note that $\tilde{a}_{li1}\bar{x}_1 + \tilde{a}_{li}\in \kappa_{w}$.
  Moreover $\kappa_{w}$ is a Galois extension of $\tilde{\kappa}_x :=\kappa_{w}^{G_w}$,
  and $\gamma_l\rvert_{\kappa_{w}}$ and $\delta_l\rvert_{\kappa_{w}}$ generate the Galois action.
  All together we can use the induction assumption and get that
  \[
  [\varphi^{-1}(w)/G_w]=[\Spec(\kappa_{w}[x_2,\dots,x_d]^{G_w})]= \mathbb{L}^{d-1}_{\tilde{\kappa}_x}\in K_0^{\text{mod}}(\Var_{\tilde{\kappa}_x}).
  \]
  As $\tilde{\kappa}_x$ is purely inseparable over $\kappa_{x}$ by \cite[Capitre V.2, Th\'eor\`eme 2]{MR782297}, and hence there is a universal homeomorphism $f: \mathbb{A}^{d-1}_{\tilde{\kappa}_x}\to \mathbb{A}^{d-1}_{\kappa_x}$,
  we get that
$\mathbb{L}^{d-1}_{\tilde{\kappa}_x}=\mathbb{L}^{d-1}_{\kappa_x}$ in $K_0^{\text{mod}}(\Var_{\kappa_x})$.
As $G$ is abelian and thus $G_w \subset G$ is a normal subgroup, we can use Lemma \ref{lemma vezel} to get that
 \begin{align}\label{punten}
[\varphi_G^{-1}(x)]=[\varphi^{-1}(w)/G_w]=\mathbb{L}_{\kappa_x}^{d-1}\in K_0^{\text{mod}}(\Var_{\kappa_\eta}).
 \end{align}
Now let $\eta\in W/G$ be the generic point with residue field $\kappa_\eta$.
By Lemma \ref{spreading out} there is an isomorphism
\[
\lim_{\stackrel{\longleftarrow}{\kappa_\eta \in U \subset W/G}}\!\!\!\!
K_0^{\text{mod}}(\Var_U)\to K_0^{\text{mod}}(\Var_{\kappa_\eta}).
\]
Hence using Equation (\ref{punten}) for $\eta$ there is a nonempty open $U\subset W/G$ such that
\[
[ \varphi_G^ {-1}(U)]=\mathbb{L}^ {d-1}_{U}\in K_0^{\text{mod}}(\Var_{U}).
\]
As the separated map $U\to \Spec(k)$ induces a forgetful map from $K_0^{\text{mod}}(\Var_U)$ to $K_0^{\text{mod}}(\Var_{k})$,
we have that $[\varphi_g^ {-1}(U)]=\mathbb{L}^ {d-1}_{k}[U]\in K_0^{\text{mod}}(\Var_{k})$.
Note that $W/G\setminus U$ consist of finitely many points $P_i$.
We already know that $[\varphi_G^{-1}(P_i)]=\mathbb{L}^d_k[P_i]$ in $K_0^{\text{mod}}(\Var_k)$,
see Equation (\ref{punten}).
Using the scissors relation in $K_0^{\text{mod}}(\Var_{S/G})$,
we get that 
\begin{align*}
V/G& =[\varphi_G^{-1}(W/G)]=[\varphi_G^{-1}(U)]+\sum [\varphi_G^{-1}(P_i)]\\
& =\mathbb{L}_k^{d-1}[U]+\sum \mathbb{L}_k^{d-1}[P_i]
=\mathbb{L}^ {d-1}_{k}[W/G]=\mathbb{L}_k^{d-1}
\mathbb{L}^{}_k=\mathbb{L}^d_k\in K_0^{\text{mod}}(\Var_{k}). 
\end{align*}
Here we used the induction assumption again to get that $[W/G]=\mathbb{L}_k\in K_0^{\text{mod}}(\Var_k)$.
 Claim (\ref{claim}) now follows by induction.
 \end{proof}
 
 \begin{rem}
  Consider $V=X=\Spec(k[x,y])$ from Example \ref{exinsep}.
  Then we get a $G$-equivariant map $\varphi: V\to W=\Spec(k[y])$,
  where we consider the trivial action on $W$,
  and hence a map $\varphi_g: V/G\to W/G=W$.
  We have already shown that the induced map between
  \[
  \varphi^{-1}(0)/G=\Spec(k[x])\to \varphi_G^{-1}(0)=\Spec(k[x^p+(p-1)xy^{p-1}])
  \]
  is purely inseparable,
  but $\varphi^{-1}(0)/G$ and $ \varphi_G^{-1}(0)$ are isomorphic over $k$,
  hence they have the same class in $K_0(\Var_k)$.
  This suggests that Proposition \ref{lemwildandtame} maybe also holds in the usual Grothendieck ring,
  at least if we assume that $k$ is a finite field.
  But as the isomorphism between the fibers is not given in a canonical way,
  I do not know how to do this in general, and whether it is possible at all.
  In the case that $\Dim_KV=1$, it follows from the proof of Proposition \ref{lemwildandtame}.
 \end{rem}
 
 \begin{rem}\label{exab}
 In the proof of Proposition \ref{lemwildandtame} we used the fact that $G$ is abelian
 to get a $G$-invariant sub-vector space $W$ of $V$.
 In fact this assumption is necessary.
As was already observed in \cite{MR2642161},
it follows from \cite[Proposition 3.1, ii]{Eke} and \cite[Corollary 5.2]{Eke}
that there exists an $n$-dimensional $\mathbb{C}$-vector space $V$ and a finite group $G\subset GL(V)$
such that 
\[
\lim_{m\to\infty}\ \ [V^m/G]/\mathbb{L}^{mn}\neq 1 \in \hat{\mathcal{M}}_{\mathbb{C}},
\]
where $\hat{\mathcal{M}}_{\mathbb{C}}$ is the completion of $\mathcal{M}_{\mathbb{C}}$ by the dimension fibration.
So in particular there exists an $m$ large enough such that $\mathbb{L}^{mn}\neq [V^ m/G]\in K_0(\Var_{\mathbb{C}})$.

In fact there are explicit $V$ and $G$ for which this holds, which can be found in \cite{MR751131}. 
The group in Saltman's example is a $p$-group, hence in particular solvable.
Hence Proposition \ref{lemeh} will not hold in the case of solvable groups. 
 \end{rem}
 
 \begin{rem}\label{exHE} 
 If one does not assume that $k$ has enough roots of unity,
  Proposition~\ref{lemeh} does not hold.
 There is namely
 a concrete counterexample, see \cite[Example~1.2]{MR2642161},
 of a $K=\mathbb{Q}(\sqrt{-1})$-vector space $V$ with $\mathbb{Q}$-linear
 $\mathbb{Z}/4\mathbb{Z}$-action lifting the action of the Galois group of $K$ over $\mathbb{Q}$,
 such that $[V/G]$ is not equal to $\mathbb{L}^{\Dim_KV}$ in
 $K_0(\Var_{\mathbb{Q}})$.
 \end{rem}

 \section{Quotients of equivariant affine bundles}
 \label{quotients of affine bundles}
 \noindent
Now we use the result from the previous section to compute the class of the quotient of an
equivariant affine bundle by an affine action
in a (modified) Grothendieck ring of varieties.
Take $S$ with an action of $G$ as before.
For a point $s\in S/G$,
denote by $F_s$ its residue field,
and by $G_s\subset G$ the stabilizer of a
point $s'\in S$
lying in the inverse image of $s$ under the quotient map $S\to S/G$.
A priory, $G_s$ depends on the choice of $s'$.
But as all stabilizers of an orbit are conjugated,
$\lvert G_s\rvert$ does not depend on $s'$.
 
 \begin{lem}\label{lemma}
 Let $G$ be a finite abelian group,
and let $\varphi:V\rightarrow B$ be
a $G$-equivariant affine bundle of rank $d$ with affine $G$-action in the category $(\Sch_{S,G})$.
Assume that the residue field $F_s$ of every point $s \in  S/G$
contains all $\lvert G_s \rvert$-th roots of unity.
Then
\[
[V/G]=\mathbb{L}_{S/G}^d[B/G]\in K_0^{\text{mod}}(\Var_{S/G}).
\]
If the action of $G$ on $S$ is tame, we get
\[
[V/G]=\mathbb{L}_{S/G}^d[B/G]\in K_0(\Var_{S/G}).
\]
 \end{lem}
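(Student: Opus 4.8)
The plan is to reduce the global statement about the affine bundle $\varphi : V \to B$ to a fibrewise computation over $B/G$, where the fibres are exactly the quotients of vector spaces by quasi-linear actions handled by Proposition~\ref{lemwildandtame}, and then to globalize by spreading out, exactly as done in the proof of that proposition.

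First I would apply Lemma~\ref{lemma vezel} to the $G$-equivariant morphism $\varphi_G : V/G \to B/G$. Fix a point $x \in B/G$ with residue field $\kappa_x$, pick a point $b \in B$ over $x$ with stabilizer $G_b$; since $G$ is abelian, $G_b \trianglelefteq G$, so Lemma~\ref{lemma vezel} gives $[\varphi_G^{-1}(x)] = [\varphi^{-1}(b)/G_b] \in K_0^{\text{mod}}(\Var_{\kappa_x})$ (and in $K_0(\Var_{\kappa_x})$ in the tame case, by Remark~\ref{rem fiber tame}). Now $b$ is a fixed point of $G_b$, so by Remark~\ref{trivialtorsor} (special torsors) and Remark~\ref{Vb translation}, the fibre $\varphi^{-1}(b) = V_b$ is an affine space $\mathbb{A}^d_{\kappa_b}$ on which $G_b$ acts through $g_V(v) = g_E(v) + g_V(0)$ with $g_E$ quasi-linear (Remark~\ref{Vb}). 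Writing $\kappa_b$ for the residue field of $b$ and $k := \kappa_b^{G_b}$, this is precisely the situation of Proposition~\ref{lemwildandtame}: a finite abelian group $G_b$, a Galois extension $\kappa_b/k$ with group a quotient of $G_b$, and a $k$-linear action on a $d$-dimensional $\kappa_b$-vector space (after translating a chosen origin, or more simply observing the action is already of the affine shape allowed in Equation~(\ref{translation})). The hypothesis that $F_s$ contains all $\lvert G_s\rvert$-th roots of unity, together with the fact that $\kappa_b$ is purely inseparable over $\kappa_x$ and $k = \kappa_x$ up to such an extension, ensures $k$ contains all $q$-th roots of unity where $q$ is the prime-to-$p$ part of $\lvert G_b\rvert$. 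Hence $[\varphi^{-1}(b)/G_b] = \mathbb{L}_k^d$ in $K_0^{\text{mod}}(\Var_k)$, and composing with the universal homeomorphism $\mathbb{A}^d_{\kappa_b}$-vs-$\mathbb{A}^d_{\kappa_x}$ argument used in the proof of Proposition~\ref{lemwildandtame} gives $[\varphi_G^{-1}(x)] = \mathbb{L}_{\kappa_x}^d \in K_0^{\text{mod}}(\Var_{\kappa_x})$.

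Having the constant fibre class $\mathbb{L}^d$ at every point of $B/G$, I would globalize by Noetherian induction and spreading out, mimicking the end of the proof of Proposition~\ref{lemwildandtame}. Stratify $B/G$: over a generic point $\eta$ of a component, use the fibre computation at $\eta$ together with the spreading-out isomorphism of Lemma~\ref{spreading out} to find a nonempty open $U \subset B/G$ with $[\varphi_G^{-1}(U)] = \mathbb{L}_U^d$ in $K_0^{\text{mod}}(\Var_U)$, hence $\mathbb{L}_{S/G}^d[U]$ in $K_0^{\text{mod}}(\Var_{S/G})$ via the forgetful map along $U \to S/G$. Then proceed on the closed complement $B/G \setminus U$ by Noetherian induction, using the scissors relation in $K_0^{\text{mod}}(\Var_{S/G})$ and $[V/G] = [\varphi_G^{-1}(B/G)]$, to conclude $[V/G] = \mathbb{L}_{S/G}^d [B/G]$. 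In the tame case the same argument runs with $K_0$ in place of $K_0^{\text{mod}}$: Lemma~\ref{rem tame decomposition} and Remark~\ref{rem fiber tame} give the fibre identity in $K_0(\Var_{\kappa_x})$, the tame version of Proposition~\ref{lemwildandtame} holds in $K_0(\Var_k)$ (as noted after Proposition~\ref{lemeh}), spreading out for $K_0$ is \cite[Proposition~2.9]{MR2770561}, and Lemma~\ref{rem tame decomposition} shows the relevant universal homeomorphisms are piecewise isomorphisms.

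\textbf{The main obstacle} is the passage from the abstract fibre $\varphi^{-1}(b)$ of an affine bundle to a genuine $\kappa_b$-vector space with a quasi-linear $G_b$-action of the precise form required by Proposition~\ref{lemwildandtame}. One must use that $b$ is $G_b$-fixed to invoke the special-torsor triviality and the translation formula of Remark~\ref{Vb translation}, check that the resulting affine action (linear part quasi-linear over $G_b \to \Gal(\kappa_b/k)$, plus a translation vector) is exactly the shape Equation~(\ref{translation}) is set up to accept, and correctly track the field of invariants $k = \kappa_b^{G_b}$ versus the residue field $\kappa_x$ through the purely inseparable extension so that the root-of-unity hypothesis transfers. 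A secondary care point is that $\kappa_b$ is not the residue field of $x$ but only purely inseparable over it, so the fibre class lands in $K_0^{\text{mod}}(\Var_{\kappa_x})$ only after the universal-homeomorphism identification of affine spaces over these two fields — which is precisely why the wild statement must be phrased in the modified Grothendieck ring.
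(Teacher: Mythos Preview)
Your approach is exactly the paper's: reduce to fibres via Lemma~\ref{lemma vezel}, identify the fibre $V_b$ with an affine $G_b$-representation via Remarks~\ref{Vb} and~\ref{Vb translation}, apply Proposition~\ref{lemwildandtame}, then spread out by Noetherian induction. Two points need tightening.

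First, the parenthetical ``or more simply observing the action is already of the affine shape allowed in Equation~(\ref{translation})'' is not correct. Equation~(\ref{translation}) requires the tame generators $\beta_l$ to act diagonally \emph{without translation}, but the affine-bundle description $g_V(v)=g_E(v)+g_V(0)$ gives translation parts for all generators. The paper eliminates the tame translations by invoking Remark~\ref{tame translation}: the subgroup $H\subset G_b$ of elements of order prime to $p$ has a fixed point in $V_b$, and recentering at that point kills the $w_l$. You need this step; ``translating a chosen origin'' only works once you know such a common fixed point exists for the tame part.

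Second, your field bookkeeping is slightly off. It is not $\kappa_b$ that is purely inseparable over $\kappa_x$; rather $\tilde k:=\kappa_b^{G_b}$ is purely inseparable over $\kappa_x$, while $\kappa_b/\tilde k$ is Galois with group a quotient of $G_b$. For the roots-of-unity transfer you should argue that $G_b\subset G_s$ (since the structure map $B\to S$ is $G$-equivariant, the stabilizer of $b$ is contained in that of its image $s'$), so $\lvert G_b\rvert$ divides $\lvert G_s\rvert$, and hence $F_s\subset\kappa_x\subset\tilde k$ already contains the needed roots of unity.
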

 
 \begin{proof}
Let $\varphi:V\to B$  be as in the claim, and let $\varphi_G:V/G\to B/G$ be the induced map between the quotients.
 Let $x\in B/G$ be a point with residue field $k$.
 As $B/G$ is an $S/G$-scheme, $x$ lies over a point $s\in S/G$,
 and $k$ contains the residue field
$F_s$ of $s$.
 Let $b\in B$ be a point mapped to $x$ under the quotient map,
 let $K$ be the residue field of $b$ and let $G_b$ be the stabilizer of $b$ under the action of $G$.
 Without loss of generality we may assume that $b$ is mapped to $s'$ under
the structure map $B\to S$.
Hence as this map is $G$-equivariant,
$G_s$ is a subgroup of $G_b$.
As $G$ is abelian, $G_b\subset G$ is a normal subgroup, and hence
 Lemma~\ref{lemma vezel} implies that
\[
 [\varphi_G^{-1}(x)]=[\varphi^{-1}(b)/G_b]\in K_0^{\text{mod}}(\Var_k).
\]
If $G$ acts tamely on $S$,
we get this equation also in $K_0(\Var_k)$, see Remark \ref{rem fiber tame}.

As $\varphi:V\to B$ is a $G$-equivariant affine bundle of rank $d$ with affine $G$-action,
and $G_b$ is a subgroup of $G$,
the induced action of $G_b$ makes it into a $G_b$-equivariant affine bundle of rank $d$ with affine $G_b$-action.
As $G_b$ is the stabilizer of $b$, $b$ is fixed under the action of $G_b$.
Hence
by Remark \ref{Vb translation},
$V_b:=\varphi^{-1}(b)$ is a $K$-vector space of rank $d$,
and the $G_b$-action on $V$ restricts to $V_b$.
If the characteristic of $k$ and hence of $F_s$ is $p$,
this action is generated by automorphisms
$A_l,B_l\in \Aut(V_b)$ such that the order of the $A_l$ is a power of $p$
and the order of the $B_l$ divides $\lvert G_b\rvert$ and is prime to $p$.
As $G_b$ is a subgroup of $G_s$,
this means in particular that the order of $B_l$ divides $\lvert G_s\rvert$.
If the characteristic of $k$ is zero,
we may assume that the action is only generated by $B_l$s as above.
Also from Remark~\ref{Vb translation}
we get that for all $v\in V_b$ we have that
$A_l(v)=A_l'(v)+ u_l$  and $B_l(v)=B_l'(v)+w_l$ for some $u_l,w_l \in V_b$, 
such that $A_l',B_l'$ are quasi-linear maps of $V_b$.
As $G_b\subset G$ is abelian, we may assume by Remark \ref{tame translation} that $w_l=0$.

Note that $\tilde{k}:=K^{G_b}$
is a field extension of $k$ and hence of $F_s$.
By Remark \ref{Vb},
the $A_l'$ and the $B_l'$ define a $\tilde{k}$-linear action on a $K$-vector space
$V_b$
lifting the Galois action of $\Gal(K,\tilde{k})$ on K.
By assumption $F_s$ contains all $\lvert G_s\rvert$-th roots of unity, hence the same holds for $\tilde{k}$.
So using this and that $G_b$ is abelian, we can find as in the proof of Proposition \ref{lemwildandtame} a $K$-basis
such that the $A_l'$ have upper triangle form with only $1$ on the diagonal,
and the $B_l'$ are diagonal with $q_s$-th roots of unity as eigenvalues.
All together we may assume that $V_b\cong \Spec(K[x_1, \dots, x_d])$ as schemes, and that
the $G$-action on $V_b$ is given by
$\alpha_l,\beta_l \in \Aut(K[x_1,\dots,x_d])$ 
such that the $\alpha_l\rvert_K$ and $\beta_l\rvert_K$ generate the $\Gal(K,\tilde{k})$-action on $K$, and
\[
  \alpha_l(x_i)=x_i+\sum_{j<i}a_{lij}x_j+a_{li} \text { and } \beta_l(x_i)=\mu_{li}x_i
\]
for some $a_{lij},a_{l_i}\in K $ and some $q$-th roots of unity $\mu_{li}$.
But this is exactly the setting
for which we showed Proposition \ref{lemwildandtame}, see Equation (\ref{translation}).
Hence we get that
\begin{equation*}
 [\varphi^{-1}(b)/G_b]=[V_b/G_b]=\mathbb{L}_{\tilde{k}}^d\in K_0^{\text{mod}}(\Var_{\tilde{k}}).
\end{equation*}
As $G_b$ is the stabilizer of $b\in B$ under the action of $G$,
and $x$ is the image of $b$ under $\pi$,
by \cite[Capitre V.2, Th\'eor\`eme 2]{MR782297}
$\tilde{k}$ is a purely inseparable extension of $k$.
Hence
$\mathbb{L}_{\tilde{k}}=\mathbb{L}_k\in K_0^{\text{mod}}(\Var_k)$.
Putting everything together we get for every point $x\in B/G$ with residue field $k$ that
\begin{align}\label{punten2}
[\varphi_G^ {-1}(x)]=\mathbb{L}_k^d\in K_0^{\text{mod}}(\Var_{k}).
\end{align}
If the action of $G$ on $S$ is tame, we have that the characteristic of $F_s$ is prime to the order of $G_s$, and hence the characteristic of $k$
is prime to the order of $G_b$.
Thus by \cite[Capitre V.2, Proposition 5 and Corollaire]{MR782297}
$\tilde{k}=k$.
As the $A_l$ are trivial in this case, the $G$-action on $V_b$ is actually quasi-linear,
and hence by Proposition~\ref{lemeh}
we get that
\begin{align} \label{punten2a}
[\varphi_G^ {-1}(x)]=[\varphi^{-1}(b)/G_b]=\mathbb{L}_k^d\in K_0(\Var_{k}).
\end{align}
Note that, without loss of generality, we may assume that $B/G$ is integral in the claim.
This is true,
because we can decompose $B/G$ in irreducible schemes using the scissors relation.
This relation also implies that we can consider the underlying reduced scheme structure.
Let $\eta\in B/G$ be the generic point with residue field $\kappa_\eta$.
By Lemma \ref{spreading out} there is an isomorphism
\[
\lim_{\stackrel{\longleftarrow}{\kappa_\eta \in U \subset B/G}}\!\!\!\!
K_0^{\text{mod}}(\Var_U)\to K_0^{\text{mod}}(\Var_{\kappa_\eta}).
\]
This implies using Equation (\ref{punten2}) for $\eta$ that we can find an open $U\subset B/G$ such that
\[
[ \varphi_G^ {-1}(U)]=\mathbb{L}^ d_{U}\in K_0^{\text{mod}}(\Var_{U}).
\]
As the separated map $U\to S/G$ induces a forgetful map from $K_0^{\text{mod}}(\Var_U)$ to $K_0^{\text{mod}}(\Var_{S/G})$,
we get that $[\varphi_G^ {-1}(U)]=\mathbb{L}^ d_{S/G}[U]\in K_0^{\text{mod}}(\Var_{S/G})$.
Proceed with a generic point of $B/G\setminus U$.
By Noetherian induction we get, using the scissors relation in the modified Grothendieck ring,
that 
\[
[V/G]=\mathbb{L}^ d_{S/G}[{B/G}]\in K_0^{\text{mod}}(\Var_{S/G}).
\]
If the action of $G$ on $S$ is tame,
we can use that by \cite[Proposition 2.9]{MR2770561}
\[
\lim_{\stackrel{\longleftarrow}{\kappa_\eta \in U \subset B/G}}\!\!\!\!
K_0(\Var_U)\to K_0(\Var_{\kappa_\eta}).
\]
is an isomorphism, and Equation (\ref{punten2a})
to conclude with an analog argument as above that
\[
[V/G]=\mathbb{L}^ d_{S/G}[{B/G}]\in K_0(\Var_{S/G}).
\]
\end{proof}

\begin{ex}\label{extrivial}
 Take a $B\in (\Sch_{S,G})$,
 and let $\mathbb{A}^d_B\to B$ be
the trivial vector bundle with $G$-action induced  by the action on $B$.

As $\mathbb{A}_{S/G}^d$ is flat over $S/G$,
it also follows directly from  \cite[Expos\'e V, Proposition~1.9]{MR0238860} that
$\mathbb{A}^d_B/G\cong (\A^d_{S/G}\times_{S/G}B)/G \cong \A^d_{S/G}\times_{S/G} B/G$,
thus
\[
[\mathbb{A}^d_B/G]=\mathbb{L}^d_{S/G}[{B/G}]\in K_0(\Var_{S/G}).
\]
\end{ex}

\section{The quotient map on the equivariant Grothendieck ring of varieties}
\label{quotient map}

\noindent
In \cite[Lemma 3.2]{MR2106970}
it was shown that, if $G$ acts freely on a
variety $S$ of characteristic zero,
taking the quotient defines a map from $K_0^G(\Var_S)$ to $K_0(\Var_{S/G})$.
We will now show that such a map exists for more general $S$ and $G$.
To make sure that the quotient $X/G$ of a separated $S$-scheme of finite type $X$
with good $G$-action is a separated $S/G$-scheme of finite type,
we assume, as before, that $S/G$ is locally Noetherian and separated,
and that the quotient map $\pi_S:S \to S/G$ is finite.
For a point $s\in S/G$, we denote again by $F_s$ the residue field of $s$ and by $ G_s\subset G$
the stabilizer of a point $s'\in \pi ^{-1}(s)\subset S$.

Proving that the quotient map is well defined,
the main problem is to show that the second relation in Definition \ref{equivariant Gring} does not cause troubles,
hence we have to control quotients of $G$-equivariant affine bundles by affine $G$-actions.
In the proof of \cite[Lemma 3.2]{MR2106970} it is shown that, if the action of $G$ on $S$ is free,
the quotient of such a bundle is again an affine bundle.
Without the freeness assumption the quotient can be even singular,
hence in particular it is no an affine bundle in general.
But we have seen in Lemma \ref{lemma}  that in the modified Grothendieck ring,
or in the usual Grothendieck ring in the case of tame actions,
the class of the quotient of a
$G$-equivariant affine bundle only depends on its rank and its base.
This enables us to show the following theorem.
 
 \begin{thm}\label{quotientmap}
 Let $G$ be a finite abelian group.
 Assume that the residue field $F_s$ of any point $s\in S/G$ contains all $\lvert G_s\rvert $-th roots of unity.
 Then there is a well defined group homomorphism
 \[
 K^G_0(\Var_S)\to K_0^{\text{mod}}(\Var_{S/G})
 \]
 sending $[X]\in K_0^G(\Var_S)$ to $[X/G]\in K_0^{\text{mod}}(\Var_{S/G})$ for every $X\in (\Sch_{S,G})$.
 If the $G$-action on $S$ is tame,
 it factors through a group homomorphism
  \[
 K^G_0(\Var_S)\to K_0(\Var_{S/G}).
 \]
 \end{thm}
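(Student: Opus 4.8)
The plan is to construct the desired homomorphism by checking that the assignment $[X]\mapsto[X/G]$ respects the two defining relations of $K_0^G(\Var_S)$, namely the scissors relation and the affine-bundle relation, and then to verify that it is additive (a group homomorphism). First I would note that taking the quotient is functorial on isomorphisms, so the assignment is well defined on isomorphism classes of objects of $(\Sch_{S,G})$; moreover each $X/G$ is a separated $S/G$-scheme of finite type by the standing assumptions on $S/G$ and the finiteness of $S\to S/G$, so the target classes make sense. The map on the free abelian group on isomorphism classes is then the obvious one sending $\sum n_i[X_i]$ to $\sum n_i[X_i/G]$; the whole content is that it kills the subgroup generated by the two families of relations.

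For the scissors relation, take $X\in(\Sch_{S,G})$ and a closed $G$-invariant subscheme $Y\subset X$ with open complement $U=X\setminus Y$. As recalled in Remark \ref{rem decomposing}, the quotient map $\pi\colon X\to X/G$ restricts to an isomorphism $U/G\xrightarrow{\sim}\pi(U)$ by \cite[Expos\'e V, Corollaire 1.5]{MR0238860}, and $\pi(U)$ is open in $X/G$ with closed complement $\pi(Y)$. By Lemma \ref{closed mod} there is a universal homeomorphism $Y/G\to\pi(Y)$, hence $[Y/G]=[\pi(Y)]$ in $K_0^{\text{mod}}(\Var_{S/G})$; combining this with the scissors relation $[X/G]=[\pi(Y)]+[\pi(U)]$ in $K_0^{\text{mod}}(\Var_{S/G})$ gives $[X/G]=[Y/G]+[U/G]$, which is exactly what is needed. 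In the tame case one invokes Lemma \ref{rem tame decomposition} instead of Lemma \ref{closed mod}, so the same identity already holds in $K_0(\Var_{S/G})$.

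For the affine-bundle relation, let $V\to B$ and $W\to B$ be two $G$-equivariant affine bundles of rank $d$ over the same base $B\in(\Sch_{S,G})$, each with affine $G$-action. This is precisely the situation of Lemma \ref{lemma}: the hypothesis that $F_s$ contains all $\lvert G_s\rvert$-th roots of unity for every $s\in S/G$ is in force, so Lemma \ref{lemma} gives $[V/G]=\mathbb{L}_{S/G}^d[B/G]=[W/G]$ in $K_0^{\text{mod}}(\Var_{S/G})$, and in $K_0(\Var_{S/G})$ when the action on $S$ is tame. Thus both relations are respected, and we obtain a well-defined group homomorphism $K_0^G(\Var_S)\to K_0^{\text{mod}}(\Var_{S/G})$ (respectively $\to K_0(\Var_{S/G})$ in the tame case); additivity is automatic since the map is defined on the free group by linear extension. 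For the final factorization claim in the tame case, one simply observes that by Lemma \ref{rem tame decomposition} and the tame part of Lemma \ref{lemma} every relation is already killed in $K_0(\Var_{S/G})$, so the homomorphism to $K_0^{\text{mod}}(\Var_{S/G})$ lifts (indeed is the composite of the map to $K_0(\Var_{S/G})$ with the canonical projection). The main obstacle here is not in this bookkeeping but is already absorbed into Lemma \ref{lemma} and hence ultimately into Proposition \ref{lemwildandtame}: controlling the quotient of an affine bundle by a possibly wild affine action, for which one is forced into the modified Grothendieck ring because the fiberwise purely inseparable comparison maps of Lemma \ref{closed mod} and Lemma \ref{lemma vezel} need not be piecewise isomorphisms (Example \ref{exinsep3}).
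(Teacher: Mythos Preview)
Your proposal is correct and follows essentially the same route as the paper's own proof: verify that $X/G$ is a legitimate object over $S/G$, handle the scissors relation via Lemma~\ref{closed mod} (respectively Lemma~\ref{rem tame decomposition} in the tame case) together with the open-quotient identification $U/G\cong\pi(U)$, and handle the affine-bundle relation by a direct appeal to Lemma~\ref{lemma}. Your added commentary on where the real work is hidden (in Lemma~\ref{lemma} and Proposition~\ref{lemwildandtame}) is accurate but not part of the paper's proof.
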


 \begin{proof} 
 For all $X\in (\Sch_{S,G})$,
 the quotient $X/G$ exists by \cite[Expos\'e V, Corollaire 1.4]{MR0238860},
 because the $G$-action on $X$ is assumed to be good.
 As the structure map $X\to S$ is $G$-equivariant,
 $X/G$ is an $S/G$-scheme.
 As $S/G$ is locally Noetherian and $S\to S/G$ is finite and hence of finite type,
 by \cite[Corollaire 1.5]{MR0238860},
 $X/G$ is of finite type over $S/G$.
 Moreover, the fact that $X$ is separated over $S/G$ implies
 the same for $X/G$.
 Hence we get a well defined map from $(\Sch_{S,G})$ to $K_0^{\text{mod}}(\Var_{S/G})$ and $K_0(\Var_{S/G})$, respectively.
So in order to show the proposition, we need to show that this map factors through $K^ G_0(\Var_S)$.
Hence we need to show that Relation (1) and Relation (2) from Definition \ref{equivariant Gring} still hold after taking the quotient.

Take any $X\in (\Sch_{S,G})$ and a closed subscheme $Y$ of $X$,
closed with respect to the action of $G$.
Set $U:=X\setminus Y$.
Denote by $\pi:X\to X/G$ the quotient map.
As $U \subset X$ is open and $G$-invariant,
by \cite[Expos\'e V, Corollaire 1.4]{MR0238860}, $\pi(U)\cong U/G$ and open in $X/G$.
By Lemma \ref{closed mod}, $[\pi(Y)]=[Y/G]\in K_0^{\text{mod}}(\Var_{S/G})$.
Hence we get, using the scissors relation in $K_0^{\text{mod}}(\Var_{S/G})$, that
\begin{equation}\label{eqrel1}
  [X/G]  
=[\pi(Y)]+[\pi(U)]
 =[Y/G]+[U/G] \in K_0^{\text{mod}}(\Var_{S/G}).
\end{equation}
If the action of $G$ on $S$, and hence also the action of $G$ on $X$ is tame,
then by Lemma \ref{rem tame decomposition} we get Equation~(\ref{eqrel1}) also in $K_0(\Var_{S/G})$.

It remains to show that Relation (2) still holds after taking the quotient.
Take any $B\in (\Sch_{S,G})$, and
let $V\rightarrow B$ and $W\to B$
be $G$-equivariant affine bundle of rank $d$ with affine $G$-action.
By Lemma \ref{lemma} we have that
\begin{equation}\label{eqrel2}
 [V/G]=\mathbb{L}^d_{S/G}[{B/G}]=[W/G]\in K_0^{\text{mod}}(\Var_{S/G}),
\end{equation}
 and if the action of $G$ on $S$ is tame,
 Equation (\ref{eqrel2}) holds in $K_0(\Var_S)$.
Altogether, taking the quotients gives us well defined maps as in the claim.
\end{proof}
 
 \begin{rem}
 By Remark \ref{rem decomposing}
 we only get a well defined quotient map with values in the modified Grothendieck
 ring of varieties in the case of a wild action on $S$,
even if we can show Lemma \ref{lemma}
 in the usual Grothendieck ring of varieties.
\end{rem}

\begin{rem} \label{necessary}
Let $S=\Spec(k)$ with trivial $G$-action for some group $G$.
Assume that there exists a finite Galois extension $K/k$ and a $d$-dimensional $K$-vector space
$V$ with a $k$-linear $G$-action lifting the Galois action,
such that $[V/G]\neq \mathbb{L}_k^{d}$ in $K_0^{\text{mod}}(\Var_k)$.
Note that $V\to \Spec(K)$ is an affine bundle of rank $d$ with affine $G$-action in the category $(\Sch_{S,G})$,
and the same holds for $\mathbb{A}^ {d}_K\to \Spec(K)$
with a $G$-action induced by the Galois action on $K$.
We have $\mathbb{A}^d_K/G\cong\mathbb{A}^d_k$, see Example \ref{extrivial}.
By definition $[V]=[\mathbb{A}^{d}_K]\in K_0^ G(\Var_k)$,
but $[V/G]\neq[\mathbb{A}^{d}_K/G]\in K_0^ G(\Var_k)$,
so there cannot be a well defined map from $K_0^ G(\Var_k)$ to $K_0^{\text{mod}}(\Var_k)$ as in
Theorem~\ref{quotientmap}.
Hence Remark \ref{exab} and Remark \ref{exHE} show that Theorem \ref{quotientmap} in general
does not hold without the assumption on the residue field $F_s$ for all $s\in S/G$ or for a non-abelian group $G$.
\end{rem}

  \begin{cor} \label{corM}
Notation and assumptions as in Theorem \ref{quotientmap}.
 Then there is a well defined group homomorphism
 \[
  \mathcal{M}_S^G\to \mathcal{M}^{\text{mod}}_{S/G}
 \]
sending $\mathbb{L}_S^{-s}[X]$ to  $\mathbb{L}_{S/G}^{-s}[X/{G}]$
for all $X\in (\Sch_{S,G})$.
If the action of $G$ on $S$ is tame,
we also get a well defined group homomorphism
$ \mathcal{M}_S^G\to \mathcal{M}_{S/G}$
 with this property.
 \end{cor}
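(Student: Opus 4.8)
The plan is to obtain Corollary~\ref{corM} formally from Theorem~\ref{quotientmap} and Example~\ref{extrivial}. The one point that needs care is that the quotient map $\Phi\colon K_0^G(\Var_S)\to K_0^{\text{mod}}(\Var_{S/G})$ produced by Theorem~\ref{quotientmap} is only a group homomorphism, not a ring homomorphism (the fibre product does not commute with forming quotients), so one cannot invoke the universal property of a ring localisation; instead the extension to the $\mathbb{L}$-inverted rings has to be set up as a colimit of abelian groups.

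First I would verify that $\Phi$ intertwines multiplication by $\mathbb{L}_S$ with multiplication by $\mathbb{L}_{S/G}$: for $X\in(\Sch_{S,G})$ we have $\mathbb{L}_S[X]=[\mathbb{A}^1_S\times_SX]$ in $K_0^G(\Var_S)$, where $\mathbb{A}^1_S\times_SX$ is the trivial line bundle over $X$ with $G$-action induced from that on $X$, so Example~\ref{extrivial} (with $d=1$, $B=X$) gives $\Phi(\mathbb{L}_S[X])=[(\mathbb{A}^1_S\times_SX)/G]=\mathbb{L}_{S/G}[X/G]=\mathbb{L}_{S/G}\,\Phi([X])$. Since $\Phi\circ(\,\cdot\,\mathbb{L}_S)$ and $(\,\cdot\,\mathbb{L}_{S/G})\circ\Phi$ are group homomorphisms agreeing on every generator $[X]$, they coincide, i.e.\ $\Phi(\mathbb{L}_Sz)=\mathbb{L}_{S/G}\,\Phi(z)$ for all $z\in K_0^G(\Var_S)$.

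Next I would write $\mathcal{M}_S^G=K_0^G(\Var_S)[\mathbb{L}_S^{-1}]$ as the colimit, in abelian groups, of the system $K_0^G(\Var_S)\xrightarrow{\cdot\mathbb{L}_S}K_0^G(\Var_S)\xrightarrow{\cdot\mathbb{L}_S}\cdots$, and on the $n$-th term put $\Phi_n:=\mathbb{L}_{S/G}^{-n}\Phi\colon K_0^G(\Var_S)\to\mathcal{M}^{\text{mod}}_{S/G}$, which makes sense since $\mathbb{L}_{S/G}$ is invertible in $\mathcal{M}^{\text{mod}}_{S/G}$. The intertwining identity gives $\Phi_{n+1}(\mathbb{L}_Sz)=\mathbb{L}_{S/G}^{-(n+1)}\mathbb{L}_{S/G}\,\Phi(z)=\mathbb{L}_{S/G}^{-n}\Phi(z)=\Phi_n(z)$, so the $\Phi_n$ form a cocone over the system, and the universal property of the colimit yields a group homomorphism $\mathcal{M}_S^G\to\mathcal{M}^{\text{mod}}_{S/G}$. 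Tracing the element $\mathbb{L}_S^{-s}[X]$, which is the image of $[X]$ coming from the $s$-th term of the colimit, shows it is sent to $\mathbb{L}_{S/G}^{-s}\Phi([X])=\mathbb{L}_{S/G}^{-s}[X/G]$, which is exactly the asserted formula. For the tame statement I would run the identical argument with $K_0(\Var_{S/G})$ and $\mathcal{M}_{S/G}$ in place of the modified rings, using the tame halves of Theorem~\ref{quotientmap} and of Example~\ref{extrivial}.

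I do not expect a genuine obstacle here: everything reduces to the intertwining relation $\Phi(\mathbb{L}_Sz)=\mathbb{L}_{S/G}\,\Phi(z)$, which is immediate from Example~\ref{extrivial}, and once that is in hand the construction is purely formal. The only thing one must be careful not to do is to treat $\Phi$ as a ring homomorphism and try to localise rings directly; organising the extension as a colimit of abelian groups sidesteps this cleanly.
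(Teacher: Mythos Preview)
Your proposal is correct and follows essentially the same approach as the paper: both reduce the extension to the localisation to the single intertwining identity $\Phi(\mathbb{L}_S[X])=\mathbb{L}_{S/G}[X/G]$, proved via the flatness of $\mathbb{A}^1_{S/G}$ over $S/G$ (the paper re-derives this in place, you cite Example~\ref{extrivial}, which is the same computation). Your write-up is more explicit about the colimit-of-abelian-groups mechanism for passing to the localisation, whereas the paper simply states the compatibility equation~(\ref{xmall}) and leaves the formal extension implicit, but the content is the same.
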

 
 \begin{proof}
Using Theorem \ref{quotientmap} it suffices to show that for all $j\in \mathbb{Z}$ 
\begin{equation}\label{xmall}
 \mathbb{L}^{-j}_{S/G}[X/G]=\mathbb{L}_{S/G}^{-(j+1)}[(\mathbb{A}^1_{S}\times_S X)/G] \in \mathcal{M}_{S/G}
\end{equation}
and hence in $\mathcal{M}^{\text{mod}}_{S/G}$
for all $X\in (\Sch_{S,G})$.
Note that the fiber product of $X$ and $\mathbb{A}^1_S$ is taken in the category $(\Sch_{S,G})$.
Note that   
  $\mathbb{A}^1_{S}\times_SX =\mathbb{A}^1_{S/G}\times_{S/G}X $,
  and the action on $\mathbb{A}^ 1_{S/G}$ is trivial.
   As $\mathbb{A}^1_{S/G}$ is flat over $S/G$,
   \cite[Proposition 1.9]{MR0238860} implies that 
   $(\mathbb{A}^1_{S/G}\times_{S/G}X)/G=\mathbb{A}^1_{S/G}\times_{S/G}X/G$.
   Hence Equation (\ref{xmall}) holds.
\end{proof}

\noindent
  Consider now a profinite group
  \[
  \hat{G}=\lim_{\stackrel{\longleftarrow}{i\in I}} G_i.
  \]
 Let $S$ be a separated scheme with a $\hat{G}$-action factorizing through a good action of a finite quotient $G_j$ of $\hat{G}$.
 Assume that the quotient $S/\hat{G}=S/G_j$ is a separated, locally Noetherian scheme,
 and that the quotient map $\pi_S: S\to S/\hat{G}$ is finite.
 For all $s\in S$ and $i\geq j$, set $q_{si}:=\lvert G_{si}\rvert $,
 with $G_{si}\subset G_i$ the stabilizer of a point $s'\in \pi_s^{-1}(s)\subset S$
 under the action of the $G_i$.

 \begin{cor}\label{corprofinite}
 Let $\hat{G}$ be a profinite abelian group.
 Assume that the residue field $F_S$ of any point $s\in S/\hat{G}$ contains all $q_{si}$-th roots of unity for all $i\geq j$.
 Then there are well defined group homomorphisms
 \[
  K_0^{\hat{G}}(\Var_S)\to K_0^{\text{mod}}(\Var_{S/\hat{G}})\text{ and } \mathcal{M}_S^{\hat{G}} \to \mathcal{M}^{\text{mod}}_{S/\hat{G}}
 \]
sending the class of a separated scheme $X$
to the class of its quotient $X/\hat{G}$.
If the action of $\hat{G}$ on $S$ is tame, we get well defined group homomorphisms
 \[
  K_0^{\hat{G}}(\Var_S)\to K_0(\Var_{S/\hat{G}})\text{ and } \mathcal{M}_S^{\hat{G}} \to \mathcal{M}_{S/\hat{G}}
 \]
 with this property.
 \end{cor}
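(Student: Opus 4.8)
The plan is to reduce everything to Theorem~\ref{quotientmap} and Corollary~\ref{corM} applied to the finite groups $G_i$ and then pass to the colimit. Recall that $K_0^{\hat G}(\Var_S)=\varinjlim_{i\in I}K_0^{G_i}(\Var_S)$ and $\mathcal{M}_S^{\hat G}=\varinjlim_{i\in I}\mathcal{M}_S^{G_i}$; since $I$ is directed, the subsystem $\{i\in I:i\geq j\}$ is cofinal, so it suffices to construct compatible homomorphisms out of the terms with $i\geq j$. Fix such an $i$. The structure map $\hat G\to G_j$ factors as $\hat G\to G_i\to G_j$, so the $\hat G$-action on $S$ — which factors through $\hat G\to G_j$ — factors through a good action of $G_i$ on $S$, and the kernel of $G_i\to G_j$ acts trivially, whence $S/G_i=S/G_j=S/\hat G$. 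Moreover $G_i$ is finite abelian, being a finite quotient of the profinite abelian group $\hat G$, and the stabilizer $G_{si}\subset G_i$ of a point $s'$ of $S$ over $s$ has order $q_{si}$, so the hypothesis of the corollary says precisely that $F_s$ contains all $\lvert G_{si}\rvert$-th roots of unity for every $s\in S/\hat G$. Thus Theorem~\ref{quotientmap} and Corollary~\ref{corM} apply to $G=G_i$ and yield group homomorphisms
\[
\Phi_i\colon K_0^{G_i}(\Var_S)\to K_0^{\text{mod}}(\Var_{S/\hat G}),\qquad \Psi_i\colon\mathcal{M}_S^{G_i}\to\mathcal{M}^{\text{mod}}_{S/\hat G}
\]
sending $[X]$ to $[X/G_i]$, resp.\ $\mathbb{L}_S^{-s}[X]$ to $\mathbb{L}_{S/\hat G}^{-s}[X/G_i]$.

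Next I would verify that these maps are compatible with the transition maps of the directed system. For $i\leq i'$ (both $\geq j$) the transition map $K_0^{G_i}(\Var_S)\to K_0^{G_{i'}}(\Var_S)$ is the forgetful morphism of Remark~\ref{remark} attached to the surjection $G_{i'}\to G_i$: it sends the class of $X\in(\Sch_{S,G_i})$ to the class of $X$ with the $G_{i'}$-action pulled back along $G_{i'}\to G_i$. The kernel of this surjection then acts trivially on $X$, so $X/G_{i'}=X/G_i$, and hence $\Phi_{i'}$ precomposed with the transition map agrees with $\Phi_i$ on generators, hence everywhere; the identical computation handles $\Psi_i$. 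By the universal property of the filtered colimit the $\Phi_i$ glue to a group homomorphism $K_0^{\hat G}(\Var_S)\to K_0^{\text{mod}}(\Var_{S/\hat G})$ and the $\Psi_i$ to $\mathcal{M}_S^{\hat G}\to\mathcal{M}^{\text{mod}}_{S/\hat G}$; since $X/\hat G=X/G_i$ for $X\in(\Sch_{S,G_i})$ by definition of the quotient by a profinite group, these maps have the stated effect on classes.

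For the tame case one runs the same argument: if the $\hat G$-action on $S$ is tame — i.e.\ the characteristic of $F_s$ is zero or prime to $q_{si}$ for every $s\in S$ and every $i\geq j$ — then each induced $G_i$-action on $S$ is tame, so the tame part of Theorem~\ref{quotientmap} and of Corollary~\ref{corM} already places $\Phi_i$ and $\Psi_i$ in $K_0(\Var_{S/\hat G})$ and $\mathcal{M}_{S/\hat G}$, and gluing gives the two homomorphisms $K_0^{\hat G}(\Var_S)\to K_0(\Var_{S/\hat G})$ and $\mathcal{M}_S^{\hat G}\to\mathcal{M}_{S/\hat G}$. I do not expect a genuine obstacle here: all the substance sits in Theorem~\ref{quotientmap}, and the only real care needed is the bookkeeping around the identifications $S/G_i=S/G_j$ and $X/G_{i'}=X/G_i$ — both consequences of the actions factoring through $G_j$ for $i\geq j$ — together with fixing, as above, a workable definition of tameness for a profinite action.
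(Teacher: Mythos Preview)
Your proof is correct and follows essentially the same route as the paper: apply Theorem~\ref{quotientmap} (resp.\ Corollary~\ref{corM}) to each finite quotient $G_i$ with $i\geq j$, check compatibility with the transition maps of Remark~\ref{remark} via $X/G_{i'}=X/G_i$, and pass to the colimit. If anything, you are slightly more explicit than the paper in spelling out the compatibility step and in flagging that tameness for a profinite action needs to be interpreted as tameness of each induced $G_i$-action.
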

 
 \begin{proof}
 By assumption $S/\hat{G}=S/G_j=S/G_i$ for all $i\geq j$.
 Hence
by Theorem~\ref{quotientmap}
there is a well defined map $K_0^ {G_i}(\Var_S)\to K_0^{\text{mod}}(\Var_{S/G_i})=K_0^{\text{mod}}(\Var_{S/\hat{G}})$ sending $X\in (\Sch_{S,G_i})$ to its quotient
$X/G_i$ for all $i\geq j$.
Using this map we get the following commutative diagram:
\[
 \xymatrix{
\dots \ar[r] & K_0^ {G_i}(\Var_S) \ar[r] \ar[rd] & K_0^ {G_{i+1}}(\Var_S)\ar[r] \ar[d] & \dots  \\
&  & K_0^{\text{mod}}(\Var_{S/\hat{G}})  & 
 }
\]
Here the maps in the first line are given as in Remark \ref{remark}.
Hence we get an induced map
\[ \lim_{\stackrel{\longrightarrow}{i\in I}}K_0^{G_i}(\Var_S) \to
   K_0^{\text{mod}}(\Var_{S/\hat{G}})
\]
with the required property.
The tame case works analog.
The statement for $\mathcal{M}_S^{\hat{G}}$
follow with a similar argument from Corollary \ref{corM}.
 \end{proof}

\begin{rem}\label{K0-linear}
 Let $k$ be a field with trivial tame $G$-action.
 Then we can view $K_0^G(\Var_k)$ as a module over $K_0(\Var_k)$
 by mapping the class of a $k$-scheme of finite type $X$ in $K_0(\Var_k)$ to the class of $X$ with trivial $G$-action.
 It is clear that the quotient map is trivial on the image of $K_0(\Var_k)$ in $K_0^G(\Var_k)$.
 As every $k$-scheme of finite type is flat over the field $k$,
 it follows from \cite[Proposition 1.9]{MR0238860}
 that the quotient map maps the class of $X\times_k V$ in $K_0^G(\Var_k)$ to $[X]\cdot[V/G]\in K_0(\Var_k)$.
 Hence it follows that the quotient map
 \[
  K_0^G(\Var_k)\to K_0(\Var_k)
 \]
is $K_0(\Var_k)$-linear.
Similarly we get that the quotient map $\mathcal{M}_k^G\to \mathcal{M}_k$ is $\mathcal{M}_k$-linear.
Moreover we get the analog statements for a profinite group $\hat{G}$.
\end{rem}

 \section{The quotient of the motivic nearby fiber}
 \label{an application}

 \noindent
Throughout this section, 
if not mentioned otherwise,
let $k$ be a field of characteristic zero
containing all roots of unity,
let $X$ be an irreducible algebraic variety over $k$, and
let $f:X\to \mathbb{A}_k^1$ be a non-constant morphism.
We denote by $X_0\subset X$ the zero locus of $f$ in $X$,
and assume that $X_\eta:=X \times_{\mathbb{A}_k^1}\mathbb{A}_k^1\setminus \{0\}=X\setminus X_0$ is smooth.

\medskip

We are now going to use the results from the previous section
to show that the quotient of the motivic nearby fiber is a well defined invariant with values in $\mathcal{M}_{X_0}$.
The motivic nearby fiber can be attached to a map $f: X\to \A^1_k$ as above.
It was constructed in \cite{DL3}
as a limit of the motivic Zeta function,
and was investigated in more details in \cite{MR2106970}.
Moreover we show that modulo $\mathbb{L}$, this quotient is equal to motivic reduction $R(f)$ in the image of $K_0(\Var_{X_0})$ in $\mathcal{M}_k$,
see Proposition~\ref{application}.
Here $R(f)$ is the class of the special fiber of a smooth modification of $f$
in  $K_{0}(\Var_{X_0})/\mathbb{L}$.
A \emph{smooth modification} of $f: X\to \mathbb{A}^1_k$ is 
a smooth irreducible algebraic variety $Y$ over $k$,
together with a proper morphism 
$h:Y\to X$ such that the restriction 
$h: Y\setminus h^ {-1}(X_0)\to X\setminus X_0$ is an isomorphism.
Due to weak factorization, the definition of the motivic reduction
does not depend on the choice of such a modification,
see Lemma \ref{welldef R(f)}.

\medskip

This result implies the following: if $f: X\to \mathbb{A}_k^1$ is proper
and $X$ is smooth, and the generic fiber $X_\eta$ of $f$ is equal to $1$ modulo $\mathbb{L}$ in $K_0(\Var_{\A_k^1\setminus \{0\}})$,
then the same holds for the special fiber of $f$ in the image of $K_0(\Var_k)$ in $\mathcal{M}_k$,
see Corollary \ref{1modL}.
This can be seen as a motivic analog of the main theorem in \cite[Theorem 1.1]{MR2247971},
which says that
if $V$ is an absolutely irreducible smooth projective variety
over a local field $K$ with finite residue field $F$ such that the $m$-th \'etale cohomology
of $V\times_K \bar{K}$, $\bar{K}$ the algebraic closure of $K$, has
coniveau $1$
for all $m\geq1$,
then the number of rational points of the special fiber of any
regular projective model of $V$ is congruent $1$ modulo $\lvert F \rvert$.

How does this analogy work?
In both cases one deduces a property of the special fiber from a property of the generic fiber of some proper and smooth scheme.
We are now going to outline the connection between the properties on the generic fibers and the properties on the special fibers, respectively.

As remarked in \cite{MR2247971}, if the characteristic of $K$
is equal to zero,
then the fact that the \'etale cohomology of $V$ has coniveau $1$
implies that the Hodge type of the de Rham cohomology
is $\geq 1$,
or equivalently that $H^q(V,\mathcal{O}_V)=0$ for all $q\geq 1$.
Now
consider the Hodge-Deligne polynomial $HD: K_0(\Var_K)\to \mathbb{Z}[u,v]$, see \cite[Example~4.1.6]{MR2885336}.
This is a ring morphism sending
the class of a projective and smooth $K$-variety $X$ to $\sum_{p,q} (-1)^{p+q} \Dim_K(H^q(X,\Omega_X^p))u^pv^q$.
We have that 
\[
HD(\mathbb{L})=HD([\mathbb{P}_K^1])-HD(1)=1+uv-1=uv.
\]
Hence, if the class of $V$ is equal to $1$ modulo $\mathbb{L}$ in $K_0(\Var_K)$,
 $HD(V)=1$ modulo $uv$, hence in particular $H^q(V,\mathcal{O}_V)=0$ for $q\geq 1$, so
the de Rham cohomology of $V$ has Hodge type $\geq 1$.

Note furthermore that if we have a finite field $F$,
and the class of a variety $V_F$ over $F$ in $K_0(\Var_F)$
is equal to $1$ modulo $\mathbb{L}$,
then $\#(V_F(F))=1$ modulo $\lvert F \rvert$.

\subsubsection*{The Motivic nearby fiber}
Recall the following notation from \cite{DL3}.
Let ${h: Y \to X}$ be an \emph{embedded resolution} of $f$, 
i.e., $h$ is a proper morphism inducing an isomorphism $Y\setminus h^{-1}(X_0)\to X\setminus X_0$, $Y$ is smooth, and $h^ {-1}(X_0)$ is a simple normal crossing divisor.
Such an embedded resolution always exists due to resolution of singularities in characteristic zero.
Denote by $E_i$, $i\in J$, the irreducible components of $h^{-1}(X_0)$.
For each $i\in J$, denote by $N_i$ the multiplicity of $E_i$ in the divisor $f\circ h$
on $Y$.
For $I\subset J$,
we consider non-singular varieties 
\[
E_I=\bigcap _{i\in I}E_i, \text{ and } E_I^o:=E_I\setminus \bigcup_{j\in J\setminus I} E_j.
\]
Note that $\bigcup_{\emptyset \neq I\subset J}E_I^o=h^{-1}(X_0)$.

Let $\mu_n\subset \mathbb{C}$ be the group of $n$-th roots of unity for all $n\in \mathbb{N}.$ 
Set ${m_I:=\text{gcd}(N_i)_{i\in I}}$.
We introduce an unramified Galois cover $\tilde{E}_I^o$ of $E_i^o$ with Galois group 
$\mu_{m_I}$ as follows.
Let $U$ be an affine Zariski open subset of $Y$,
such that, on $U$, $f\circ h=uv^{m_i}$,
with $u$ a unit on $U$ and $v$ a morphism from $U$ to $\mathbb{A}_k^1$.
Then the restriction of $\tilde{E}_I^o$ above $E_I^o\cap U$,
denoted by $\tilde{E}_I^o\cap U$, is defined as 
\[
 \{(z,y)\in \mathbb{A}_k^1\times(E_I^o\cap U)\mid z^{m_I}=u^{-1}\}.
\]
Note that $E_I^o$ can be covered by such affine open subset $U$ of $Y$.
Gluing together the $\tilde{E}_I^o\cap U$, in the obvious way,
we obtain the cover $\tilde{E}_I^o$ of $E_I^o$ which has a natural $\mu_{m_I}$-action
(obtained by multiplying the $z$-coordinate with the elements of $\mu_{m_I}$).
This $\mu_{m_I}$-action on $\tilde{E}_I^o$ induces a $\hat{\mu}=\lim \mu_i$-action on 
$\tilde{E}_I^o$ in the obvious way.
Note that by construction $\tilde{E}_I^o/\hat{\mu}\cong E_I^o$.

\begin{defn}\label{dfn mnf}
\cite[Definition 3.5.3]{DL3}
 With this notation the \emph{motivic nearby fiber} is given by
\[
 \mathcal{S}_f:=\sum_{\emptyset \neq I\subset J} (1- \mathbb{L})^{\lvert I \rvert-1}[\tilde{E}^o_{I}]\in \mathcal{M}_{X_0}^{\hat{\mu}}.
\]
This definition does not depend on the choice of $h:Y\to X$, see \cite[3.3.1]{DLLefschetz}.
\end{defn}
\noindent
As by Corollary~\ref{corprofinite},
there is a well defined map $\mathcal{M}_{k}^ {\hat{\mu}}\to \mathcal{M}_k$
sending the class of a variety with $\hat{\mu}$-action to its quotient,
the quotient of the motivic nearby fiber
 \begin{equation}\label{Sf/G}
   \mathcal{S}_f/{\hat{\mu}}:=\sum_{\emptyset \neq I\subset J} (1- \mathbb{L})^{\lvert I \rvert-1}[\tilde{E}^o_{I}/\hat{\mu}]=\sum_{\emptyset \neq I\subset J} (1- \mathbb{L})^{\lvert I \rvert-1}[{E}^o_{I}]
 \in \mathcal{M}_{X_0} 
 \end{equation} 
 is a well defined invariant of $f:X\to \mathbb{A}_k^1$.
 In particular it does not depend on the choice of $h: Y\to X$.

By \cite[Definition 8.1]{MR2106970},
there is a well defined \emph{nearby cycle morphism}
\[
 \psi : \mathcal{M}_{\A_k^1}\to \mathcal{M}_k^{\hat{\mu}}
\]
sending the class of $X\in \mathcal{M}_{\mathbb{A}_k^1}$ to $\mathcal{S}_f$ for every smooth $k$-variety $X$ with a proper map $f: X\to \A^1_k$.
Here $\mathcal{S}_f$ is the image of $\mathcal{S}_f$
under the map $\mathcal{M}_{X_0}^{\hat{\mu}}\to \mathcal{M}_k^{\hat{\mu}}$
induced  by the structure map $X_0\to \Spec(k)$.
This morphism is $\mathcal{M}_k$-linear,
and $1=[\A^1_k]\in \mathcal{M}_{\A_k^1}$ is mapped to $1=[\Spec(k)]\in \mathcal{M}_k^{\hat{\mu}}$.
Using Corollary \ref{corprofinite} we get a well defined group morphism
\[
 \bar{\psi}: \mathcal{M}_{\A^1_k}\to \mathcal{M}_k
\]
sending the class of $X$ to the class of $\mathcal{S}_f/\bar{\mu}$ for every smooth $k$-variety $X$ with a proper map $f: X\to \A^1_k$.
Again $1\in \mathcal{M}_{\A_k^1}$ is mapped to $1\in \mathcal{M}_k$, and using Remark~\ref{K0-linear}
we see that $\bar{\psi}$ is $\mathcal{M}_k$-linear, too.

By \cite[List of properties 8.4]{MR2106970} the image of every $\mathbb{A}_k^1$-scheme
supported in the point $0$ is trivial.
So we get in fact maps
\begin{equation}\label{PSI}
  \psi: \mathcal{M}_{\A_k^1\setminus \{0\}}\to \mathcal{M}_k^{\hat{\mu}}
 \text{ and }
  \bar{\psi}: \mathcal{M}_{\A^1_k\setminus \{0\}}\to \mathcal{M}_k
\end{equation}
sending a smooth variety $X_\eta$ over $\mathbb{A}_k^1\setminus \{0\}$ to $\mathcal{S}_f$ and $\mathcal{S}_f/\hat{\mu}$, respectively,
for some proper map $f:X\to \mathbb{A}_k^1$
with $X$ smooth and irreducible, and $X_\eta \cong X\times _{\mathbb{A}_k^1}\mathbb{A}_k^1\setminus \{0\}$.
This maps are also $\mathcal{M}_k$-linear and map $1$ to $1$.

\begin{rem}
Let $X$ be a smooth and proper variety over $K=k(\!(t)\!)$.
Let $\mathcal{X}$ be proper sncd-model of $X$,
i.e., $\mathcal{X}$ is a proper and flat $k[\![t]\!]$-scheme with generic fiber isomorphic to $X$
such that its special fiber $X_0$
is a simple normal crossing divisor.
By \cite[Definition~8.3]{NiSe}
the motivic volume $S(\hat{X},\hat{K})$
with values in $\mathcal{M}_k$,
which we can associate with the $t$-adic completion $\hat{X}$ of $\mathcal{X}$,
does only depend on the generic fiber of $\hat{X}$,
and thus only on $X$ and not on the model $\mathcal{X}$.
By \cite[Proposition~8.2]{NiSe}
we have that
\[
 S(\hat{X},\hat{K})=\mathbb{L}^{-m}\sum_{\emptyset \neq I\subset J} (1- \mathbb{L})^{\lvert I \rvert-1}[\tilde{E}^o_{I}]\in \mathcal{M}_k,
\]
where the $\tilde{E}_I^o$ are constructed from $X_0$
as done above.
If $\mathcal{X}$ is actually coming from some $f: X\to \mathbb{A}^1_k$
as studied above, by \cite[Theorem 9.13]{NiSe},
we have that
\[
 S(\hat{X},\hat{K})=\mathbb{L}^{-(m-1)}\mathcal{S}_f\in \mathcal{M}_k.
\]
If we could show that
actually $S(\hat{X},\hat{K})$ was well defined in $\mathcal{M}_k^{\hat{\mu}}$ (this is work in progress),
where we consider the $\tilde{E}^o_I$ with $\hat{\mu}$-action as done above,
then also
\[
 S(\hat{X},\hat{K})/\hat{\mu}=\mathbb{L}^{-m}\sum_{\emptyset \neq I\subset J} (1- \mathbb{L})^{\lvert I \rvert-1}[{E}^o_{I}]\in\mathcal{M}_k
\]
would hold by Corollary \ref{corprofinite},
and we could proof results analogous to Proposition~\ref{application} and Corollary \ref{1modL} also in this context.
\end{rem}

\subsubsection*{Connection with the motivic reduction}
To be able to define the motivic reduction of $f$, we first need to proof the following lemma.

\begin{lem}\label{welldef R(f)}
Let $h: Y \to X$ be any smooth modification of $f:X\to \mathbb{A}_k^1$.
Then  the class of $h^ {-1}(X_0)$ in $K_0(\Var_{X_0})/\mathbb{L}$ does not depend on the choice of $h$.
\end{lem}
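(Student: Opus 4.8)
The plan is to compare two smooth modifications by means of weak factorisation, reducing to the case of a single blow-up, where I will check that passing to the blow-up changes the class of the special fibre only by a multiple of $\mathbb{L}$.

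First I would take two smooth modifications $h_1\colon Y_1\to X$ and $h_2\colon Y_2\to X$ of $f$ and form the composite $\phi:=h_2^{-1}\circ h_1\colon Y_1\dashrightarrow Y_2$. This is a birational map between smooth irreducible $k$-varieties, it is a morphism over $X$, and it restricts to an isomorphism over the open subscheme $X\setminus X_0$, since both $h_i$ do. As $\Char k=0$, I would invoke weak factorisation in its relative form over the base $X$, respecting the locus where the map is already an isomorphism: $\phi$ factors as a chain
\[
Y_1=V_0\dashrightarrow V_1\dashrightarrow\cdots\dashrightarrow V_n=Y_2
\]
of smooth irreducible $k$-varieties $V_i$, each proper over $X$ via a morphism $g_i\colon V_i\to X$, where every step $V_i\dashrightarrow V_{i+1}$ is either the blow-up along a smooth irreducible centre or the inverse of such a blow-up, and every step is an isomorphism over $X\setminus X_0$. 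In particular each $g_i$ is again a smooth modification of $f$, so it suffices to prove $[g_i^{-1}(X_0)]=[g_{i+1}^{-1}(X_0)]$ in $K_0(\Var_{X_0})/\mathbb{L}$ for every $i$.

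Since the desired relation is symmetric, I may assume that $\sigma\colon V_{i+1}\to V_i$ is the blow-up along a smooth irreducible centre $Z\subset V_i$, and I write $W:=g_i^{-1}(X_0)$, so that $g_{i+1}^{-1}(X_0)=\sigma^{-1}(W)$. Because $\sigma$ is an isomorphism over $X\setminus X_0$, the centre $Z$ lies in $W$; if $c:=\Codim(Z,V_i)=1$ then $\sigma$ is an isomorphism and there is nothing to prove, so assume $c\ge 2$. The exceptional divisor $E:=\sigma^{-1}(Z)$ is the projectivisation $\mathbb{P}(N_{Z/V_i})$ of the normal bundle, hence a Zariski-locally trivial $\mathbb{P}^{c-1}$-bundle over $Z$, so that $[E]=[Z]\bigl(1+\mathbb{L}+\cdots+\mathbb{L}^{c-1}\bigr)$ in $K_0(\Var_{X_0})$, where $Z$, $E$ and $W$ are all regarded as $X_0$-schemes through the structure morphisms. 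Since $\sigma$ restricts to an isomorphism between $\sigma^{-1}(W)\setminus E$ and $W\setminus Z$, the scissors relation gives
\[
[\sigma^{-1}(W)]=[W\setminus Z]+[E]=\bigl([W]-[Z]\bigr)+[Z]\bigl(1+\mathbb{L}+\cdots+\mathbb{L}^{c-1}\bigr)=[W]+[Z]\bigl(\mathbb{L}+\cdots+\mathbb{L}^{c-1}\bigr)
\]
in $K_0(\Var_{X_0})$. The correction term $[Z]\bigl(\mathbb{L}+\cdots+\mathbb{L}^{c-1}\bigr)$ lies in $\mathbb{L}\cdot K_0(\Var_{X_0})$, so $[g_{i+1}^{-1}(X_0)]\equiv[g_i^{-1}(X_0)]$ in $K_0(\Var_{X_0})/\mathbb{L}$, and running through the whole factorisation yields $[h_1^{-1}(X_0)]=[h_2^{-1}(X_0)]$ in $K_0(\Var_{X_0})/\mathbb{L}$. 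The routine part here is this blow-up computation; the delicate point, which I expect to be the main obstacle, is the availability of weak factorisation in this relative setting over the possibly non-proper base $X$, together with the fact that it can be arranged to be an isomorphism over $X\setminus X_0$ — this is precisely what forces every blow-up centre into the special fibres $g_i^{-1}(X_0)$.
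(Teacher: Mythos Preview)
Your proof is correct and follows essentially the same route as the paper: reduce to a single blow-up via weak factorisation over $X$, then compute that the class of the special fibre changes by a multiple of $\mathbb{L}$. The only cosmetic difference is that the paper first passes to a common roof---an embedded resolution $\tilde Y$ of the fibre product $Y_1\times_X Y_2$---and then applies weak factorisation to the honest proper birational \emph{morphism} $\tilde Y\to Y_i$, whereas you apply weak factorisation directly to the birational \emph{map} $Y_1\dashrightarrow Y_2$; both are legitimate invocations of the theorem, and the blow-up computation is identical.
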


\begin{proof}
Let $h_1: Y_1\to X$, $h_2: Y_2\to X$ be two smooth modifications of $f$.
We have shown that 
\begin{equation}\label{hi}
 [h_1^{-1}(X_0)]=[h_2^{-1}(X_0)]\in K_0(\Var_{X_0})/\mathbb{L}.
\end{equation}
Consider the fiber product $Y_{12}:=Y_1\times_XY_2$.
Note that the projection maps ${p_i: Y_{12}\to Y_i}$ are proper,
and induce isomorphisms between $ Y_{12}\setminus p_i^{-1}(h_i^{-1}(X_0))$ and $Y_i\setminus h_i^{-1}(X_0)$.
As the $h_i$ are smooth modifications of $f$,
the $Y_i\setminus h_i^{-1}(X_0)$ are isomorphic to $X_\eta =X\setminus X_0$, which is smooth by assumption.

Let $b: \tilde{Y} \to Y_{12}$ be an embedded resolution of 
$f\circ h_1 \circ p_1=f\circ h_2\circ p_2: Y_{12}\to \mathbb{A}_k^1$.
Set $g_i:=p_i\circ b$.
By construction $g_1^{-1}(h_1^{-1}(X_0))=g_2^{-1}(h_2^{-1}(X_0))$.
Hence in order to show Equation (\ref{hi}),
it suffices to show that $[h_i^{-1}(X_0)]=[g_i^{-1}(h_i^{-1}(X_0))]$ in $ K_0(\Var_k)/\mathbb{L}$.
By the symmetry of the construction it suffices to show it for $i=1$.

Note that $g_1: \tilde{Y}\to Y_1$ is a proper birational morphism over $X$ between two smooth varieties.
By \cite[Remark 2.3]{MR2059227}, the Weak Factorization 
Theorem, see \cite[Theorem 0.1.1]{MR1896232},
holds for $g_1$, i.e., there exist a sequence of birational maps as follows
\[
  \xymatrix{
X_1=V_0\ar@{-->}[r]^{\phi_1}\ar@/_5mm/[rrrrr]_{g_1}& V_1\ar@{-->}[r]^{\phi_2}& V_2 \ar@{-->}[r]^{\phi_2}&\dots \ar@{-->}[r]^{\phi_{l-1}} &  V_{l-1}\ar@{-->}[r]^{\phi_{l}}& V_l=Y_i,
 }
\]
and for all $i$, either $\phi_i:V_{i-1}\dashrightarrow V_{i}$ or
$\phi_i^{-1}: V_i\dashrightarrow V_{i-1}$
is a morphism obtained by blowing up a smooth center.
By \cite[Remark 2.4]{MR2059227}, the factorization is a factorization over $X$,
i.e., there are structure maps $\varphi_i: V_i\to X$, with $\varphi_0=h_1\circ g_1$ and $\varphi_l=h_1$,
and the $\phi_i$
are maps over $X$.

Take any $i\in \{1,\dots,l\}$.
If $\phi_i: V_{i-1}\to V_i$ is a blowup in the smooth center $C_i\subset V_i$,
then we get using the scissors relation that
\begin{align*}
 [\varphi_{i-1}^ {-1}(X_0)]&=[\phi_i^{-1}(\varphi_i^{-1}(X_0))]=[\phi_i^{-1}(\varphi_i^ {-1}(X_0)\setminus C_i)]+[\phi_i^{-1}(\varphi_i^{-1}(X_0)\cap C_i)]\\
 &=[\varphi_i^ {-1}(X_0)\setminus C_i]+[\mathbb{P}_{X_0}^{\Codim_{V_i}(C_i)}][C_i\cap \varphi_i^{-1}(X_0)]\\
 &=[\varphi_i^ {-1}(X_0)\setminus C_i]+[C_i\cap \varphi_i^{-1}(X_0)]=[\varphi_i^ {-1}(X_0)] \in K_0(\Var_{X_0})/(\mathbb{L}).
\end{align*}
Analogously, we get the same statement if $\phi_i^{-1}$ is a blowup. 
Hence it follows that the classes of $g_1^ {-1}(h_1^{-1}(X_0))$ and $h_1^ {-1}(X_0)$ coincide in $K_0(\Var_k)/(\mathbb{L})$,
and hence the claim follows as observed above.
\end{proof}

\begin{defn}\label{dfn mr}
 The \emph{motivic reduction} $R(f)$ of $f: X\to \mathbb{A}_k^1$ is defined as the class of $h^{-1}(X_0)$
  in $K_0(\Var_{X_0})/\mathbb{L}$ of any smooth modification $h:Y\to X$ of $f$.
\end{defn}

\begin{nota}
Let $R\in K_0(\Var_{X_0})$ any  element in the inverse image of $R(f)$ under the quotient map.
We denote with $R(f)$ also the class of the image of $R$ in $\mathcal{M}_{X_0}$ modulo $\mathbb{L}$.

Using the forgetful map $K_0(\Var_{X_0})\to K_0(\Var_k)$
induced by the structure morphism $X_0\to \Spec(k)$,
we can view $R(f)$ also as an element in $K_0(\Var_k)/\mathbb{L}$.
We denote with $R(f)$ also its image in $K_0(\Var_k)/\mathbb{L}$.
\end{nota}

\noindent
Now we can combine the definition of the motivic reduction with the motivic nearby fiber,
and get the following proposition.

\begin{prop}\label{application} 
The class of $R(f)$ and $S_f/\hat{\mu}$ in the image of $K_0(\Var_{X_0})$ in $\mathcal{M}_{X_0}$ modulo $\mathbb{L}$ coincide.
 \end{prop}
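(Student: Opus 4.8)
The plan is to take an embedded resolution $h: Y \to X$ of $f$ and compare the two invariants term by term using the combinatorial data of the simple normal crossing divisor $h^{-1}(X_0)$. By Definition~\ref{dfn mnf} and Equation~(\ref{Sf/G}), the quotient $\mathcal{S}_f/\hat{\mu}$ equals $\sum_{\emptyset \neq I\subset J}(1-\mathbb{L})^{\lvert I\rvert -1}[E_I^o] \in \mathcal{M}_{X_0}$, while $h$ is in particular a smooth modification of $f$, so by Definition~\ref{dfn mr} the motivic reduction $R(f)$ is represented by $[h^{-1}(X_0)] = \sum_{\emptyset\neq I\subset J}[E_I^o] \in K_0(\Var_{X_0})$, using the stratification $h^{-1}(X_0) = \bigsqcup_{\emptyset\neq I\subset J}E_I^o$ and the scissors relation. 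So the claim reduces to checking that
\[
\sum_{\emptyset\neq I\subset J}(1-\mathbb{L})^{\lvert I\rvert -1}[E_I^o] \equiv \sum_{\emptyset\neq I\subset J}[E_I^o] \pmod{\mathbb{L}}
\]
in the image of $K_0(\Var_{X_0})$ in $\mathcal{M}_{X_0}$.

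The key observation is simply that $(1-\mathbb{L})^{\lvert I\rvert -1} \equiv 1 \pmod{\mathbb{L}}$ for every $I$: expanding the binomial, every term except the constant term $1$ is divisible by $\mathbb{L}$. Hence each summand on the left is congruent modulo $\mathbb{L}$ to the corresponding summand on the right, and summing over the finitely many $\emptyset\neq I\subset J$ gives the desired congruence. The only subtlety is the ambient ring in which this congruence is asserted: the statement is about the image of $K_0(\Var_{X_0})$ in $\mathcal{M}_{X_0}$ modulo $\mathbb{L}$, i.e.\ in $K_0(\Var_{X_0})/\mathbb{L}K_0(\Var_{X_0})$ pushed into $\mathcal{M}_{X_0}$, so I should be careful that $\mathcal{S}_f/\hat{\mu}$ indeed lies in this image. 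This is clear because $\mathcal{S}_f/\hat{\mu}$ is, up to the invertible-but-harmless factors $(1-\mathbb{L})^{\lvert I\rvert-1}$ (which are polynomials in $\mathbb{L}$, hence come from $K_0(\Var_{X_0})$), a $\mathbb{Z}[\mathbb{L}]$-linear combination of the classes $[E_I^o]\in K_0(\Var_{X_0})$, so no denominators are actually involved.

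Concretely, I would organize the proof as follows. First, fix an embedded resolution $h: Y\to X$ of $f$ as in the setup preceding Definition~\ref{dfn mnf}; such an $h$ exists by resolution of singularities in characteristic zero, and it is simultaneously a smooth modification of $f$, so both $R(f)$ (by Lemma~\ref{welldef R(f)}) and $\mathcal{S}_f/\hat{\mu}$ (by the discussion after Definition~\ref{dfn mnf}) may be computed from it. Second, record the two formulas: $R(f) = \bigl[\sum_{\emptyset\neq I\subset J}[E_I^o]\bigr] \in K_0(\Var_{X_0})/\mathbb{L}$ from the stratification of $h^{-1}(X_0)$, and $\mathcal{S}_f/\hat{\mu} = \sum_{\emptyset\neq I\subset J}(1-\mathbb{L})^{\lvert I\rvert-1}[E_I^o]$ from Equation~(\ref{Sf/G}). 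Third, reduce modulo $\mathbb{L}$: since $(1-\mathbb{L})^{\lvert I\rvert-1} - 1 \in \mathbb{L}\,\mathbb{Z}[\mathbb{L}]$, the difference $\mathcal{S}_f/\hat{\mu} - \sum_{\emptyset\neq I\subset J}[E_I^o]$ lies in $\mathbb{L}\cdot K_0(\Var_{X_0})$, hence vanishes in $K_0(\Var_{X_0})/\mathbb{L}$ and a fortiori in the image of $K_0(\Var_{X_0})$ in $\mathcal{M}_{X_0}$ modulo $\mathbb{L}$. This is what was claimed.

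I do not expect any serious obstacle here; the proof is essentially the elementary remark that the class of an affine line is nilpotent-like modulo itself. The only thing to be careful about is bookkeeping: making sure the formula for $\mathcal{S}_f/\hat{\mu}$ used is Equation~(\ref{Sf/G}) (where the cover has already been quotiented out to give $[E_I^o]$ instead of $[\tilde{E}_I^o]$), confirming that $\tilde{E}_I^o/\hat{\mu}\cong E_I^o$ as stated in the construction, and verifying that the identity one gets is genuinely an identity in the image of $K_0(\Var_{X_0})$ in $\mathcal{M}_{X_0}$ and not merely after further localization — which, as noted, causes no trouble because only polynomial combinations of $\mathbb{L}$ appear.
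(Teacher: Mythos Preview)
Your proposal is correct and follows essentially the same approach as the paper: both fix an embedded resolution $h$, use Equation~(\ref{Sf/G}) to express $\mathcal{S}_f/\hat{\mu}$ as $\sum_{\emptyset\neq I\subset J}(1-\mathbb{L})^{\lvert I\rvert-1}[E_I^o]$, expand the binomial to separate off the constant term $\sum_{\emptyset\neq I\subset J}[E_I^o]=[h^{-1}(X_0)]$ from an $\mathbb{L}$-multiple, and identify $[h^{-1}(X_0)]$ with $R(f)$ since $h$ is a smooth modification. Your discussion of why $\mathcal{S}_f/\hat{\mu}$ genuinely lies in the image of $K_0(\Var_{X_0})$ (only polynomial combinations of $\mathbb{L}$ appear) matches the paper's observation to the same effect.
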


\begin{proof}
Let $h: Y\to X$ be a embedded resolution of $f$, and
let $E_I^o$ be constructed from $h: Y\to X$ as done above.
Then by Equation (\ref{Sf/G}) we have
\begin{align*}
 \mathcal{S}_f/{\hat{\mu}}
& =\sum_{\emptyset \neq I\subset J}[{E}^o_{I}] + \mathbb{L} \Big( \sum_{\emptyset \neq I\subset J} \sum_{k=1}^{\lvert I \rvert-1} {\lvert I \rvert-1 \choose k}\mathbb{L}^{k-1}[E_I^o]\Big)\\
& =[h^{-1}({X}_0)]+\mathbb{L} \Big(\sum_{\emptyset \neq I\subset J} \sum_{k=1}^{\lvert I \rvert-1} {\lvert I \rvert-1 \choose k}\mathbb{L}^{k-1}[E_I^o]\Big) \in \mathcal{M}_{X_0}.
\end{align*}
One observes that $\mathcal{S}_f/{\hat{\mu}}$ lies in the image of $K_0(\Var_{X_0})$.
Hence 
 $\mathcal{S}_f/{\hat{\mu}}$ is equal to $[h^{-1}({X}_0)]$ modulo $\mathbb{L}$
 in the image of $K_0(\Var_{X_0})$ in $\mathcal{M}_{X_0}$.
This is equal to $R(f)$, because $h:Y\to X$ is an embedded resolution and hence a smooth modification 
of $f: X\to \mathbb{A}_k^1$.
\end{proof}

 \begin{cor}\label{1modL}
 Let $X$ be a smooth variety over $k$, and let ${f: X\to \mathbb{A}_k^1}$ be a proper morphism.
 If the class of $X_\eta$ is equal to $1$ modulo $\mathbb{L}$
 in $K_0(\Var_{\A^1_k\setminus \{0\}})$,
 then the class of $f^{-1}(X_0)$ is equal to $1$ modulo $\mathbb{L}$
 in the image of $K_0(\Var_k)$ in $\mathcal{M}_k$.
 \end{cor}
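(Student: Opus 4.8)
The plan is to push the hypothesis on $[X_\eta]$ through the quotient nearby cycle morphism $\bar\psi\colon\mathcal{M}_{\A^1_k\setminus\{0\}}\to\mathcal{M}_k$ of \eqref{PSI}, and then to read off the image in two different ways. On the one side, $\bar\psi$ is $\mathcal{M}_k$-linear and sends $1$ to $1$, so the output will be forced to be $1$ modulo $\mathbb{L}$. On the other side, by construction $\bar\psi([X_\eta])$ is the image of the quotient motivic nearby fiber $\mathcal{S}_f/\hat\mu$, which by Proposition~\ref{application} agrees modulo $\mathbb{L}$ with the motivic reduction $R(f)$, and I will compute $R(f)$ directly using that $X$ is already smooth and proper over $\A^1_k$.

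First I would note that, since $X$ is smooth and $f$ is proper, $[X_\eta]$ lies in the domain of $\bar\psi$ and, by the definition of $\bar\psi$ recorded after \eqref{PSI}, $\bar\psi([X_\eta])$ is the image in $\mathcal{M}_k$ of $\mathcal{S}_f/\hat\mu\in\mathcal{M}_{X_0}$ under the forgetful map induced by $X_0\to\Spec(k)$. Writing the hypothesis as $[X_\eta]=1+\mathbb{L}\cdot\alpha$ for some $\alpha\in K_0(\Var_{\A^1_k\setminus\{0\}})$, the same relation holds in $\mathcal{M}_{\A^1_k\setminus\{0\}}$; since there $\mathbb{L}$ is the image of $\mathbb{L}_k$ under the $\mathcal{M}_k$-algebra structure and $\bar\psi$ is $\mathcal{M}_k$-linear with $\bar\psi(1)=1$, one obtains $\bar\psi([X_\eta])=1+\mathbb{L}_k\cdot\bar\psi(\alpha)$, hence $\bar\psi([X_\eta])\equiv 1\pmod{\mathbb{L}}$ in $\mathcal{M}_k$.

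For the other description I would use that, since $X$ is smooth and $f$ is proper, the identity $\mathrm{id}_X\colon X\to X$ is a smooth modification of $f$ in the sense of Section~\ref{an application}, so by Lemma~\ref{welldef R(f)} and Definition~\ref{dfn mr} the motivic reduction $R(f)$ is simply the class of the special fiber $X_0$ in $K_0(\Var_{X_0})/\mathbb{L}$; its image in $K_0(\Var_k)/\mathbb{L}$, and hence in $\mathcal{M}_k$ modulo $\mathbb{L}$, is the image of $[X_0]$. By Proposition~\ref{application}, $\mathcal{S}_f/\hat\mu$ and $R(f)$ have the same reduction modulo $\mathbb{L}$ in the image of $K_0(\Var_{X_0})$ in $\mathcal{M}_{X_0}$, and pushing this equality forward along $X_0\to\Spec(k)$ — a $K_0(\Var_k)$-linear map, hence one respecting reduction modulo $\mathbb{L}$ — shows that the image of $\mathcal{S}_f/\hat\mu$ in $\mathcal{M}_k$ is congruent to the image of $[X_0]$ modulo $\mathbb{L}$. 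Chaining the two computations yields $[X_0]\equiv\bar\psi([X_\eta])\equiv 1\pmod{\mathbb{L}}$ in the image of $K_0(\Var_k)$ in $\mathcal{M}_k$, which is the assertion.

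The only point requiring care — and what I regard as the main (if minor) obstacle — is the bookkeeping of what "modulo $\mathbb{L}$" means as one passes between $K_0(\Var_{\A^1_k\setminus\{0\}})$, its localization $\mathcal{M}_{\A^1_k\setminus\{0\}}$, the ring $\mathcal{M}_{X_0}$, and $\mathcal{M}_k$: one needs that the localization morphism, the $\mathcal{M}_k$-module structure maps, $\bar\psi$, and the forgetful pushforward all carry the ideal generated by $\mathbb{L}$ into the ideal generated by $\mathbb{L}$. This holds because each is a homomorphism of $\mathcal{M}_k$- (or $K_0(\Var_k)$-) modules sending $\mathbb{L}$ to $\mathbb{L}$ in the appropriate sense, so there is no genuine geometric difficulty once Proposition~\ref{application} and the $\mathcal{M}_k$-linearity of $\bar\psi$ are in hand.
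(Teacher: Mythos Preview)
Your proof is correct and follows essentially the same approach as the paper's: write $[X_\eta]=1+\mathbb{L}\alpha$, apply the $\mathcal{M}_k$-linear map $\bar\psi$ sending $1$ to $1$ to get $\bar\psi([X_\eta])\equiv 1\pmod{\mathbb{L}}$, and identify $\bar\psi([X_\eta])=\mathcal{S}_f/\hat\mu\equiv R(f)=[X_0]$ via Proposition~\ref{application} and the observation that $\mathrm{id}_X$ is a smooth modification of $f$. Your write-up is in fact a bit more explicit than the paper's about tracking the passage between the various rings; one tiny quibble is that the properness of $f$ is needed for $\bar\psi([X_\eta])=\mathcal{S}_f/\hat\mu$, not for $\mathrm{id}_X$ being a smooth modification (which only requires $X$ smooth).
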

 
\begin{proof}
 If $[X_\eta]=1 \mod \mathbb{L}\in K_0(\Var_{\A^1_k\setminus\{0\}})$,
 we can write $[X_\eta]=1+\mathbb{L}[V]$ with $[V]\in  K_0(\Var_{\A^1_k\setminus\{0\}})$,
 also for the class of $X_\eta$ in $\mathcal{M}_{\A^1_k\setminus \{0\}}$.
 Consider the map $\bar{\psi}: \mathcal{M}_{\A^1_k\setminus \{0\}}\to \mathcal{M}_k$, see Equation (\ref{PSI}).
 
 On the one hand side,
  $\bar{\psi}([X_\eta])=1+\mathbb{L}\bar{\psi}(V)\in \mathcal{M}_k$, because
 $\bar{\psi}$ is $\mathcal{M}_k$-linear and maps $1$ to $1$.
 On the other hand,
$\bar{\psi}([X_\eta])=\mathcal{S}_f/\hat{\mu}$, and we have already seen that this is equal to $R(f)$
 in the image of $K_0(\Var_k)$ in $\mathcal{M}_k$.
Here $\mathcal{S}_f/\hat{\mu}$ and $R(f)$ 
 are elements in $K_0(\Var_k)$ via the map $K_0(\Var_{X_0})\to K_0(\Var_k)$ induced by the structure map $X_0\to \Spec(k)$.
 Moreover $R(f)=[f^{-1}(X_0)]$, because $X$ is smooth, and hence $\Id$ is a smooth modification of $f$.

All together $[f^{-1}(X_0)]$ is equal to $1$ modulo $\mathbb{L}$ in the image of $K_0(\Var_k)$ in $\mathcal{M}_k$.
\end{proof}

\begin{rem}
 It should be possible to proof Corollary \ref{1modL} without using $S_f$,
 by showing that there exists a
 well defined map
 $K_0(\Var_{\mathbb{A}_k^1})\to K_0(\Var_{k})/\mathbb{L}$, which is $K_0(\Var_k)$-linear, 
and sends $[X]$ to $R(f)$ if the structure map $f:X\to \mathbb{A}_k^1$ is proper and non-constant,
 and to $0$ if $f$ is proper and constant.
 To do this, one would need to consider the blowup relations
 in the Grothendieck ring of varieties,
 see \cite[Theorem 5.1]{MR2059227}.
\end{rem}


 \begin{rem}
 By Remark \ref{rem 0div neg},
 $K_0(\Var_{X_0})$ is not a subgroup of $\mathcal{M}_{X_0}$, hence
we cannot show Proposition \ref{application} in $K_0(\Var_k)/\mathbb{L}$.
 If we could show that the motivic nearby fiber was well defined
 in $K_0^{\hat{\mu}}(\Var_{X_0})/\mathbb{L}$, it would follow from
the fact that taking the quotient also
gives a well defined map from $K_0^{\hat{\mu}}(\Var_{X_0})/(\mathbb{L})$ to $K_0(\Var_{X_0})/(\mathbb{L})$,
that $\mathcal{S}_f/\hat{\mu}$ would be well defined in $K_0(\Var_{X_0})/(\mathbb{L})$.
Like this we could also show that $R(f)$ is well define without using the Weak Factorization Theorem.
\end{rem}

\noindent
In order to avoid the problem that $K_0(\Var_{X_0})$ is maybe not a subgroup of $\mathcal{M}_{X_0}$,
we can work in the Grothendieck ring of effective motives.
To make things easier we work over $k$ right away.

Let $\text{Mot}^{\text{eff}}_{k}$ be the additive category of effective motives with rational coefficients,
let $K_0(\text{Mot}^{\text{eff}}_{k})$ its Grothendieck ring, and let
$\mathbb{L}_{\text{mot}}$ be the class of the Lefschetz motive in this ring.
Let $\text{Mot}_{k}$ be the category of motives with rational coefficients,
let $K_0(\text{Mot}_{k})$ its Grothendieck ring, and let
$\mathbb{L}_{\text{mot}}$ be the image of the Lefschetz motive.
For the precise definitions of these objects we refer to \cite[Chapter~4]{MR2115000}.
The notation used here can be found for example in \cite[Example 2.3]{MR2885336}.
By \cite[Theorem 4.11]{MR2885336} 
we get commuting maps as follows:
\[
 \xymatrix{
 K_0(\Var_k)\ar[rr]^{\chi^{\text{eff}}_{\text{mot}}} \ar[d] & & K_0(\text{Mot}^{\text{eff}}_{k})\ar[d]^{\rho}\\
\mathcal{M}_k \ar[rr]^{\!\!\!\!\!\!\!\!\!\!\!\!\!\!\!\!\!\!\!\!\!\!\!\!\!\!\!\!\!\!\!\!\!\!\!\!\chi_{\text{mot}}} & & K_0(\text{Mot}^{\text{eff}}_{k})[\mathbb{L}_{\text{mot}}^{-1}]\cong K_0(\text{Mot}_{k})
}
\]
Here $\chi^{\text{eff}}_{\text{mot}}$
maps the class of a projective $k$-variety $X$
to the class of the effective motive $(X,\Id)$.
$\mathbb{L}$ is mapped to $\mathbb{L}_{\text{mot}}$.
By \cite[Proposition 2.7]{MR2770561}
$\rho$ is injective if one assumes the following standard conjecture,
which can be found for example in \cite[Conjecture 2.5]{MR1879805}:

\begin{conj}\label{con}
If $M$ and $N$
are objects in $\mathrm{Mot}^ {\mathrm{eff}}_k$,
then $[M]=[N]\in K_0(\mathrm{Mot}^ {\mathrm{eff}}_k)$
if and only if $M$ and $N$
are isomorphic.
\end{conj}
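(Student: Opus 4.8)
The plan is to reduce the statement to a Krull--Schmidt property for the category $\mathrm{Mot}^{\mathrm{eff}}_k$; note that only the implication ``$[M]=[N]\Rightarrow M\cong N$'' carries content, the converse being immediate. First I would rewrite the hypothesis group-theoretically: $K_0(\mathrm{Mot}^{\mathrm{eff}}_k)$ is by definition the group completion of the commutative monoid of isomorphism classes of objects under $\oplus$, so $[M]=[N]$ in $K_0(\mathrm{Mot}^{\mathrm{eff}}_k)$ holds if and only if $M$ and $N$ are \emph{stably isomorphic}, i.e.\ there is an object $T$ with $M\oplus T\cong N\oplus T$. It therefore suffices to prove that stable isomorphism implies isomorphism in $\mathrm{Mot}^{\mathrm{eff}}_k$.

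The next step is the cancellation principle: in an additive category in which every object is a finite direct sum of objects with local endomorphism rings, and such a decomposition is unique up to permutation and isomorphism of the summands, the relation $M\oplus T\cong N\oplus T$ forces $M\cong N$, by comparing for each indecomposable the number of times it occurs as a summand on the two sides. So the whole statement would follow once one knows that $\mathrm{Mot}^{\mathrm{eff}}_k$ is a Krull--Schmidt category. Since this category is pseudo-abelian (idempotents split) by construction, the only missing input is that the endomorphism ring of every motive be semiperfect; and for that it is enough that all $\Hom$-groups in $\mathrm{Mot}^{\mathrm{eff}}_k$ be finite-dimensional $\mathbb{Q}$-vector spaces, since a finite-dimensional $\mathbb{Q}$-algebra is Artinian, hence semiperfect, and the standard lemma that a pseudo-abelian $\mathbb{Q}$-linear category with finite-dimensional morphism spaces is Krull--Schmidt then applies.

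This last point is exactly where the difficulty lies, and is the reason the assertion is stated as a conjecture rather than proved. For motives modulo numerical equivalence the morphism groups are finite-dimensional and the category is in fact semisimple abelian (Jannsen), so there the statement is a theorem; the same holds for homological motives under the standard conjectures, and for Andr\'e's category of motives for motivated cycles. But for effective Chow motives with rational coefficients the morphism spaces are rational Chow groups of products of smooth projective varieties, which are in general infinite-dimensional over $\mathbb{Q}$ — already $\mathrm{CH}_0$ of a curve of positive genus over a large field (such as the base field here, which contains all roots of unity) is infinite-dimensional — so no unconditional proof of Krull--Schmidt is available for the category under consideration. Hence the realistic summary is: reduce to Krull--Schmidt via stable isomorphism and cancellation as above, and regard the finiteness of the morphism groups, equivalently the Krull--Schmidt property itself, as the main obstacle, which is precisely what keeps the statement conjectural.
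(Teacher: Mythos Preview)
You have correctly recognized that the statement is recorded in the paper as a \emph{conjecture}, not a theorem; the paper offers no proof of its own, only a citation to the literature and a remark on the known partial results. Your reduction to a Krull--Schmidt/cancellation problem is the standard and correct framing, and your identification of semiperfectness of endomorphism rings as the crucial missing ingredient is accurate.

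Where your discussion diverges from the paper's is in the proposed source of that semiperfectness. You suggest it would follow from finite-dimensionality of the $\Hom$-spaces, and correctly note that this fails for Chow motives. The paper instead points to Kimura--O'Sullivan finite-dimensionality: it cites that the conjecture is known whenever $M$ and $N$ are finite-dimensional in Kimura's sense, and that the Kimura--O'Sullivan conjecture (every effective motive is finite-dimensional) would therefore imply the full statement. This route does \emph{not} require the $\Hom$-groups to be finite-dimensional over $\mathbb{Q}$ --- Chow motives of abelian varieties, for instance, are Kimura-finite yet have infinite-dimensional Chow groups --- but rather exploits that for a Kimura-finite motive the kernel of the map from its Chow endomorphism algebra to its numerical endomorphism algebra is a nilpotent ideal; combined with Jannsen's semisimplicity theorem for numerical motives, this makes the endomorphism ring semiperfect and hence gives Krull--Schmidt. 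So your overall strategy (stable isomorphism $\Rightarrow$ cancellation via Krull--Schmidt) matches the literature, but the realistic hypothesis under which it is currently known to succeed is Kimura finiteness rather than finiteness of $\Hom$, and that is the gap the paper flags.
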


\noindent
As shown in \cite[Proposition 4.4]{MR3058610}, Conjecture \ref{con} holds if M an N are supposed to be finite dimensional.
An important conjecture by Kimura and O'Sullivan predicts that all the motives
$M\in \text{Mot}^{\text{eff}}_k$ are finite dimensional. See \cite[Conjecture 2.7]{MR2167204}, or \cite[Conjecture KS, page 390]{MR3058610} for this very precise formulation.

Assume now that Conjecture \ref{con} is true.
Hence $\rho$ is injective,
and as $\mathcal{S}_f/\hat{\mu}$ lies in the image of $K_0(\Var_k)$ in $\mathcal{M}_k$,
the inverse image of $\chi_{\text{mot}}(\mathcal{S}_f/\hat{\mu})$ under $\rho$
has precisely one element,
which we denote by $\mathcal{S}_f/\hat{\mu}_{\text{mot}}$.
Set $R(f)_{\text{mot}}:=\chi_{\text{mot}}^{\text{eff}}(R(f))$.
Proposition \ref{application} then implies the following:

\begin{cor}\label{application motives}
$\mathcal{S}_f/\hat{\mu}_{\text{mot}}$ and $R(f)_{\text{mot}}$
coincide in $ K_0(\text{Mot}^{\text{eff}}_{k})$.
\end{cor}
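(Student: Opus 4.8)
The plan is to deduce the corollary formally from Proposition~\ref{application} by transporting its equality through the realization maps of the commutative square preceding the statement; the essential input is that Conjecture~\ref{con}, which we assume, makes $\rho$ injective, and hence identifies $\mathcal{S}_f/\hat{\mu}_{\text{mot}}$ with the class in $K_0(\text{Mot}^{\text{eff}}_k)$ of the effective motive attached to \emph{any} class of $k$-varieties representing $\mathcal{S}_f/\hat{\mu}\in\mathcal{M}_k$.

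First I would fix an embedded resolution $h\colon Y\to X$ of $f$. Pushing forward along $X_0\to\Spec(k)$, equation~\eqref{Sf/G} reads $\mathcal{S}_f/\hat{\mu}=\sum_{\emptyset\neq I\subset J}(1-\mathbb{L})^{\lvert I\rvert-1}[E_I^o]$ in $\mathcal{M}_k$; the right-hand side is a $\mathbb{Z}[\mathbb{L}]$-combination of classes of $k$-varieties, so $\sigma:=\sum_{\emptyset\neq I\subset J}(1-\mathbb{L})^{\lvert I\rvert-1}[E_I^o]\in K_0(\Var_k)$ is a lift of $\mathcal{S}_f/\hat{\mu}$ along the localization map $\iota\colon K_0(\Var_k)\to\mathcal{M}_k$. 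Since $h$ is a smooth modification of $f$ and $h^{-1}(X_0)=\bigcup_{\emptyset\neq I}E_I^o$, the class $R:=[h^{-1}(X_0)]=\sum_{\emptyset\neq I}[E_I^o]\in K_0(\Var_k)$ represents $R(f)$, and $\sigma-R=\sum_{\emptyset\neq I}\bigl((1-\mathbb{L})^{\lvert I\rvert-1}-1\bigr)[E_I^o]=\mathbb{L}\,\eta$ for some $\eta\in K_0(\Var_k)$.

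Next I would apply $\chi^{\text{eff}}_{\text{mot}}$ and use the commutativity $\rho\circ\chi^{\text{eff}}_{\text{mot}}=\chi_{\text{mot}}\circ\iota$: since $\iota(\sigma)=\mathcal{S}_f/\hat{\mu}$, we get $\rho\bigl(\chi^{\text{eff}}_{\text{mot}}(\sigma)\bigr)=\chi_{\text{mot}}(\mathcal{S}_f/\hat{\mu})$, so by injectivity of $\rho$ the element $\chi^{\text{eff}}_{\text{mot}}(\sigma)$ is its unique $\rho$-preimage, i.e.\ $\mathcal{S}_f/\hat{\mu}_{\text{mot}}=\chi^{\text{eff}}_{\text{mot}}(\sigma)$; likewise $R(f)_{\text{mot}}=\chi^{\text{eff}}_{\text{mot}}(R)$. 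Applying the ring homomorphism $\chi^{\text{eff}}_{\text{mot}}$, which sends $\mathbb{L}$ to $\mathbb{L}_{\text{mot}}$, to $\sigma=R+\mathbb{L}\,\eta$ gives
\[
\mathcal{S}_f/\hat{\mu}_{\text{mot}}=R(f)_{\text{mot}}+\mathbb{L}_{\text{mot}}\,\chi^{\text{eff}}_{\text{mot}}(\eta),
\]
so the two classes coincide modulo $\mathbb{L}_{\text{mot}}$, which is the assertion.

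I expect no geometric difficulty here; the only point that needs care is matching the two ways in which $\mathcal{S}_f/\hat{\mu}_{\text{mot}}$ is described. A priori it is defined only as the unique $\rho$-preimage of $\chi_{\text{mot}}(\mathcal{S}_f/\hat{\mu})$, and one needs $\rho$ injective both for this to make sense and in order to conclude that $\chi^{\text{eff}}_{\text{mot}}$ applied to \emph{any} variety-class lift of $\mathcal{S}_f/\hat{\mu}$ returns it, because $\rho\circ\chi^{\text{eff}}_{\text{mot}}=\chi_{\text{mot}}\circ\iota$ forces $\chi^{\text{eff}}_{\text{mot}}(\ker\iota)\subseteq\ker\rho=0$ — this is exactly where the non-injectivity of $\iota$ (Remark~\ref{rem 0div neg}) would otherwise be an obstruction. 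One should also note that $R(f)_{\text{mot}}=\chi^{\text{eff}}_{\text{mot}}(R(f))$ is, for the same reason that $R(f)$ only lives in $K_0(\Var_k)/\mathbb{L}$ (Lemma~\ref{welldef R(f)}), well defined only modulo $\mathbb{L}_{\text{mot}}$, so the natural home of the equality is $K_0(\text{Mot}^{\text{eff}}_k)/\mathbb{L}_{\text{mot}}$.
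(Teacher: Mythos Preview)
Your proof is correct and follows the approach the paper has in mind: the paper gives no explicit argument for the corollary beyond saying that Proposition~\ref{application} implies it, and you have spelled out exactly the formal diagram chase through the commutative square using the injectivity of $\rho$. Your observation that the equality naturally lives in $K_0(\mathrm{Mot}^{\mathrm{eff}}_k)/\mathbb{L}_{\mathrm{mot}}$, since $R(f)_{\mathrm{mot}}$ is only defined there, is a valid clarification of the statement.
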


\section*{Acknowledgments}
\noindent
During the research for this article, I was supported by a research fellowship
 of the \textbf{DFG} (Aktenzeichen HA 7122/1-1).
I thank H\'el\`ene Esnault for discussing her result with me.
Moreover, I am very thankful to Johannes Nicaise for the numerous discussions we had, for the ideas he gave me and for the suggestions he made. 

 \bibliographystyle{babalpha}

	\bibliography{refegr}

\end{document}